\newtheorem {theorem}{Theorem}
\newtheorem {corollary}{Corollary}
\newtheorem {definition}{Definition}
\newtheorem {lemma}{Lemma}
\newtheorem {proposition}{Proposition}
\begin{document}
\title[Hyperbolicity and shadowing lemma]{Characterization of hyperbolicity
and generalized shadowing lemma}
\author{Davor Dragi\v{c}evi\'{c} and Sini\v{s}a Slijep\v{c}evi\'{c}}
\address{Department of Mathematics, University of Zagreb, Bijenicka 30,
Zagreb, Croatia}
\email{ddragicevic@math.uniri.hr, slijepce@math.hr}
\date{June 17, 2011}
\subjclass[2000]{ 37D20, 37D25}
\keywords{Hyperbolicity, Lyapunov exponents, shadowing,\ Anosov maps}

\begin{abstract}
J. Mather characterized uniform hyperbolicity of a discrete dynamical system
as equivalent to invertibility of an operator on the set of all sequences
bounded in norm in the tangent bundle of an orbit. We develop a similar
characterization of nonuniform hyperbolicity and show that it is equivalent
to invertibility of the same operator on a larger, Fr\'{e}chet space. We
apply it to obtain a condition for a diffeomorphism on the boundary of the
set of Anosov diffeomorphisms to be nonuniformly hyperbolic. Finally we
generalise the Shadowing lemma in the same context.
\end{abstract}

\maketitle

\section{Introduction}

Assume $f$ is a $C^{1}$diffeomorphism on a finite dimensional Riemannian
smooth manifold $M$. Recall that a compact $f$-invariant set $\Lambda
\subseteq M$ is uniformly hyperbolic if for each $x\in \Lambda $ there exist
a decomposition $T_{x}M=E^{s}(x)\oplus E^{u}(x)$ and constants $c>0$ and $%
0<\lambda <1$, such that for each $x\in \Lambda $, $%
Df(x)E^{s}(x)=E^{s}(f(x)) $ and $Df(x)E^{u}(x)=E^{u}(f(x))$,%
\begin{eqnarray*}
|Df^{k}(x)\eta | &\leq &c\lambda ^{k}|\eta |\text{ whenever }\eta \in
E^{s}(x)\text{ and }k>0,\text{and} \\
|Df^{-k}(x)\eta | &\leq &c\lambda ^{k}|\eta |\text{ whenever }\eta \in
E^{u}(x)\text{ and }k>0.
\end{eqnarray*}

Recall that a $f$-invariant set $\Lambda $ is (nonuniformly)\ hyperbolic, if
each point of $\Lambda $ has non-zero Lyapunov exponents. An ergodic measure 
$\mu $ is hyperbolic, if $\mu $-a.e. point has non-zero Lyapunov exponents (%
\cite{Barreira:01}, \cite{Barreira:02}). Definitions are recalled in more
detail in Section 3.

We study here different characterizations of uniform and nonuniform
hyperbolicity, inspired by a characterization of J. Mather (\cite{Lanford90}%
, \cite{Mather68}), closely related to a concept referred to in the physics
literature as Thouless formula (\cite{Thouless:72}). We first introduce the
notation. We will always assume $M={{{{\mathbb{R}}}}}^{d}/{{{{\mathbb{Z}}}}}%
^{d}$ is a $d$ dimensional torus with the canonical Riemannian metric, for
simplicity of notation and clarity of arguments. All results are valid for
arbitrary finite dimensional smooth Riemannian manifolds, and can be easily
generalized by choosing appropriate local charts. The set $M^{{{{{\mathbb{Z}}%
}}}}$ will be the set of all sequences $\boldsymbol{x}{{{\mathbf{=(}}}}%
x_{k})_{k\in {{{{\mathbb{Z}}}}}}$, $x_{k}\in M$. A tangent space $T_{x}M$
will be naturally identified with ${{{{\mathbb{R}}}}}^{d}$, and the space $({%
{{{\mathbb{R}}}}}^{d})^{{{{{\mathbb{Z}}}}}}$ contains the subset of all
tangent orbits $\eta _{n+1}=Df(x_{n})\eta _{n}$.

Assume $X_{\infty }=l_{\infty }({{{{\mathbb{R}}}}}^{d})$ is the Banach space
of sequences ${{{\mathbf{{\eta }=(}}}}\eta _{k})_{k\in {{{{\mathbb{Z}}}}}}$, 
$\eta _{k}\in {{{{\mathbb{R}}}}}^{d}$ satisfying $\sup_{k\in {{{{\mathbb{Z}}}%
}}}|\eta _{k}|<\infty $, where $|\cdot |$ denotes the Euclidean norm on ${{{{%
\mathbb{R}}}}}^{d}$, and $||.||_{\infty }$ is the $\sup $ norm on $X_{\infty
}$. For any ${{{\mathbf{{x}\in }}}}M{{{\mathbf{^{{{{{\mathbb{Z}}}}}}}}}}$
(not necessarily an orbit) we define the operator $\Gamma _{{{{\mathbf{x}}}}%
}\colon X_{\infty }\rightarrow X_{\infty }$ by: 
\begin{equation}
(\Gamma _{{{{\mathbf{x}}}}}{{{\mathbf{{\eta })}}}}_{k}{{{\mathbf{=}}}}\eta
_{k}-Df(x_{k-1})\eta _{k-1}{{{\text{\textbf{, }}}}}k{{{\mathbf{\in }{{{%
\mathbb{Z}}}}\text{.}}}}  \label{d:gamma}
\end{equation}

It is easy to check that $\Gamma _{{{{\mathbf{x}}}}}$ is a well defined,
bounded linear operator on $X_{\infty }$.

We will denote by $o(x)\in M{{{\mathbf{^{{{{{\mathbb{Z}}}}}}}}}}$ the orbit
of $x\in M$. The operator $\Gamma _{o(x)}$ measures how much $\mathbf{\eta }$
differs from a tangent orbit. Assume $\Lambda \in M$ is a closed invariant
set. J. Mather proved the following characterization of uniform
hyperbolicity:\ $\Lambda $ is uniformly hyperbolic if and only if for each $%
x\in \Lambda $, $\Gamma _{o(x{{{\mathbf{)}}}}}$ has a continuous inverse
with the norm uniformly bounded in $\Lambda $. Mather's characterization is
actually in terms of operators on sections of the tangent bundle (\cite%
{Hasselblatt:02}, \cite{Mather68}). Here we use the equivalent setting from 
\cite{Lanford90}.

We consider this characterization of uniform hyperbolicity to be
particularly elegant and inspiring. For example, one can relatively easily
obtain all classical results of the theory of uniform hyperbolicity such as
shadowing, existence of stable and unstable manifolds and structural
stability, by an appropriate application of inverse or implicit function
theorems on Banach spaces, using the operator $\Gamma _{{{{\mathbf{x}}}}}$ (%
\cite{Lanford90}, \cite{Mather68}, \cite{MacKay95}).

We will need here a more general family of norms $||.||_{n}$, $1\leq n\leq
\infty $, on subsets of $({{{{\mathbb{R}}}}}^{d})^{{{{{\mathbb{Z}}}}}}$,
defined as 
\begin{equation}
||\mathbf{\eta }||_{n}=\sup_{k\in {{{{\mathbb{Z}}}}}}\func{exp}(-|k|/n)|\eta
_{k}|{{{\text{.}}}}  \label{d:normn}
\end{equation}%
Let $X_{n}$ be the set of all $\mathbf{\eta }$\textbf{\ }$\in ({{{{\mathbb{R}%
}}}}^{d})^{{{{{\mathbb{Z}}}}}}$ such that $||{{{\mathbf{\eta }}}}%
||_{n}<\infty $. We will also continue to write $X_{\infty }$ instead of the
usual notation $l_{\infty }({{{{\mathbb{R}}}}}^{d})$. As $M$ is compact, it
is easy to check that $\Gamma _{{{{\mathbf{x}}}}}$ is a well-defined,
bounded linear operator on $X_{n}$ for any positive integer $n$ and any $%
\mathbf{x}{{\mathbf{\in }}}M{{{\mathbf{^{{{{{\mathbb{Z}}}}}}}}}}$.

We will first somewhat extend the Mather's characterization of uniform
hyperbolicity to norms $||\boldsymbol{.}||_{n}$ in order to put the later
results in the right context.

\begin{theorem}
\label{t:uh}Assume $f$ is a diffeomorphism on $M$ and $\Lambda \subseteq M$
is a closed invariant set. Then $\Lambda $ is uniformly hyperbolic if and
only if for some $n_{0}$, $1\leq n_{0}\leq \infty $ and for each $x\in
\Lambda $, $\Gamma _{o(x{{{\mathbf{)}}}}}$ has a continuous inverse in $%
X_{n_{0}}$ such that $||\Gamma _{o(x{{{\mathbf{)}}}}}^{-1}||_{n_{0}}$ is
bounded uniformly in $\Lambda $.

Furthermore, if this holds for some $n_{0}$, then it holds for all\textbf{\ }%
sufficiently large $n$.
\end{theorem}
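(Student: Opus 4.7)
The plan is to reduce everything to Mather's characterization---the theorem in the special case $n_0=\infty$---by proving that uniform invertibility of the family $\{\Gamma_{o(x)} : x \in \Lambda\}$ on some $X_{n_0}$ with $n_0 < \infty$ is equivalent to uniform invertibility on $X_\infty$. Combined with Mather's theorem, this equivalence yields both the characterization and the ``furthermore'' clause in one stroke.

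To compare the two settings I would work with the isometric isomorphism $S_n : X_n \to X_\infty$ defined by $(S_n \mathbf{\eta})_k = \exp(-|k|/n)\,\eta_k$ and with the conjugated operator $\tilde{\Gamma}_n := S_n \Gamma_{\mathbf{x}} S_n^{-1}$ acting on $X_\infty$. A direct calculation gives
\[
(\tilde{\Gamma}_n \mathbf{\eta})_k = \eta_k - \alpha_k(n)\, Df(x_{k-1}) \eta_{k-1},
\]
where $\alpha_k(n) = e^{-1/n}$ for $k \geq 1$ and $\alpha_k(n) = e^{1/n}$ for $k \leq 0$. Hence
\[
\|\tilde{\Gamma}_n - \Gamma_{\mathbf{x}}\|_\infty \leq (e^{1/n} - 1)\sup_{x \in M} \|Df(x)\|,
\]
a bound tending to zero as $n \to \infty$ and independent of $\mathbf{x}$. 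Assuming uniform hyperbolicity, Mather's theorem provides a uniform constant $K$ with $\|\Gamma_{o(x)}^{-1}\|_\infty \leq K$ for all $x \in \Lambda$, and a Neumann-series perturbation then yields uniform invertibility of $\tilde{\Gamma}_n$, hence of $\Gamma_{o(x)}$ on $X_n$, as soon as $(e^{1/n}-1)(\sup\|Df\|)K < 1$. This settles the ``only if'' direction and the final clause simultaneously.

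For the converse I would use a shift argument. Given $\mathbf{\eta} \in X_\infty \subseteq X_{n_0}$, set $\mathbf{\xi} := \Gamma_{o(x)}^{-1}\mathbf{\eta} \in X_{n_0}$. For each $m \in \mathbb{Z}$ the shifted sequences $\xi^{(m)}_k := \xi_{k+m}$ and $\eta^{(m)}_k := \eta_{k+m}$ satisfy $\Gamma_{o(f^m x)} \mathbf{\xi}^{(m)} = \mathbf{\eta}^{(m)}$, and because $f^m x \in \Lambda$ the hypothesized uniform bound $C$ on $\|\Gamma_{o(\cdot)}^{-1}\|_{n_0}$ yields $\|\mathbf{\xi}^{(m)}\|_{n_0} \leq C\|\mathbf{\eta}^{(m)}\|_{n_0} \leq C\|\mathbf{\eta}\|_\infty$. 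Evaluating the defining supremum at $k=0$ gives $|\xi_m| \leq C\|\mathbf{\eta}\|_\infty$ for every $m$, so $\mathbf{\xi} \in X_\infty$ with $\|\mathbf{\xi}\|_\infty \leq C\|\mathbf{\eta}\|_\infty$. Thus $\Gamma_{o(x)}$ is uniformly invertible on $X_\infty$, and Mather's theorem delivers uniform hyperbolicity.

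I expect the main obstacle to be bookkeeping rather than conceptual: in the perturbation step one must check that the Neumann bound on $\|\tilde{\Gamma}_n^{-1}\|$ depends only on $\sup\|Df\|$ and on the Mather constant, so that the uniform-in-$\Lambda$ property survives the conjugation back to $\Gamma_{o(x)}$ on $X_n$. The shift argument in the converse direction is essentially automatic once the setup is fixed.
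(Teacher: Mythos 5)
Your proof is correct, but it takes a genuinely different route from the paper's, and it is worth spelling out how. The paper proves the forward direction (uniform hyperbolicity $\Rightarrow$ uniform invertibility on $X_{n}$) directly, in Proposition~\ref{p:uh1}, by constructing the inverse explicitly from the stable/unstable splitting and summing geometric series in a general class of weighted spaces $X_{\boldsymbol{w}}$ satisfying (\ref{r:n0}); this direct construction is not a detour, since the explicit bound (\ref{r:h6}) is reused quantitatively later (e.g. in (\ref{r:relB}) in the proof of Theorem~\ref{t:anosov}). You instead treat Mather's $X_\infty$ characterization as a black box and transfer invertibility from $X_\infty$ to $X_n$ by the isometric conjugation $S_n$ plus a Neumann-series perturbation, which is conceptually tidier but gives less explicit control of the constant. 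For the converse, the paper's Lemma~\ref{l:uh3} uses the matrix representation $(\Upsilon_{i,j})$ of the inverse (via Lemma~\ref{l:matrix}) and the relation $\Upsilon^{(k)} = S^{-k}\Upsilon S^{k}$ to deduce exponential off-diagonal decay and hence $X_\infty$-boundedness; your shift argument reaches the same conclusion more elementarily by evaluating the $\|\cdot\|_{n_0}$-norm of the shifted solution at the index $k=0$, giving $|\xi_m|\leq C\|\boldsymbol{\eta}\|_\infty$ directly and avoiding the matrix machinery entirely. Both arguments hinge on the same structural fact, namely that $S^{k}o(x)=o(f^{k}(x))$ stays inside $\Lambda$, so the uniform bound propagates along the orbit. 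The one thing your proof cannot replace is the matrix representation lemma itself, which the paper needs again in the shadowing section (Proposition~\ref{p:delta}); but for Theorem~\ref{t:uh} alone, your shorter argument is sound.
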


This is discussed and proved in Section 2.

Now let ${{{{\mathcal{N}}}}}=\bigcap_{n=1}^{\infty }X_{n}$ be the Fr\'{e}%
chet space of sequences in $({{{{\mathbb{R}}}}}^{d})^{{{{{\mathbb{Z}}}}}}$
growing sub-expontentially in norm. We then show that we can use the same
language and characterize nonuniform hyperbolicity, and prove the following
in Section 3:

\begin{theorem}
\label{t:nuh}Assume $f$ is a diffeomorphism on $M$, and $\mu $ an ergodic
measure. Then $\mu $ is (nonuniformly) hyperbolic if and only if for $\mu $%
-a.e. $x\in M$, $\Gamma _{o(x{{{\mathbf{)}}}}}$ has a continuous inverse in $%
{{{{\mathcal{N}}}}}$.
\end{theorem}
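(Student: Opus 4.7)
The plan is to exploit the Oseledets multiplicative ergodic theorem in both directions: a zero Lyapunov exponent will manufacture a non-trivial element in the kernel of $\Gamma_{o(x)}$, while a hyperbolic splitting will let us build the inverse explicitly as a sum of stable and unstable ``Green's-function'' contributions. Continuity of $\Gamma_{o(x)}$ on $\mathcal{N}$ is routine (compactness of $M$ bounds $\|Df\|$), and the Fr\'{e}chet open mapping theorem will upgrade bijectivity to continuity of the inverse for free.

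For the easy implication, suppose $\mu$ is not hyperbolic. By ergodicity the Lyapunov spectrum is a.e.\ constant, so at $\mu$-a.e.\ Oseledets-regular $x$ there is a subspace $E^{0}(x)\neq\{0\}$ on which the exponent vanishes in both time directions. For any $\eta_{0}\in E^{0}(x)\setminus\{0\}$ the tangent orbit $\eta_{k}:=Df^{k}(x)\eta_{0}$ satisfies $|\eta_{k}|\leq K_{\varepsilon}e^{\varepsilon|k|}$ for every $\varepsilon>0$ by two-sided regularity, so $\eta\in X_{n}$ for every $n$ and hence $\eta\in\mathcal{N}$. Since $\Gamma_{o(x)}\eta=0$, this contradicts invertibility.

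For the main implication, assume $\mu$ is hyperbolic. Pesin theory yields, for $\mu$-a.e.\ $x$, a $Df$-invariant splitting $T_{x_{k}}M=E^{s}(x_{k})\oplus E^{u}(x_{k})$ along $x_{k}=f^{k}x$, a uniform spectral gap $\lambda>0$, and for every small $\varepsilon>0$ a tempered function $C_{\varepsilon}$ with
\[ |Df^{n}(x_{k})\xi|\leq C_{\varepsilon}(x_{k})e^{(-\lambda+\varepsilon)n}|\xi|\text{ for }\xi\in E^{s}(x_{k}),\ n\geq 0, \]
\[ |Df^{-n}(x_{k})\xi|\leq C_{\varepsilon}(x_{k})e^{(-\lambda+\varepsilon)n}|\xi|\text{ for }\xi\in E^{u}(x_{k}),\ n\geq 0, \]
and $\|\pi^{s}(x_{k})\|,\|\pi^{u}(x_{k})\|\leq C_{\varepsilon}(x_{k})\leq C_{\varepsilon}(x)e^{\varepsilon|k|}$. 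Given $v\in\mathcal{N}$, set
\[ \eta_{k}=\sum_{j\leq k}Df^{k-j}(x_{j})\pi^{s}(x_{j})v_{j}-\sum_{j>k}Df^{-(j-k)}(x_{j})\pi^{u}(x_{j})v_{j}. \]
A telescoping computation yields $\Gamma_{o(x)}\eta=v$; injectivity is handled exactly as in the easy direction, since any $\eta\in\ker\Gamma_{o(x)}$ has $\eta_{k}=Df^{k}(x)\eta_{0}$, and a non-zero unstable (resp.\ stable) component of $\eta_{0}$ forces $|\eta_{k}|$ to grow exponentially as $k\to+\infty$ (resp.\ $k\to-\infty$), ruling out membership in $\mathcal{N}$.

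The main obstacle is verifying that the formal series for $\eta$ actually belongs to $\mathcal{N}$. Substituting the tempered bounds and the Fr\'{e}chet estimate $|v_{j}|\leq M_{n}e^{|j|/n}$, each summand is controlled by $C_{\varepsilon}(x)^{2}e^{O(\varepsilon)|j|}e^{-(\lambda-\varepsilon)|k-j|}e^{|j|/n}$. Splitting the summation by the signs of $k$ and $j$ and choosing $\varepsilon$ small and $n$ large so that the cumulative sub-exponential rate stays strictly below $\lambda$, the hyperbolic decay $e^{-(\lambda-\varepsilon)|k-j|}$ dominates, and one obtains $|\eta_{k}|\leq Ce^{|k|/m}$ for any prescribed $m$. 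Freedom in the choice of $\varepsilon$ and $n$ then places $\eta$ in every $X_{m}$, hence in $\mathcal{N}$, and the Fr\'{e}chet open mapping theorem finally supplies continuity of $\Gamma_{o(x)}^{-1}$.
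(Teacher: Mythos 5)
Your proof is correct, and the forward (hyperbolic~$\Rightarrow$~invertible) direction is essentially the paper's: the Green's-function formula you write down for $\eta_k$ is, after the change of index $j=k-l$ (resp.\ $j=k+l$), identical to the sums $\eta_k^s=\sum_{l\ge 0}Df^l(x_{k-l})\mu_{k-l}^s$ and $\eta_k^u=-\sum_{l\ge 1}Df^{-l}(x_{k+l})\mu_{k+l}^u$ used in the paper's Lemma~\ref{l:nm}, and your estimate, using Pesin's tempered bound $C_\varepsilon(x_k)\le C_\varepsilon(x)e^{\varepsilon|k|}$, is the same computation as the paper's display (\ref{r:bigsum}) with the same choice ``take $n>m$ and $\varepsilon$ so small that the residual rate $2\varepsilon+1/n-1/m$ is nonpositive and $2\varepsilon+1/n<\log(1/\lambda)$.''

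Where you diverge, modestly but genuinely, is in two places. First, for the backward direction you prove the contrapositive directly: a zero Oseledets exponent manufactures a nonzero, sub-exponentially growing tangent orbit, hence a nontrivial kernel vector of $\Gamma_{o(x)}$ in $\mathcal{N}$. The paper instead routes through an intermediate notion, ``$u$-hyperbolicity'': it extracts subspaces $E^s$, $E^u$ from the inverse operator (Lemma~\ref{l:nh2}), and separately shows that a $u$-hyperbolic measure is hyperbolic by comparing dimensions against the Oseledets splitting (Lemma~\ref{l:nh1}). Your version is shorter and more direct; the paper's version is more structural (it actually recovers the hyperbolic splitting from the inverse) and is needed to carry the rest of the five-way equivalence in Theorem~\ref{t:mainhyp}. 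Second, you obtain continuity of $\Gamma_{o(x)}^{-1}$ by invoking the open mapping theorem for Fr\'echet spaces after establishing bijectivity, whereas the paper shows explicitly that the same formula defines a \emph{bounded} operator $\Upsilon:X_n\to X_m$ for every large $n>m$, with a bound depending only on $(x,\varepsilon,n,m)$, and then deduces continuity on $\mathcal{N}$ from the compatibility of these restrictions (this is the content of (iv),(v)~$\Rightarrow$~(iii) in Theorem~\ref{t:mainhyp}). Your route is cleaner if all one wants is Theorem~\ref{t:nuh}; the paper's explicit bounds are more constructive and feed directly into the operator norm estimates used later in the shadowing proof. Both arguments are sound; one small imprecision is the phrase ``injectivity is handled exactly as in the easy direction'' --- the mechanism there is the opposite (a nonzero exponent forcing exponential growth versus a zero exponent allowing sub-exponential behavior), though the conclusion that a nonzero tangent orbit escapes $\mathcal{N}$ has the same form.
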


We discuss in the same section in some detail the structure of the space ${{{%
{\mathcal{N}}}}}$ with respect to the inverse of $\Gamma _{o(x{{{\mathbf{)}}}%
}}$, and develop further characterizations of nonuniform hyperbolicity more
suitable to applications.

As an application, we investigate in Section 4 limits of sequences of Anosov
diffeomorphisms. We show that if uniform hyperbolicity of Anosov
diffeomorphisms is destroyed on a set $A$ of measure 0 in such a way that
the speed of divergence of uniform bounds is not exponentially fast with
respect to the size of neighborhoods of $A$, then the limit is a
nonuniformly hyperbolic map. This application suggests that our
characterization may provide new insights, as the proof is analytical
(including relatively strong tools such as the Open mapping theorem for Fr%
\'{e}chet spaces) and ergodic-theoretical, but requires no geometric
information which is typically required in invariant cone and similar
techniques.

One of the key topological properties of uniformly and nonuniformly
hyperbolic sets is that one can prove a version of shadowing lemma, and as a
corollary that there are infinitely many periodic orbits. We can say that an
invariant set or measure $\mu $ is \textit{shadowable}, if such a shadowing
lemma holds (a more precise definition is in Section 5).

We finally show that something less than hyperbolicity is required to obtain
the shadowing property, and can be expressed as follows:

\begin{theorem}
\label{t:shadowing}Let $f$ be a $C^{1+\alpha }$ diffeomorphism on $M$, and $%
\Lambda $ an invariant set. Assume that there is some $n$, $1\leq n\leq
\infty $, such that for all $x\in \Lambda $, $\Gamma _{o(x{{{\mathbf{)}}}}}$
has a continuous inverse in $X_{n}$. Then $\Lambda $ is shadowable.
\end{theorem}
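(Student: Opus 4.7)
The plan is to reduce shadowing to solving a nonlinear equation $\Phi(\mathbf{x})=0$ by Newton iteration on $X_n$, with the hypothesised invertibility of $\Gamma$ playing the role of the derivative-inverse condition in the classical Mather--Lanford implicit function argument. Working in local charts on $M=\mathbb{R}^d/\mathbb{Z}^d$, define $\Phi\colon M^{\mathbb{Z}}\to(\mathbb{R}^d)^{\mathbb{Z}}$ by $\Phi(\mathbf{x})_k = x_k - f(x_{k-1})$, so that $\Phi^{-1}(0)$ is precisely the set of true $f$-orbits and pseudo-orbits are those $\mathbf{y}$ with $\|\Phi(\mathbf{y})\|_n$ small. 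A direct expansion identifies the Fr\'echet derivative $D\Phi(\mathbf{x})$ with $\Gamma_{\mathbf{x}}$, and the $C^{1+\alpha}$ hypothesis gives local H\"older continuity of $\mathbf{x}\mapsto\Gamma_{\mathbf{x}}$ as a map into the bounded operators on $X_n$.

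Fix $x\in\Lambda$ and set $K=\|\Gamma_{o(x)}^{-1}\|_n$, finite by hypothesis. Step one is to extend invertibility to nearby pseudo-orbits: the elementary estimate $\|\Gamma_{\mathbf{y}}-\Gamma_{o(x)}\|_n \leq e^{1/n}\sup_k\|Df(y_{k-1})-Df(x_{k-1})\|$, combined with uniform continuity of $Df$ on $M$, yields $\delta>0$ such that $\sup_k d(y_k,x_k)<\delta$ forces $\|\Gamma_{\mathbf{y}}^{-1}\|_n\leq 2K$ via a Neumann series. Step two is the Newton iteration $\mathbf{y}^{(j+1)} = \mathbf{y}^{(j)} - \Gamma_{\mathbf{y}^{(j)}}^{-1}\Phi(\mathbf{y}^{(j)})$ started at $\mathbf{y}^{(0)}=\mathbf{y}$. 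The $C^{1+\alpha}$ Taylor remainder gives the standard Newton--Kantorovich estimate $\|\Phi(\mathbf{y}^{(j+1)})\|_n \leq C(2K)^{1+\alpha}\|\Phi(\mathbf{y}^{(j)})\|_n^{1+\alpha}$, which converges with $(1+\alpha)$-order once the initial pseudo-orbit error is small enough; the limit $\mathbf{x}^{\infty}$ satisfies $\Phi(\mathbf{x}^{\infty})=0$ with $\|\mathbf{x}^{\infty}-\mathbf{y}\|_n = O(\|\Phi(\mathbf{y})\|_n)$, which is the required shadow.

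The main obstacle I anticipate is keeping the Newton iterates $\mathbf{y}^{(j)}$ inside the $\delta$-neighbourhood of $o(x)$ where $\|\Gamma_{\mathbf{y}^{(j)}}^{-1}\|_n\leq 2K$ holds uniformly. Since the corrections are controlled only in the weighted norm $\|\cdot\|_n$, they can in principle cause the iterates to drift from $o(x)$ at rate $\exp(|k|/n)$, jeopardising the uniform closeness required for the Neumann series. Resolving this will presumably require either a formulation in which pseudo-orbit errors are measured in $\|\cdot\|_\infty$ while the shadow is measured in $\|\cdot\|_n$, or a more careful application of the H\"older remainder that tracks the weight explicitly; in either case the precise meaning of \emph{shadowable} in Section 5 should be tailored to accommodate this. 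Once that is reconciled, what remains is a standard Newton--Kantorovich convergence argument.
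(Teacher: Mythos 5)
The framework you identify — Mather--Lanford implicit-function argument with $\Gamma_{\mathbf{x}}$ as the derivative — is indeed the right one, and you correctly flag the mismatch between the weighted norm $\|\cdot\|_n$ and the uniform closeness needed as the central obstruction. But the proposal has two genuine gaps, and it does not resolve the obstruction it raises.

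First, your Step one only treats pseudoorbits $\mathbf{y}$ that are \emph{uniformly close to a single fixed orbit} $o(x)$ (you require $\sup_k d(y_k,x_k)<\delta$). The definition of shadowable in Section~5 requires handling pseudoorbits that merely lie in a $\delta$-neighborhood of a subset $\Lambda_k\subseteq\Lambda$: at each index $j$, the point $y_j$ can be near a \emph{different} point $z_j\in\Lambda$, and the $z_j$ need not form an orbit. This is the entire content of the shadowing lemma (otherwise it is nearly vacuous even in the uniformly hyperbolic case). Consequently there is no single $\Gamma_{o(x)}^{-1}$ whose Neumann perturbation controls $\Gamma_{\mathbf{y}}^{-1}$; the paper instead builds an approximate inverse $\tilde\Theta$ by gluing the $0$-th rows of the inverses $\Upsilon^{(i)}$ of $\Gamma_{o(z_i)}$ with a decaying weight, $\tilde\Theta_{i,j}=\lambda^{|j-i|}\Upsilon^{(i)}_{0,j-i}$, exploiting the $C^{1+\alpha}$ H\"older bound and the matrix estimates (\ref{r:in2b})--(\ref{r:in25b}). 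A further decomposition $\Lambda_m^*=\bigcup_r\Lambda_{m,r}$ (Lemma~\ref{l:eps}) is needed because $\|\Gamma_{o(x)}^{-1}\|_n$ is not uniformly bounded over $\Lambda$ and because the right-inverse estimate requires a modulus of continuity for the matrix entries $\Upsilon^{(i)}_{0,\cdot}$ as $z_i$ varies. None of this structure is present in the proposal.

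Second, you correctly observe that Newton corrections controlled only in $\|\cdot\|_n$ can grow like $\exp(|k|/n)$ and thus escape the neighborhood where the Neumann series is valid, but you leave this unresolved (``presumably require\dots''). The paper's resolution is essential and nontrivial: once $\Gamma_{\mathbf{y}}$ is shown invertible in $X_n$ with a uniform bound, the quasi-diagonal matrix structure and the shift-conjugation argument of Lemma~\ref{l:uh3} promote this to invertibility in $X_\infty$ with an explicit bound (\ref{p:bnd}); the fixed-point iteration is then run entirely in $X_\infty$, where closeness of iterates is the sup norm and the difficulty you raise evaporates. (The paper also uses a simple contraction rather than Newton, but that is a minor choice.) Without both the multi-orbit patching and the $X_n\to X_\infty$ lift, the proposal does not yield the stated theorem.
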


We prove it in Section 5, and also explain in which sense it generalizes the
Shadowing lemmas in uniformly and nonuniformly hyperbolic cases.

We now suggest various applications of that result. First, the shadowing of
the type described in Theorem\ \ref{t:shadowing} may hold in many dynamical
systems, such as twist and symplectic maps and more general Hamiltonian
systems. They often have rich regions of hyperbolic-like behavior with
shadowing phenomena, but nonuniform hyperbolicity on a set of positive
measure has not been proved and remains in many cases a conjecture.
Furthermore, it can be used in analysis of infinitely dimensional dynamical
systems, where it is difficult to use classical definitions of uniform and
nonuniform hyperbolicity. Finally, one may try to apply it for further
investigation of relationship of shadowing and stochastic stability (\cite%
{Bonatti:05}, Problem D10).

\section{Uniform hyperbolicity}

In this section $f$ is a $C^{1}$ diffeomorphism on $M$. We introduce a
family of norms on $({{{{\mathbb{R}}}}}^{d})^{{{{{\mathbb{Z}}}}}}$ more
general than the norms $X_{n}$, $1\leq n\leq \infty $. Let $\boldsymbol{w}%
=(w_{k})_{k\in {{{{\mathbb{Z}}}}}}$ be a sequence of real numbers such that $%
w_{k}>0$ and such that for some $b>0$, 
\begin{equation}
\sup_{k\in {{{\mathbf{Z}}}}}\frac{w_{k-1}}{w_{k}}<b,\ \sup_{k\in {{{\mathbf{Z%
}}}}}\frac{w_{k}}{w_{k-1}}<b.  \label{r:n0}
\end{equation}

Assume $N$ is a norm defined as 
\begin{equation*}
N({{{\mathbf{\eta }}}})=\sup_{k\in \boldsymbol{Z}}w_{k}|\eta _{k}|,
\end{equation*}%
and let $X_{\boldsymbol{w}}$ be the set of all ${{{\mathbf{\eta }}}}\in ({{{{%
\mathbb{R}}}}}^{d})^{{{{{\mathbb{Z}}}}}}$ such that $N({{{\mathbf{\eta }}}}%
)<\infty $. If $\Upsilon $ is a linear operator on $X_{\boldsymbol{w}}$
which can be represented as a matrix $\Upsilon =(\Upsilon _{i,j})$, $i,j\in {%
{{{\mathbb{Z}}}}}$, where $\Upsilon _{i,j}$ is a linear operator on ${{{{%
\mathbb{R}}}}}^{d}$, then the norm $N(\Upsilon )$ can be bounded with 
\begin{equation}
N(\Upsilon )\leq \sup_{i\in \boldsymbol{Z}}w_{i}\sum_{j\in \boldsymbol{Z}}%
\frac{1}{w_{j}}|\Upsilon _{i,j}|{{{\text{.}}}}  \label{r:norm}
\end{equation}%
(The results such as (\ref{r:norm}) on operators with representations as
infinite matrices are summarized in Section 6:\ Appendix). Recall the
definition of $\Gamma _{o(x)}$ in Section 1, and that we assume that $M={{{{%
\mathbb{R}}}}}^{d}/{{{{\mathbb{Z}}}}}^{d}$, hence compact. It is easy to
check that (\ref{r:n0}), (\ref{r:norm}) imply that if $x\in M$, then $\Gamma
_{o(x)}$ is a bounded linear operator on $X_{\boldsymbol{w}}$.

First note that for a uniformly hyperbolic set $\Lambda $, the angle between 
$E^{s}(x)$ and $E^{u}(x)\ $is uniformly bounded away from zero (e.g. \cite%
{Katok:95}, Corollary 6.4.5).\ Therefore there exists a constant $a>0$ such
that, if $\eta =\eta ^{s}+\eta ^{u}$ is the hyperbolic splitting of $\eta
\in T_{x}M$, then 
\begin{equation}
|\eta ^{s}|\leq a|\eta |,|\eta ^{u}|\leq a|\eta |.  \label{r:cangle}
\end{equation}
Similarly, if $\mathbf{\eta }\in X_{\boldsymbol{w}}$ and $\mathbf{\eta =\eta 
}^{s}+\mathbf{\eta }^{u}$ is the hyperbolic splitting in each component
along some sequence in $\Lambda $, then%
\begin{equation}
N(\mathbf{\eta }^{s})\leq aN(\mathbf{\eta }),\text{ }N(\mathbf{\eta }%
^{u})\leq aN(\mathbf{\eta }).  \label{r:nice}
\end{equation}

\begin{proposition}
\label{p:uh1}Let $\Lambda \subset M$ be a closed invariant uniformly
hyperbolic set. There exists $\delta >0$ such that for any norm $N$
satisfying (\ref{r:n0}) with $1\leqslant b\leqslant 1+\delta $, the
following holds: for all $x\in M$, $\Gamma _{o(x)}$ is invertible in $X_{%
\boldsymbol{w}}$ and $N(\Gamma _{o(x)}^{-1})$ is uniformly bounded in $%
\Lambda $.
\end{proposition}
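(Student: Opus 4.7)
The plan is to write down an explicit right inverse of $\Gamma_{o(x)}$ built from the stable--unstable splitting along $o(x)$, view it as an infinite block matrix, and use (\ref{r:norm}) to bound its $N$-norm. Since uniform hyperbolicity gives entrywise exponential decay while (\ref{r:n0}) allows at worst geometric growth of $w_k/w_j$, the relevant double sum converges as long as $b\lambda<1$, and this will dictate the choice of $\delta$.

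Fix $x\in\Lambda$, let $o(x)=(x_k)$, and for $\boldsymbol{\xi}\in X_{\boldsymbol{w}}$ split $\xi_k=\xi_k^s+\xi_k^u$ along $E^s(x_k)\oplus E^u(x_k)$. Set
\begin{equation*}
\eta_k \;:=\; \sum_{j\leq k}Df^{k-j}(x_j)\xi_j^s \;-\; \sum_{j>k}Df^{k-j}(x_j)\xi_j^u.
\end{equation*}
Uniform hyperbolicity together with (\ref{r:cangle}) yields the termwise bound $ca\lambda^{|k-j|}|\xi_j|$, so the series converges absolutely and an index shift verifies $\eta_k-Df(x_{k-1})\eta_{k-1}=\xi_k$; hence $\boldsymbol{\eta}$ is a right inverse. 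Injectivity on $X_{\boldsymbol{w}}$ follows because any tangent orbit in the kernel would have its stable (resp.\ unstable) component growing like $\lambda^{-|k|}$ as $k\to-\infty$ (resp.\ $k\to+\infty$), at a rate faster than $1/w_k\leq b^{|k|}/w_0$ can accommodate once $b\lambda<1$.

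Writing $\Upsilon:=\Gamma_{o(x)}^{-1}$ as the block matrix with entries $\Upsilon_{k,j}=Df^{k-j}(x_j)P^s(x_j)$ for $j\leq k$ and $\Upsilon_{k,j}=-Df^{k-j}(x_j)P^u(x_j)$ for $j>k$, one has $|\Upsilon_{k,j}|\leq ca\lambda^{|k-j|}$ with $c,a,\lambda$ depending only on $\Lambda$. Telescoping (\ref{r:n0}) gives $w_k/w_j\leq b^{|k-j|}$, so (\ref{r:norm}) produces
\begin{equation*}
N(\Upsilon)\;\leq\;ca\sum_{m\in\mathbb{Z}}(b\lambda)^{|m|}\;=\;ca\,\frac{1+b\lambda}{1-b\lambda},
\end{equation*}
which is finite for any $b<\lambda^{-1}$. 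Taking any $\delta\in(0,\lambda^{-1}-1)$ then delivers the proposition, with a bound uniform in $\Lambda$. The only genuinely nontrivial step is checking that the classical Anosov-style formula for the inverse still controls the weighted sup norm, and this is precisely where the restriction $b<\lambda^{-1}$ defining $\delta$ enters.
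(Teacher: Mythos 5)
Your proposal is correct and follows essentially the same route as the paper: you write down the same explicit formula for the inverse (your $\boldsymbol\eta$ in terms of $\boldsymbol\xi$ is the paper's $\boldsymbol\xi^s,\boldsymbol\xi^u$ in terms of $\boldsymbol\eta$, after reindexing), you argue injectivity by the same exponential-growth-versus-geometric-weight comparison, and you choose $\delta$ by exactly the condition $\lambda(1+\delta)<1$. The only cosmetic difference is that you bound $N(\Upsilon)$ in one shot from the block-matrix entries $|\Upsilon_{k,j}|\le ca\lambda^{|k-j|}$ via (\ref{r:norm}), whereas the paper estimates $N(\boldsymbol\xi^s)$ and $N(\boldsymbol\xi^u)$ separately and then combines them by the triangle inequality and (\ref{r:nice}); both yield the same constant $ca\frac{1+b\lambda}{1-b\lambda}$.
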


\begin{proof}
Let $c>0$, $\lambda <1$ be as in the definition of uniformly hyperbolic set.
Take any $\delta >0$ such that $\lambda (1+\delta )<1$ and choose any $x\in
\Lambda $. Since (\ref{r:n0}) holds and $b\leqslant 1+\delta $ as assumed,
for any $k\in {{{{\mathbb{Z}}}}}$ and $l\geqslant 0$ 
\begin{equation}
\frac{w_{k}}{w_{k-l}}\leqslant (1+\delta )^{l},\ \frac{w_{k}}{w_{k+l}}%
\leqslant (1+\delta )^{l}{{{\text{.}}}}  \label{r:h2}
\end{equation}%
\newline
\textit{Injectivity of }$\Gamma _{o(x)}$\textit{\ on }$X_{\boldsymbol{w}}$%
\textit{. }Assume that $\Gamma _{o(x)}\boldsymbol{\eta }=0$ for some $%
\boldsymbol{\eta }\in X_{\boldsymbol{w}}$. For any $k\in {{{{\mathbb{Z}}}}}$
we can write $\eta _{k}=\eta _{k}^{s}+\eta _{k}^{u}$, where $\eta
_{k}^{s}\in E^{s}(x_{k})$ and $\eta _{k}^{u}\in E^{u}(x_{k})$. Definition of
uniform hyperbolicity then implies that for any $k\in {{{{\mathbb{Z}}}}}$, $%
\eta _{k}^{s}=Df(x_{k-1})\eta _{k-1}^{s}$ and $\eta _{k}^{u}=Df(x_{k-1})\eta
_{k-1}^{u}$. Assume now $\eta _{j}^{s}\neq 0$ for some integer $j$. Then for
all $l\geqslant 0$, 
\begin{equation}
|\eta _{j}^{s}|=|Df^{l}(x_{j-l})\eta _{j-l}^{s}|\leqslant c\lambda ^{l}|\eta
_{j-l}^{s}|{{{\text{.}}}}  \label{r:h3}
\end{equation}

Combining inequalities (\ref{r:h2}) and (\ref{r:h3}) we conclude that for
all $l\geqslant 0$ 
\begin{equation}
w_{j-l}|\eta _{j-l}^{s}|\geqslant \frac{1}{c(\lambda (1+\delta ))^{l}}%
w_{j}|\eta _{j}^{s}|.  \label{r:h35}
\end{equation}

As $\lambda (1+\delta )<1$, we get from (\ref{r:h35}) that $w_{j-l}|\eta
_{j-l}^{s}|$ diverges to $\infty $ as $l\rightarrow \infty $, hence ${{{%
\mathbf{\eta }}}}^{s}\notin X_{\boldsymbol{w}}$, which is in contradiction
with (\ref{r:nice}). Similarly we get that $\eta _{j}^{u}=0$ for all $j\in {{%
{{\mathbb{Z}}}}}$, therefore ${{{\mathbf{\eta }}}}=0$.

\textit{Surjectivity of }$\Gamma _{o(x)}$\textit{\ on }$X_{\boldsymbol{w}}$%
\textit{. }Let $\mathbf{\eta }\in X_{\boldsymbol{w}}$. For each integer $k$
we define 
\begin{eqnarray*}
\xi _{k}^{s} &=&\sum_{l\geq 0}Df^{l}(x_{k-l})\eta _{k-l}^{s}, \\
\xi _{k}^{u} &=&-\sum_{l\geq 1}Df^{-l}(x_{k+l})\eta _{k+l}^{u}.
\end{eqnarray*}

Now using (\ref{r:h2}) and the definition of $N$ we get 
\begin{eqnarray*}
w_{k}|\xi _{k}^{s}| &\leqslant &\sum_{l\geq 0}w_{k}c\lambda ^{l}|\eta
_{k-l}^{s}|=\sum_{l\geq 0}\frac{w_{k}}{w_{k-l}}c\lambda ^{l}w_{k-l}|\eta
_{k-l}^{s}| \\
&\leqslant &\sum_{l\geqslant 0}c(\lambda (1+\delta ))^{l}w_{k-l}|\eta
_{k-l}^{s}|\leqslant \frac{c}{1-\lambda (1+\delta )}N(\boldsymbol{\eta }%
^{s}).
\end{eqnarray*}%
We deduce the series in the definition of $\xi _{k}^{s}$ is absolutely
convergent, that $\boldsymbol{\xi }^{s}:=(\xi _{k}^{s})\in X_{\boldsymbol{w}%
} $ and that 
\begin{equation}
N(\boldsymbol{\xi }^{s})\leqslant \frac{c}{1-\lambda (1+\delta )}N(%
\boldsymbol{\eta }^{s}).  \label{r:h4}
\end{equation}%
Similarly one can show that $\boldsymbol{\xi }^{u}:=(\xi _{k}^{u})_{k}\in X_{%
\boldsymbol{w}}$ and 
\begin{equation}
N(\boldsymbol{\xi }^{u})\leqslant \frac{c\lambda (1+\delta )}{1-\lambda
(1+\delta )}N(\boldsymbol{\eta }^{u}).  \label{r:h5}
\end{equation}%
The triangle inequality, (\ref{r:nice}), (\ref{r:h4}) and (\ref{r:h5}) now
yield 
\begin{equation}
N(\boldsymbol{\xi })\leqslant ac\frac{1+\lambda (1+\delta )}{1-\lambda
(1+\delta )}N(\boldsymbol{\eta }){{{\text{.}}}}  \label{r:h6}
\end{equation}

It is easy to verify that $\Gamma _{o(x)}\boldsymbol{\xi }=\boldsymbol{\eta }
$. Also the constant on the right-hand side of (\ref{r:h6}) is the uniform
bound on $N(\Gamma _{o(x)}^{-1})$.
\end{proof}

The following is a result from \cite{Mather68} adapted to our setting as in 
\cite{Lanford90}.

\begin{proposition}
\label{p:uh2}Assume $\Lambda \subseteq M$ is a closed, invariant set, such
that for each $x\in M$, $\Gamma _{o(x)}$ is invertible in $X_{\infty }$ and $%
||\Gamma _{o(x)}^{-1}||_{\infty }$ is bounded uniformly for $x\in \Lambda $.
Then $\Lambda $ is uniformly hyperbolic.
\end{proposition}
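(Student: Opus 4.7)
I would follow the classical strategy of Mather, adapted to the sequence-space setting of Lanford. Set $K := \sup_{x \in \Lambda}||\Gamma_{o(x)}^{-1}||_{\infty}$. For each $x \in \Lambda$ I define $E^s(x) := \{\eta \in T_xM : \sup_{k \geq 0}|Df^k(x)\eta| < \infty\}$ and $E^u(x)$ analogously for backward iterates; both are $Df$-invariant linear subspaces of $T_xM$.

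To prove the splitting $T_xM = E^s(x) \oplus E^u(x)$, given $\eta \in T_xM$ I let $\mathbf{h} \in X_\infty$ be the impulse with $h_0 = \eta$ and $h_k = 0$ for $k \neq 0$, and set $\boldsymbol{\xi} := \Gamma_{o(x)}^{-1}\mathbf{h}$. Solving the recursion $\xi_k = Df(x_{k-1})\xi_{k-1}$ for $k \neq 0$ in both directions, boundedness of $\boldsymbol{\xi}$ forces $\xi_0 \in E^s(x)$ and $\xi_0 - \eta \in E^u(x)$, giving $\eta = \xi_0 + (\eta - \xi_0)$. Injectivity of $\Gamma_{o(x)}$ gives $E^s(x) \cap E^u(x) = \{0\}$, since any common nonzero vector would generate a bounded two-sided tangent orbit in $\ker\Gamma_{o(x)}$. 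The bound $|\xi_0| \leq ||\boldsymbol{\xi}||_\infty \leq K|\eta|$ produces uniform bounds on both projections, hence a uniform positive lower bound on the angle between $E^s(x)$ and $E^u(x)$. Next, for $\eta \in E^s(x)$ the sequence $\eta_k = Df^k(x)\eta$ for $k \geq 0$ and $\eta_k = 0$ for $k < 0$ lies in $X_\infty$, and a direct check shows $\Gamma_{o(x)}\boldsymbol{\eta}$ is an impulse of magnitude $|\eta|$ at index $0$; inverting yields $|Df^k(x)\eta| \leq K|\eta|$ for all $k \geq 0$, and symmetrically for $E^u$ in the backward direction.

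The main obstacle is upgrading this boundedness on $E^s$ to \emph{uniform exponential contraction}; for this I plan a compactness-limit argument. Suppose for contradiction that no constants $c > 0$ and $0 < \lambda < 1$ work uniformly on $\Lambda$. Then there exist $x_n \in \Lambda$, $\eta_n \in E^s(x_n)$ with $|\eta_n| = 1$, and $k_n \to \infty$ such that $|Df^{k_n}(x_n)\eta_n| \geq \delta$ for some fixed $\delta > 0$. Shifting to the midpoint, set $j_n := \lfloor k_n/2 \rfloor$, $y_n := f^{j_n}(x_n) \in \Lambda$, and $\zeta_n := Df^{j_n}(x_n)\eta_n \in E^s(y_n)$. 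The previous estimate yields $|\zeta_n| \leq K$, while applying the same bound at $y_n$ to the remaining forward iterates gives $\delta \leq |Df^{k_n-j_n}(y_n)\zeta_n| \leq K|\zeta_n|$, so $|\zeta_n| \geq \delta/K$. For each fixed $k \in \mathbb{Z}$, once $n$ is large enough that $-j_n \leq k \leq k_n - j_n$, the identity $Df^k(y_n)\zeta_n = Df^{j_n+k}(x_n)\eta_n$ combined with the forward bound at $x_n$ gives $|Df^k(y_n)\zeta_n| \leq K$. By compactness of $\Lambda$ and continuity of $Df$, I extract a subsequence with $y_n \to y \in \Lambda$ and $\zeta_n \to \zeta$ satisfying $|\zeta| \geq \delta/K$, whose full two-sided tangent orbit is uniformly bounded; this produces a nonzero element of $\ker\Gamma_{o(y)}$ in $X_\infty$, contradicting injectivity. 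The symmetric argument on $E^u$ yields uniform expansion, and the invariance $Df(x)E^s(x) = E^s(f(x))$ and $Df(x)E^u(x) = E^u(f(x))$ is immediate from the definitions together with invertibility of $Df(x)$.
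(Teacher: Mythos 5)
Your opening moves coincide with the paper's: apply $\Gamma_{o(x)}^{-1}$ to the impulse at index $0$ to obtain the splitting $\theta = \theta^s + \theta^u$, deduce bounded projections (hence uniform angle) from $\|\Gamma_{o(x)}^{-1}\|_\infty \le K$, and get $|Df^k(x)\eta| \le K|\eta|$ for $\eta \in E^s(x)$, $k \ge 0$ by applying the bounded inverse to the one-sided extension. From there, however, you and the paper diverge, and your route has a genuine gap at its pivot. You assert that if no uniform $c,\lambda$ work then there exist $x_n \in \Lambda$, unit $\eta_n \in E^s(x_n)$, and $k_n \to \infty$ with $|Df^{k_n}(x_n)\eta_n| \ge \delta$ for some fixed $\delta > 0$. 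The raw negation of uniform exponential contraction ("for all $c,\lambda$ there exist $x,\eta,k$ with $|Df^k(x)\eta| > c\lambda^k|\eta|$") does not deliver this: a priori, $a_k := \sup\{|Df^k(x)\eta| : x\in\Lambda,\ \eta\in E^s(x),\ |\eta|=1\}$ could decay to $0$ subexponentially (say like $1/\log k$), which would defeat exponential decay without ever yielding a sequence bounded below by a fixed $\delta$. To close the gap you need to observe that $a_{k}$ is submultiplicative, $a_{k+l}\le a_k a_l$ (from $Df$-invariance of $E^s$), so that by Fekete's lemma either $\lim a_k^{1/k} < 1$ (in which case, combined with $a_k\le K$, you get the uniform rate) or $a_k \ge 1$ for all $k$, which is exactly what feeds your construction with $\delta = 1/2$ and $k_n = n$. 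Once this is inserted, the remainder of your limit extraction — recentring at the midpoint $y_n = f^{j_n}(x_n)$, bounding $\delta/K \le |\zeta_n| \le K$, using the forward bound at $x_n$ to control $Df^k(y_n)\zeta_n$ on $[-j_n, k_n-j_n]$, and passing to the limit in $\Lambda$ to manufacture a nonzero bounded tangent orbit contradicting injectivity — is sound.

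Even with the Fekete step added, your proof is genuinely different from the paper's. The paper never appeals to compactness of $\Lambda$ or any limiting argument: instead it introduces the one-parameter family $B(z)$, obtained from $\Gamma_{o(x)}$ by scaling the lower diagonal by $z$ on positive indices and by $1/z$ on nonpositive ones. For $z$ slightly larger than $1$ this is a small perturbation (size $\le \sup_M |Df|\,(z-1)$), so $B(z)$ remains invertible with a bound controlled only by $K$ and $\sup_M|Df|$. Conjugating, the bounded solution of $B(z)\boldsymbol{\xi}^*=\boldsymbol{\eta}$ corresponds to $\xi_k = z^{-|k|}\xi_k^*$, and uniqueness of the inverse in $X_\infty$ then forces $|\xi_k|\le c_3 z^{-|k|}|\theta|$ outright. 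This produces the exponential rate in one stroke, is entirely operator-theoretic, works orbit-by-orbit rather than requiring a convergent subsequence in $\Lambda$, and sidesteps both submultiplicativity and any compactness beyond that of $M$. Your approach is a legitimate classical alternative, but the perturbation trick is shorter, avoids the Fekete detour, and is more robust; it is also the same device the paper re-uses in Lemma \ref{l:matrix} of the appendix, so absorbing it is worthwhile.
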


\begin{proof}
Assume $c_{1}$ is\ the uniform bound on $||\Gamma _{o(x)}^{-1}||_{\infty }$
and choose $x_{0}\in \Lambda $. Choose any $\theta \in {{{{\mathbb{R}}}}}%
^{d}=T_{x_{0}}M$. Let $\eta _{0}=\theta $ and $\eta _{k}=0$ if $k\neq 0$.
Clearly $\boldsymbol{\eta }=(\eta _{k})\in X_{\infty }$. Since $\Gamma
_{o(x_{0})}$ is invertible, there exists $\boldsymbol{\xi }\in X_{\infty }$
such that $\Gamma _{o(x_{0})}\boldsymbol{\xi }=\boldsymbol{\eta }$. This can
be written as 
\begin{eqnarray}
\xi _{k} &=&Df(x_{k-1})\xi _{k-1}{{{\text{, }}}}k\neq 0{{{\text{,}}}}
\label{r:hyp1} \\
\xi _{0} &=&Df(x_{-1})\xi _{-1}+\theta {{{\text{.}}}}  \notag
\end{eqnarray}

We will now show that $\theta =\theta ^{s}+\theta ^{u}$, where $\theta
^{s}=\xi _{0}$ and $\theta ^{u}=-Df(x_{-1})\xi _{-1}$ is the hyperbolic
splitting. We define a family of operators $B(z)$, $z\geqslant 1$ on $%
X_{\infty }$ as 
\begin{equation*}
(B(z)\boldsymbol{\nu })_{k}=\QATOPD\{ . {\nu _{k}-(1/z)A_{k-1}\nu _{k-1}%
\text{, }k\leq 0}{\nu _{k}-zA_{k-1}\nu _{k-1}\text{, }k\geq 1.}.
\end{equation*}%
Note that $B(1)=\Gamma _{o(x_{0})}$. Since $Df$ is continuous and $M$ is
compact, let $c_{2}$ be the maximum of $|Df(x)|$ over $M$. Inequalities $%
||B(z)-B(1)||_{\infty }\leqslant c_{2}(z-1)$ and $||B(1)^{-1}||_{\infty
}\leqslant c_{1}$ imply that $B(z)$ is an invertible operator for $%
1\leqslant z<1+1/(c_{1}c_{2})$ and 
\begin{equation}
||B(z)^{-1}||_{\infty }\leqslant \frac{1}{c_{1}^{-1}-c_{2}(z-1)},
\label{r:13b}
\end{equation}%
as $B(z)^{-1}=B(1)^{-1}\sum_{k=0}^{\infty }((B(1)-B(z))B(1)^{-1})^{k}$. Now
choose any $\lambda $ such that $1<1/\lambda <1+1/(c_{1}c_{2})$, and find a
(unique)\ $\boldsymbol{\xi }^{\ast }$ such that $B(\lambda )\boldsymbol{\xi }%
^{\ast }=\boldsymbol{\eta }$. If $c_{3}$ is the right-hand side of (\ref%
{r:13b}) when $z=\lambda $, then 
\begin{equation*}
||\boldsymbol{\xi }^{\ast }||_{\infty }\leq ||B(\lambda )^{-1}||_{\infty }||%
\boldsymbol{\eta }||_{\infty }\leq c_{3}||\boldsymbol{\eta }||_{\infty
}=c_{3}|\theta |\text{.}
\end{equation*}

As $\boldsymbol{\xi }^{\ast \ast }$ defined as $\xi _{k}^{\ast \ast
}=\lambda ^{|k|}\xi _{k}^{\ast }$ clearly belongs to $X_{\infty }$ and by
definition of $B(\lambda )$, $\Gamma _{o(x_{0})}\boldsymbol{\xi }^{\ast \ast
}=\boldsymbol{\eta }$, it must be $\boldsymbol{\xi }^{\ast \ast }=%
\boldsymbol{\xi }$, thus 
\begin{equation}
|\xi _{k}|\leq c_{3}\lambda ^{|k|}|\theta |.  \label{r:13c}
\end{equation}
Because of (\ref{r:hyp1}) and the definition of $\theta ^{s}$, $\theta ^{u}$%
, for all $k\geq 1$ it is 
\begin{equation}
\xi _{k}=Df^{k}(x_{0})\theta ^{s},\xi _{-k}=Df^{-k}(x_{0})\theta ^{u}.
\label{r:13d}
\end{equation}

Denote by $E^{s}(x_{0})$, $E^{u}(x_{0})$ the sets of all $\theta ^{s}$, $%
\theta ^{u}$ constructed as above. The sets $E^{s}(x_{0})$, $E^{u}(x_{0})$
are linear subspaces, as images of linear maps 
\begin{equation}
P^{s}=P^{0}\circ \Gamma _{o(x_{0})}^{-1}\circ J\text{, }P^{u}=I-P^{s},
\label{r:13e}
\end{equation}
where $P^{u},P^{s}:{{{{\mathbb{R}}}}}^{d}\rightarrow {{{{\mathbb{R}}}}}^{d}$%
, $J:{{{{\mathbb{R}}}}}^{d}\rightarrow X_{\infty }$ is the inclusion $\theta
\mapsto (...,0,0,\theta ,0,..)$ and $P^{0}$ projection to the $0$%
-coordinate. The construction implies that $P^{s}$, $P^{u}$ are identities
on $E^{s}(x_{0})$, $E^{u}(x_{0})$ respectively, therefore and because of $%
I=P^{u}+P^{s}$ we get $T_{x_{0}}M=E^{s}(x_{0})\oplus E^{u}(x_{0})$.
Invariance of $E^{s}$, $E^{u}$ with respect of $Df$ follows from (\ref{r:13e}%
), and uniformly hyperbolic inequalities from (\ref{r:13c}), (\ref{r:13d})
by inserting $\theta =\theta ^{s}$, $\theta =\theta ^{u}$.
\end{proof}

The following Lemma gives further insight on the relationship of the graded
norms $||.||_{n}$ and invertibility of $\Gamma _{\boldsymbol{x}}$. Let $S$
denote the shift $(S\mathbf{\eta })_{k}=\eta _{k-1}$. Note that for any norm 
$N$ with the property (\ref{r:n0}), $S$ is a bounded linear operator with a
continuous inverse on $X_{\boldsymbol{w}}$. Recall the definition of the
norm and space $X_{n}$ in (\ref{d:normn}). In the following we use results
from the Appendix regarding matrix representation of operators on $X_{n}$, $%
X_{\infty }$. In particular, if $\Gamma _{\mathbf{x}}$ is invertible with a
continuous inverse, according to Lemma \ref{l:matrix} its inverse has a
matrix representation.

\begin{lemma}
\label{l:uh3}Assume $\mathbf{x}\in M^{{{{{\mathbb{Z}}}}}}$ (not necessarily
an orbit). Assume that for some constants $n\in {{{{\mathbb{N}}}}}$ and $%
c_{4}>0$, for all integers $k$, $\Gamma _{S^{k}\boldsymbol{x}}$ is
invertible in $X_{n}$ and $||\Gamma _{S^{k}\boldsymbol{x}}^{-1}||_{n}\leq
c_{4}$. Then $\Gamma _{\boldsymbol{x}}$ is invertible in $X_{\infty }$ and 
\begin{equation*}
||\Gamma _{\boldsymbol{x}}^{-1}||_{\infty }\leq 2c_{4}d\sqrt{d}/(1-\func{exp}%
(-1/n)){{{\text{.}}}}
\end{equation*}
\end{lemma}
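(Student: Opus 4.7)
The plan is to exploit a shift-conjugation identity together with the matrix representation of the inverses furnished by Lemma \ref{l:matrix}. A direct coordinatewise computation gives $\Gamma_{\mathbf{x}} S^j = S^j \Gamma_{S^{-j}\mathbf{x}}$ for every $j\in\mathbb{Z}$, and since $S$ is a continuous isomorphism of each $X_n$, this yields the operator identity $\Gamma_{\mathbf{x}}^{-1} = S^j \Gamma_{S^{-j}\mathbf{x}}^{-1} S^{-j}$ on $X_n$. Injectivity of $\Gamma_{\mathbf{x}}$ on $X_\infty$ is then immediate from the inclusion $X_\infty \subset X_n$ (one has $\|\cdot\|_n \leq \|\cdot\|_\infty$) combined with the $k=0$ case of the hypothesis.

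For surjectivity with an explicit norm estimate, given $\boldsymbol{\eta}\in X_\infty$ set $\boldsymbol{\xi} := \Gamma_{\mathbf{x}}^{-1}\boldsymbol{\eta}$, computed via the $X_n$-inverse; the task is to show $\boldsymbol{\xi}\in X_\infty$ with $\|\boldsymbol{\xi}\|_\infty$ controlled by $\|\boldsymbol{\eta}\|_\infty$. The shift identity reads $\xi_j = \bigl(\Gamma_{S^{-j}\mathbf{x}}^{-1} S^{-j}\boldsymbol{\eta}\bigr)_0$, so the coordinate of interest is always extracted at $k=0$, where the weight $\exp(-|k|/n)$ is maximal --- this is the central gain of the reduction. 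Denoting by $B^{(k)}$ the matrix of $\Gamma_{S^{k}\mathbf{x}}^{-1}$, testing on delta sequences localized at position $m$ and using $\|B^{(k)}\|_n\leq c_4$ yields the pointwise bound $|B^{(k)}_{0,m}| \leq c_4 \exp(-|m|/n)$. The shift identity then gives $B_{j,m} = B^{(-j)}_{0,m-j}$ for the matrix $B$ of $\Gamma_{\mathbf{x}}^{-1}$, so $|B_{j,m}| \leq c_4 \exp(-|m-j|/n)$, and summing against $\boldsymbol{\eta}\in X_\infty$ produces
\[
|\xi_j| \leq \sum_{m}|B_{j,m}||\eta_m| \leq c_4 \|\boldsymbol{\eta}\|_\infty \sum_{\ell\in\mathbb{Z}}\exp(-|\ell|/n) \leq \frac{2 c_4}{1-\exp(-1/n)}\|\boldsymbol{\eta}\|_\infty.
\]

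The main technical nuisance will be bookkeeping the $d$-dependent conversion between the matrix-entry norm appearing in the Appendix bound (\ref{r:norm}) and the Euclidean operator norm on each $d\times d$ block: this is what produces the additional factor $d\sqrt{d}$ in the stated constant. A short final verification that the $B\boldsymbol{\eta}$ so constructed really equals $\Gamma_{\mathbf{x}}^{-1}\boldsymbol{\eta}$ (not merely a candidate that happens to satisfy the right bound) is obtained by applying $\Gamma_{\mathbf{x}}$ componentwise, rewinding the shift identity, and invoking the uniqueness of $\Gamma_{\mathbf{x}}$-preimages in $X_n$.
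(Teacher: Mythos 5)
Your proof is correct and takes essentially the same route as the paper: shift conjugation plus exponential decay of the matrix entries of the $X_n$-inverse, summed via (\ref{r:rel}). In fact your delta-sequence test gives $|B_{j,m}|\leq c_4\exp(-|m-j|/n)$ \emph{without} any $d\sqrt{d}$ factor, so the $d$-dependent conversion you flag at the end is not actually needed --- the paper incurs $d\sqrt{d}$ only because it bounds the entry indirectly through the weighted row-sum inequality (\ref{r:44}) and then extracts a single term, whereas your direct test proves the lemma with the sharper constant $2c_4/(1-\exp(-1/n))$.
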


\begin{proof}
Let $\Upsilon $ be the inverse of $\Gamma _{\boldsymbol{x}}$ with the matrix
representation $(\Upsilon _{i,j}),i,j\in \boldsymbol{Z}$, and $\Upsilon
^{(k)}$ the inverse of $\Gamma _{S^{k}\boldsymbol{x}}$, all in $X_{n}$. As $%
\Gamma _{S^{k}\boldsymbol{x}}=S^{-k}\Gamma _{\boldsymbol{x}}S^{k}$, and $S$
is automorphism of $X_{n}$, we deduce that $\Upsilon ^{(k)}=S^{-k}\Upsilon
S^{k}$. Using that and (\ref{r:44}), we obtain that for all $k\in 
\boldsymbol{Z}$, 
\begin{equation*}
\sup_{i}\sum_{j}\func{exp}((|j|-|i|)/n)|\Upsilon _{i+k,j+k}|/(d\sqrt{d})\leq
||\Upsilon ^{(k)}||_{n}\leq c_{4}{{{\text{.}}}}
\end{equation*}

By choosing $i=0$, $j=j_{0}-i_{0}$, $k=i_{0}$, we deduce from it that for
any $i_{0},j_{0}$, $|\Upsilon _{i_{0},j_{0}}|\leq c_{4}d\sqrt{d}\func{exp}%
(-|j_{0}-i_{0}|/n)$. We now get that 
\begin{equation*}
||\Upsilon ||_{\infty }\leq \sup_{i}\sum_{j}|\Upsilon _{i,j}|\leq 2c_{4}d%
\sqrt{d}/(1-\func{exp}(-1/n)){{{\text{.}}}}
\end{equation*}
\end{proof}

Theorem \ref{t:uh} follows directly from Propositions \ref{p:uh1}, \ref%
{p:uh2} and Lemma \ref{l:uh3}.

\section{Nonuniform hyperbolicity}

Prior to focusing on nonuniform hyperbolicity we will introduce a Fr\'{e}%
chet space ${{{{\mathcal{N}}}}}$ which will be the key in the
characterization of nonuniform hyperbolicity. Define ${{{{\mathcal{N}}}}}$
as the set of all sequences ${{\mathbf{\eta }}}{{{\mathbf{=}}}}(\eta
_{k})_{k\in {{{{\mathbb{Z}}}}}}$, $\eta _{k}\in {{{{\mathbb{R}}}}}^{d}$,
satisfying 
\begin{equation*}
\limsup_{k}\frac{1}{|k|}\log |\eta _{k}|\leq 0.
\end{equation*}%
We can naturally equip ${{{{\mathcal{N}}}}}$ with the structure of a vector
space. Also ${{{{\mathcal{N}}}}}=\bigcap_{n=1}^{\infty }X_{n}$, where $X_{n}$
is as defined in the introduction, and the norms $||.||_{n}$ are graded in
the sense that $||{{\mathbf{\eta }}}||_{1}\leq ||{{\mathbf{\eta }}}%
||_{2}\leq ||{{\mathbf{\eta }}}||_{3}\leq \ldots $. The topology on ${{{{%
\mathcal{N}}}}}$ is the topology generated by the family of norms $||.||_{n}$%
, $n\in \boldsymbol{N}$. Recall that a complete, locally convex topological
vector space with a topology induced by a translation-invariant metric is
called a Fr\'{e}chet space (\cite{Rudin:91}).

\begin{proposition}
${{{{\mathcal{N}}}}}$ with the topology induced by the family of norms $%
||\cdot ||_{n}$, $n\in {{{{\mathbb{N}}}}}$ forms a Fr\'{e}chet space.
\end{proposition}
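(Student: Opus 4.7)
The plan is to verify the three defining features of a Fréchet space: (i) $\mathcal{N}$ is a vector space, (ii) the topology generated by the family $\{\|\cdot\|_n\}_{n\in\mathbb{N}}$ is locally convex and induced by a translation-invariant metric, and (iii) this metric is complete.

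First I would check that $\mathcal{N}$ is closed under coordinate-wise addition and scalar multiplication, which is immediate from the definition $\limsup_k |k|^{-1}\log|\eta_k|\leq 0$ (and equivalently from $\mathcal{N}=\bigcap_n X_n$ together with the fact that each $X_n$ is a Banach space under $\|\cdot\|_n$). Each $\|\cdot\|_n$ is then a genuine norm on $\mathcal{N}$, and in particular the family is separating, giving local convexity. For metrizability I would use the standard construction
\begin{equation*}
d(\boldsymbol{\eta},\boldsymbol{\xi})=\sum_{n=1}^{\infty}2^{-n}\frac{\|\boldsymbol{\eta}-\boldsymbol{\xi}\|_n}{1+\|\boldsymbol{\eta}-\boldsymbol{\xi}\|_n},
\end{equation*}
which is translation-invariant by construction, and check, via the monotonicity $\|\cdot\|_1\leq\|\cdot\|_2\leq\cdots$ noted in the text, that $d$-convergence is equivalent to convergence in every norm $\|\cdot\|_n$.

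The main step is completeness. Suppose $(\boldsymbol{\eta}^{(m)})_{m\geq 1}$ is $d$-Cauchy; then by the equivalence just stated it is $\|\cdot\|_n$-Cauchy for every $n$. Since each $(X_n,\|\cdot\|_n)$ is a Banach space (as a weighted $\ell^\infty$-space on $\mathbb{Z}$), the sequence converges in $X_n$ to some $\boldsymbol{\eta}^{[n]}\in X_n$. The key observation is that $\|\cdot\|_n$-convergence forces coordinate-wise convergence, because $|\eta_k^{(m)}-\eta_k^{[n]}|\leq \exp(|k|/n)\|\boldsymbol{\eta}^{(m)}-\boldsymbol{\eta}^{[n]}\|_n$, and coordinate-wise limits are unique; hence all the $\boldsymbol{\eta}^{[n]}$ coincide with a single sequence $\boldsymbol{\eta}\in\bigcap_n X_n=\mathcal{N}$. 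Finally $\boldsymbol{\eta}^{(m)}\to\boldsymbol{\eta}$ in each $\|\cdot\|_n$, hence in the metric $d$.

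The only mild obstacle is bookkeeping in the last step: one must ensure that the limits obtained in different Banach spaces $X_n$ really glue to one and the same element, which is handled by the coordinate-wise argument above. Everything else is the routine Fréchet-space machinery applied to a countable family of graded norms, so no deeper difficulty is expected.
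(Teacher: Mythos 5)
Your proof is correct and follows essentially the same route as the paper: reduce to completeness after citing the standard metrizability of a countable family of norms, use Banach-space completeness of each $X_n$ to get a limit in every $X_n$, and then glue these limits into a single element of $\mathcal{N}=\bigcap_n X_n$. The only cosmetic difference is that you identify the limits via coordinate-wise convergence, whereas the paper invokes the monotonicity $\|\cdot\|_1\leq\|\cdot\|_2\leq\cdots$ directly (so that $\|\cdot\|_n$-convergence implies $\|\cdot\|_m$-convergence for $m\leq n$); both arguments are one line and equivalent in substance.
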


\begin{proof}
A countable family of norms always induces a locally convex topology which
can be generated by a translation-invariant metric (\cite{Rudin:91}), so we
only need to prove completeness of ${{{{\mathcal{N}}}}}$. Assume that $({{%
\mathbf{\eta }}}^{k})_{k\in {{{{\mathbb{N}}}}}}$ is a Cauchy sequence in ${{{%
{\mathcal{N}}}}}$, so by definition it is Cauchy in $X_{n}$ for all $n$,
hence convergent to some $\boldsymbol{\xi }^{n}$. As the norms $||.||_{n}$
are graded, $\boldsymbol{\xi }^{n}=:\boldsymbol{\xi }$ is independent of $n$%
, so ${{\mathbf{\eta }}}^{k}$ converge to $\boldsymbol{\xi }$ in ${{{{%
\mathcal{N}}}}}$.
\end{proof}

We noted in the introduction that $\Gamma _{o(x)}$ is a well defined
continuous linear operator on $X_{n}$ for any positive integer $n$ and any ${%
{\mathbf{x}}}\in M^{{{{{\mathbb{Z}}}}}}$, so $\Gamma _{o(x)}$ is a well
defined continuous linear operator on ${{{{\mathcal{N}}}}}$. It is also easy
to check that $\Gamma _{o(x)}:X_{n}\rightarrow X_{m}$ is a well defined and
continuous for any positive integers $n\geq m$.

Recall that an ergodic $f$-invariant Borel probability measure $\mu $ on $M$
is hyperbolic if none of the Lyapunov exponents are zero. We now extend
Theorem \ref{t:nuh} and introduce several related characterizations of
nonuniform hyperbolicity.

\begin{theorem}
\label{t:mainhyp}Say $f$ is a diffeomorphism on $M$, and $\mu $ an ergodic $%
f $-invariant Borel probability measure. Then the following is equivalent:

(i) The measure $\mu $ is hyperbolic.

(ii) For $\mu $-a.e. $x\in M$, $\Gamma _{o(x)}$ is bijective on ${{{{%
\mathcal{N}}}}}$.

(iii) For $\mu $-a.e. $x\in M$, $\Gamma _{o(x)}$ has a continuous inverse on 
${{{{\mathcal{N}}}}}$.

(iv) There exists a positive integer $n_{0}$ such that for any $n>n_{0}$, $%
n>m$, and for $\mu $-a.e. $x\in M$ there exists a continuous operator $%
\Upsilon :X_{n}\rightarrow X_{m}$ which is a left and right inverse of $%
\Gamma _{o(x)}$ in appropriate spaces.

(v) There exists increasing sequences of integers $n_{i}>m_{i}$ both
converging to $\infty $ so that for $\mu $-a.e. $x\in M$ there exist
continuous operators $\Upsilon _{i}:X_{n_{i}}\rightarrow X_{m_{i}}$ which
are left and right inverses of $\Gamma _{o(x)}$ in appropriate spaces.
\end{theorem}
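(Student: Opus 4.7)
The plan is to prove the cycle (i) $\Rightarrow$ (iv) $\Rightarrow$ (v) $\Rightarrow$ (ii) $\Rightarrow$ (i), together with the equivalence (ii) $\Leftrightarrow$ (iii): one direction is trivial, and the converse is an application of the open mapping theorem for Fréchet spaces (\cite{Rudin:91}) to the continuous bijection $\Gamma_{o(x)} : \mathcal{N} \to \mathcal{N}$. The central ingredient throughout is Oseledets' multiplicative ergodic theorem, which supplies, for $\mu$-a.e.\ $x$, a $Df$-invariant splitting $T_{x_k}M = E^s(x_k) \oplus E^u(x_k)$ along the orbit, Lyapunov exponents $\chi^s < 0 < \chi^u$, and exponential bounds of the same shape as (\ref{r:h3}), with the uniform constants $c$, $a$ of (\ref{r:cangle}) replaced by tempered functions $C(\cdot), a(\cdot)$ satisfying $C(x_k),\ 1/a(x_k) \leq D_\delta\, e^{\delta|k|}$ for every $\delta > 0$.

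For (i) $\Rightarrow$ (iv) I would mimic the surjectivity argument in Proposition \ref{p:uh1}, setting
\[
\xi_k^s = \sum_{l\geq 0} Df^l(x_{k-l})\,\eta_{k-l}^s, \qquad \xi_k^u = -\sum_{l\geq 1} Df^{-l}(x_{k+l})\,\eta_{k+l}^u,
\]
and re-running the norm estimates with the tempered replacements in place. For $\eta \in X_n$, the $l$-th summand is dominated by $D_\delta\, e^{\delta|k-l|}\, e^{(\chi^s + \varepsilon) l}\, \|\eta\|_n\, e^{|k-l|/n}$. Choosing $\varepsilon, \delta$ small and $n$ large enough that $\chi^s + \varepsilon + \delta + 1/n < 0$ (and the symmetric condition for the unstable sum) yields absolute convergence together with a bound $\|\xi\|_m \leq K(x)\,\|\eta\|_n$ valid for any $m < n$ whose gap $1/m - 1/n$ dominates the effective tempered rate $\delta$. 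This produces the continuous inverse $\Upsilon: X_n \to X_m$ required in (iv), with $n_0$ determined by $\chi^s$ and $\chi^u$; injectivity follows from the same argument as in Proposition \ref{p:uh1} once the geometric bound (\ref{r:h35}) is replaced by its tempered analogue. The implication (iv) $\Rightarrow$ (v) is obtained by extracting any pair of sequences $n_i, m_i \to \infty$ satisfying the condition.

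For (v) $\Rightarrow$ (ii), the family $\{\Upsilon_i\}$ defines a single map $\Upsilon$ on $\mathcal{N} = \bigcap_n X_n$: for $\eta \in \mathcal{N} \subseteq \bigcap_i X_{n_i}$, two candidates $\Upsilon_i \eta, \Upsilon_j \eta$ differ by an element of a common $X_{m_\ell}$ which is annihilated by $\Gamma_{o(x)}$, and the left-inverse property of the $\Upsilon_\ell$ excludes nontrivial kernels there. The common value lies in $\bigcap_i X_{m_i} = \mathcal{N}$ since $m_i \to \infty$, yielding a well-defined bijection. Finally, for (ii) $\Rightarrow$ (i), suppose some Lyapunov exponent $\chi_i(x) = 0$; Oseledets then supplies a nonzero $v \in T_x M$ with $\lim_{k \to \pm \infty} \tfrac{1}{|k|}\log|Df^k(x) v| = 0$, so the tangent orbit $\eta_k := Df^k(x) v$ is a nonzero element of $\mathcal{N}$ killed by $\Gamma_{o(x)}$, contradicting injectivity.

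I expect the principal obstacle to be the step (i) $\Rightarrow$ (iv): the subexponential but genuine growth of the tempered constants $C(x_k)$ and $1/a(x_k)$ has to be delicately traded against the exponential weights $e^{-|k|/n}$ defining $\|\cdot\|_n$. This trade-off is the structural reason one generally loses an index $n \mapsto m < n$ (so that a uniform inverse on $X_\infty$ is unavailable in the nonuniform regime) and explains why the intersection $\mathcal{N} = \bigcap_n X_n$, with subexponential growth tolerated on both sides, is the correct ambient space.
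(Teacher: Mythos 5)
Your proof is correct in its broad outline and differs from the paper's in two interesting ways. For (ii) $\Rightarrow$ (i), you appeal directly to the two-sided Oseledets theorem: a zero Lyapunov exponent supplies a nonzero tangent orbit with subexponential growth in both directions, hence a nonzero element of $\mathcal{N}$ in the kernel of $\Gamma_{o(x)}$. This only uses injectivity of $\Gamma_{o(x)}$ and is more direct than the paper, which routes through an auxiliary notion of \emph{u-hyperbolic} points (Lemmas~\ref{l:nh1}--\ref{l:nh2}) and uses surjectivity to build the splitting $E^s\oplus E^u$ explicitly before comparing dimensions against the Oseledets splitting. Similarly, you obtain (ii) $\Rightarrow$ (iii) from the open mapping theorem for Fr\'echet spaces, whereas the paper arranges its cycle as (v) $\Rightarrow$ (iii) $\Rightarrow$ (ii) and gets continuity of the inverse directly from the construction in Lemma~\ref{l:nm} and Corollary~\ref{c:nh3}, so the open mapping theorem is not needed there (it reappears in the paper only in Section~4, via Proposition~\ref{p:frechet}). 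The step (i) $\Rightarrow$ (iv) is essentially identical to the paper's Lemma~\ref{l:nm}, with Pesin's regular hyperbolicity constant $c(x,\varepsilon)$ playing the role of your tempered function, and your estimate reproduces (\ref{r:bigsum}). The one place your argument is genuinely loose is (v) $\Rightarrow$ (ii): you say the difference $\zeta = \Upsilon_i\eta - \Upsilon_j\eta$ lies in some $X_{m_\ell}$ and the left-inverse property of $\Upsilon_\ell$ excludes a kernel ``there,'' but the left-inverse identity for $\Upsilon_\ell$ is available only on the smaller space $X_{n_\ell}\subsetneq X_{m_\ell}$ (since $n_\ell > m_\ell$), so it does not apply to $\zeta$ as stated. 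The fix is to restrict to indices $i,j$ large enough that $m_{\min(i,j)} \geq n_1$; then $\zeta \in X_{m_{\min(i,j)}} \subseteq X_{n_1}$ and $\Upsilon_1\Gamma_{o(x)}\zeta = \zeta = 0$, which suffices because $\bigcap_{i\geq i_0} X_{m_i} = \mathcal{N}$ for any fixed $i_0$. The paper handles this by invoking uniqueness of the matrix inverse (via the absolute convergence guaranteed by the exponential decay in Lemma~\ref{l:matrix} and (\ref{r:exp2})), which amounts to the same repair.
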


In the next section we further adapt this characterization to applications
(see Corollary \ref{c:nicechar}).

Recall that a point $x\in M$ is regularly hyperbolic, if there exists $%
0<\lambda <1$ and for each sufficiently small $\varepsilon >0$ a constant $%
c(x,\varepsilon )$ so that the following holds: there exists a decomposition 
$T_{x_{k}}M=E^{s}(x_{k})\oplus E^{u}(x_{k})$ along the orbit $x_{k}=f^{k}(x)$
with the invariance property $Df(x_{k})E^{u}(x_{k})=E^{u}(x_{k+1})$, $%
Df(x_{k})E^{s}(x_{k})=E^{s}(x_{k+1})$ and such that%
\begin{eqnarray*}
|Df^{j}(x_{k})\eta | &\leq &c(x,\varepsilon )\lambda ^{j}e^{\varepsilon
|k|}|\eta |\text{ whenever }\eta \in E^{s}(x_{k})\text{ and }j>0\text{, and}
\\
|Df^{-j}(x_{k})\eta | &\leq &c(x,\varepsilon )\lambda ^{j}e^{\varepsilon
|k|}|\eta |\text{ whenever }\eta \in E^{u}(x_{k})\text{ and }j>0\text{.}
\end{eqnarray*}

In addition, the angle between $E^{u}(x_{k})$ and $E^{s}(x_{k})$ is bounded,
so that if $\eta =\eta ^{s}+\eta ^{u}$ is the hyperbolic splitting at $%
T_{x_{k}}M$, then%
\begin{equation*}
|\eta ^{s}|\leq c(x,\varepsilon )e^{\varepsilon |k|}|\eta |\text{, }|\eta
^{u}|\leq c(x,\varepsilon )e^{\varepsilon |k|}|\eta |\text{.}
\end{equation*}

Pesin proved that if an ergodic $f$-invariant measure $\mu $ is hyperbolic,
then\ $\mu $-a.e. $x\in M$ is regularly hyperbolic and the function $%
c(x,\varepsilon )$ is Borel measurable for all small enough $\varepsilon >0$
(see e.g. \cite{Barreira:02}, Theorem 2.1.3).

We say that $x\in M$ is \textit{u-hyperbolic}, if there exist two subspaces $%
E^{u}(x)$ and $E^{s}(x)$ spanning $T_{x}M$ such that for all $\eta ^{u}\in
E^{u}\setminus \{0\}$, $\eta ^{s}\in E^{s}\setminus \{0\}$, 
\begin{eqnarray*}
\limsup_{k\rightarrow \infty }\frac{1}{k}\log |Df^{-k}(x)\eta ^{s}| &>&0, \\
\limsup_{k\rightarrow \infty }\frac{1}{k}\log |Df^{k}(x)\eta ^{u}| &>&0.
\end{eqnarray*}

We say that an ergodic $f$-invariant Borel probability measure $\mu $ on $M$
is \textit{u-hyperbolic} if $\mu -$a.e point is u-hyperbolic.

We prove Theorem \ref{t:mainhyp} in a series of Lemmas.

\begin{lemma}
\label{l:nh1} Every $u$-hyperbolic measure is hyperbolic.
\end{lemma}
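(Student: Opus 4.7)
The plan is to invoke Oseledets' multiplicative ergodic theorem to obtain a Lyapunov decomposition at $\mu$-a.e.\ $x$, and then run a dimension argument showing that $u$-hyperbolicity forces the zero-exponent subspace to be trivial.

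By ergodicity of $\mu$ and two-sided Oseledets (which applies since $M$ is compact and $f$ is a diffeomorphism), for $\mu$-a.e.\ $x$ there is a measurable $Df$-invariant splitting $T_x M = \bigoplus_i E_{\chi_i}(x)$ in which the distinct Lyapunov exponents $\chi_1 > \cdots > \chi_r$ are $\mu$-a.e.\ constant, every $v \in E_{\chi_i}(x) \setminus \{0\}$ satisfies $\lim_{k \to \infty}\tfrac{1}{k}\log|Df^{k}(x)v| = \chi_i$ and $\lim_{k \to \infty}\tfrac{1}{k}\log|Df^{-k}(x)v| = -\chi_i$, and the angles between Oseledets subspaces along the orbit decay at most subexponentially. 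Together these imply that for a general $v = \sum_i v_i$ with $v_i \in E_{\chi_i}(x)$,
\[
\lim_{k\to\infty}\tfrac{1}{k}\log|Df^{k}(x)v| = \max\{\chi_i : v_i \neq 0\}, \qquad \lim_{k\to\infty}\tfrac{1}{k}\log|Df^{-k}(x)v| = -\min\{\chi_i : v_i \neq 0\}.
\]
I work at a point $x$ that is simultaneously Oseledets regular and $u$-hyperbolic; by hypothesis such points have full $\mu$-measure.

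Group the Oseledets subspaces as $E^{+}(x) = \bigoplus_{\chi_i > 0} E_{\chi_i}(x)$, $E^{-}(x) = \bigoplus_{\chi_i < 0} E_{\chi_i}(x)$, and $E^{0}(x) = \bigoplus_{\chi_i = 0} E_{\chi_i}(x)$. The key observation is that $E^s(x) \cap (E^+(x) \oplus E^0(x)) = \{0\}$: any nonzero $\eta^s$ in this intersection would have all Oseledets components carrying a nonnegative exponent, making $\limsup_k \tfrac{1}{k}\log|Df^{-k}(x)\eta^s| = -\min\{\chi_i : (\eta^s)_i \neq 0\} \leq 0$, contradicting the defining inequality of $u$-hyperbolicity on $E^s$. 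Symmetrically, $E^u(x) \cap (E^-(x) \oplus E^0(x)) = \{0\}$.

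These two transversality relations give $\dim E^s(x) \leq \dim E^-(x)$ and $\dim E^u(x) \leq \dim E^+(x)$, while the splitting $E^s(x) \oplus E^u(x) = T_x M$ forces $\dim E^s(x) + \dim E^u(x) = d = \dim E^+(x) + \dim E^0(x) + \dim E^-(x)$. Hence $\dim E^0(x) = 0$, no Lyapunov exponent vanishes, and $\mu$ is hyperbolic. The main subtlety I expect is precisely in justifying that the $\limsup$ appearing in the $u$-hyperbolicity inequalities coincides with an honest limit equal to the Oseledets exponent on the given vector; two-sided Oseledets regularity, together with the subexponential decay of angles between Oseledets subspaces, is exactly what makes this passage work.
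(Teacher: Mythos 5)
Your proof is correct and follows the same route as the paper: by Oseledets regularity, a nonzero vector in $E^s(x)\cap(E^+(x)\oplus E^0(x))$ or in $E^u(x)\cap(E^-(x)\oplus E^0(x))$ would violate the $u$-hyperbolicity inequalities, and the resulting dimension count forces $\dim E^0(x)=0$; the paper leaves exactly this dimension/intersection argument as ``easy to deduce,'' and you have filled it in. One minor wording point: the paper's definition of $u$-hyperbolicity only asks that $E^s(x)$ and $E^u(x)$ \emph{span} $T_xM$ (so $\dim E^s+\dim E^u\geq d$, not necessarily a direct sum), but that inequality is all your counting step actually needs.
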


\begin{proof}
Suppose that $\mu $ is an $u$-hyperbolic measure which is not hyperbolic. By
Oseledec theorem there exists a Borel measurable set $B$ with $\mu (B)=1$
such that for all $x\in B$ there exists a splitting $T_{x}M=E^{-}(x)\oplus
E^{0}(x)\oplus E^{+}(x)$ of subspaces with positive, zero and negative
Lyapunov exponents respectively, and $\func{dim}E^{0}(x)\geq 1$.

Let $B^{\prime }\subseteq B$ be a set of full measure of u-hyperbolic
points, and let $E^{u}(x)$, $E^{s}(x)$ be two subspaces spanning $T_{x}M$
from the definition of u-hyperbolicity. Now by comparing the dimensions and
discussing intersections of $E^{-}(x)$,$E^{0}(x),E^{+}(x)$ and $%
E^{u}(x),E^{s}(x)$ it is easy to deduce contradiction.
\end{proof}

\begin{lemma}
\label{l:previous}Assume that $x\in M$ and that $\Gamma _{o(x)}$ is an
injective operator on ${{{{\mathcal{N}}}}}$. If $\mathbf{\eta }=(\eta _{k})$
is a non-zero tangent orbit at $o(x)$, then either $\limsup_{k\rightarrow
\infty }(1/k)\log |\eta _{k}|>0$ or $\limsup_{k\rightarrow \infty }(1/k)\log
|\eta _{-k}|>0$.
\end{lemma}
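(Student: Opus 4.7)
The plan is to argue by contraposition. Suppose that both $\limsup_{k\to\infty}(1/k)\log|\eta_{k}|\leq 0$ and $\limsup_{k\to\infty}(1/k)\log|\eta_{-k}|\leq 0$. Together these two conditions are equivalent to
\[
\limsup_{k\to\infty}\frac{1}{|k|}\log|\eta_{k}|\leq 0,
\]
which is precisely the membership condition for $\mathcal{N}$. Hence $\mathbf{\eta}\in\mathcal{N}$.

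Next, since $\mathbf{\eta}$ is a tangent orbit at $o(x)$, by definition $\eta_{k}=Df(x_{k-1})\eta_{k-1}$ for every $k\in\mathbb{Z}$, which by the defining formula (\ref{d:gamma}) of the operator $\Gamma_{o(x)}$ means exactly that $\Gamma_{o(x)}\mathbf{\eta}=0$.

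Combining the two observations, $\mathbf{\eta}$ is an element of $\mathcal{N}$ in the kernel of $\Gamma_{o(x)}$. By the injectivity hypothesis, this forces $\mathbf{\eta}=0$, contradicting the assumption that $\mathbf{\eta}$ is a nonzero tangent orbit. Therefore at least one of the two $\limsup$s must be strictly positive.

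There is no real obstacle here: the lemma is essentially a tautological consequence of the very definition of $\mathcal{N}$ as the intersection $\bigcap_n X_n$ of sub-exponentially weighted sequence spaces and of the fact that tangent orbits are exactly the elements of $\ker\Gamma_{o(x)}$. The only point that must be made carefully is that the single two-sided condition defining $\mathcal{N}$ is equivalent to two separate one-sided $\limsup$ conditions, which is clear since $|k|=k$ for $k>0$ and $|k|=-k$ for $k<0$.
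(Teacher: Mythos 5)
Your proof is correct and follows essentially the same route as the paper: argue by contradiction, observe that the two one-sided $\limsup$ conditions are equivalent to membership in $\mathcal{N}$, note that a tangent orbit lies in $\ker\Gamma_{o(x)}$, and invoke injectivity. The paper's own proof is just a terser version of exactly this argument.
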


\begin{proof}
Assume the contrary and find such non-zero ${{\mathbf{\eta }}}$ which is
then by assumption in ${{{{\mathcal{N}}}}}$. As ${{\mathbf{\eta }}}$ is a
tangent orbit, so is $\Gamma _{o(x)}{{\mathbf{\eta }}}=0$, which contradicts
the injectivity of $\Gamma _{o(x)}$.
\end{proof}

\begin{lemma}
\label{l:nh2} If $x\in M$ and $\Gamma _{o(x)}$ is bijective on ${{{{\mathcal{%
N}}}}}$, then $x$ is a u-hyperbolic point.
\end{lemma}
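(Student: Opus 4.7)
The plan is to mimic the construction used in Proposition \ref{p:uh2}, but replacing the Banach space $X_{\infty}$ with the Fr\'echet space $\mathcal{N}$ and using subexponential (rather than exponential) growth bounds. Fix $x\in M$, write $x_k = f^k(x)$, and let $\theta\in T_xM=\mathbb{R}^d$ be arbitrary. Define $\boldsymbol{\eta}\in\mathcal{N}$ by $\eta_0=\theta$ and $\eta_k=0$ for $k\ne 0$, and let $\boldsymbol{\xi} = \Gamma_{o(x)}^{-1}\boldsymbol{\eta}\in\mathcal{N}$, which exists uniquely by bijectivity. Unwrapping (\ref{d:gamma}) as in the proof of Proposition \ref{p:uh2}, we get $\xi_k = Df(x_{k-1})\xi_{k-1}$ for $k\ne 0$, and $\xi_0 = Df(x_{-1})\xi_{-1} + \theta$. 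Put $\theta^s := \xi_0$ and $\theta^u := \theta - \xi_0 = -Df(x_{-1})\xi_{-1}$, so $\theta=\theta^s+\theta^u$; then set $E^s(x) := P^s(T_xM)$ and $E^u(x) := (I-P^s)(T_xM)$, where $P^s = P^0\circ\Gamma_{o(x)}^{-1}\circ J$ as in (\ref{r:13e}). Both are linear subspaces by construction, and $T_xM = E^s(x)+E^u(x)$.

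Next, I would propagate $\boldsymbol{\xi}$ through the recursion exactly as in the proof of Proposition \ref{p:uh2}, obtaining the identities $\xi_k = Df^k(x)\theta^s$ for $k\ge 1$ and $\xi_{-k} = -Df^{-k}(x)\theta^u$ for $k\ge 1$. Because $\boldsymbol{\xi}\in\mathcal{N}$, the defining inequality $\limsup_{|k|\to\infty}(1/|k|)\log|\xi_k|\le 0$ immediately gives
\begin{equation*}
\limsup_{k\to\infty}\tfrac{1}{k}\log|Df^k(x)\theta^s|\le 0,\qquad \limsup_{k\to\infty}\tfrac{1}{k}\log|Df^{-k}(x)\theta^u|\le 0.
\end{equation*}

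Now I would invoke Lemma \ref{l:previous} twice. If $\theta^s\in E^s(x)\setminus\{0\}$, then the tangent orbit $(Df^k(x)\theta^s)_{k\in\mathbb{Z}}$ is nonzero; its forward part has nonpositive exponential rate by the display above, so the Lemma forces $\limsup_{k\to\infty}(1/k)\log|Df^{-k}(x)\theta^s|>0$. Symmetrically, any nonzero $\theta^u\in E^u(x)$ has $\limsup_{k\to\infty}(1/k)\log|Df^k(x)\theta^u|>0$. This gives exactly the two growth conditions required in the definition of u-hyperbolicity.

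It remains to check that $E^s(x)$ and $E^u(x)$ together span $T_xM$ (which is immediate from $\theta=\theta^s+\theta^u$) and, should the reader prefer the splitting to be direct, that $E^s(x)\cap E^u(x)=\{0\}$. The latter is again a one-line application of Lemma \ref{l:previous}: any $\eta$ in the intersection generates a tangent orbit with subexponential growth in both directions, hence lies in $\mathcal{N}$ with $\Gamma_{o(x)}$-image zero, forcing $\eta=0$ by injectivity. The main delicate point is just the identification of $\xi_{-k}$ with $-Df^{-k}(x)\theta^u$ via the cocycle property of $Df$ (and the sign convention in the definition of $\theta^u$); everything else is a straightforward transcription of Mather's original argument to the Fr\'echet setting, with Lemma \ref{l:previous} playing the role that the exponential bound (\ref{r:13c}) played in Proposition \ref{p:uh2}.
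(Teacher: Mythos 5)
Your proof is correct and follows essentially the same route as the paper's: construct $\boldsymbol{\xi}=\Gamma_{o(x)}^{-1}\boldsymbol{\eta}$, split $\theta=\theta^s+\theta^u$ via $P^s=P^0\circ\Gamma_{o(x)}^{-1}\circ J$, extend to full tangent orbits through $\theta^s$ and $\theta^u$, and apply Lemma~\ref{l:previous} together with the subexponential bounds coming from $\boldsymbol{\xi}\in\mathcal{N}$ to force the required positive limsups. The only cosmetic difference is that you make explicit the backward identity $\xi_{-k}=-Df^{-k}(x)\theta^u$ (which the paper leaves implicit, borrowing from Proposition~\ref{p:uh2}) and you add an optional argument that $E^s(x)\cap E^u(x)=\{0\}$, which the paper does not need since the definition of u-hyperbolicity only asks for spanning.
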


\begin{proof}
Suppose $\Gamma _{o(x)}$ is bijective on ${{{{\mathcal{N}}}}}$ and choose $%
\theta \in T_{x}M$. Define a ${{{\mathbf{\eta \in {{{{\mathcal{N}}}}}}}}}$
with $\eta _{0}=\theta $ and $\eta _{k}=0$ for $k$ non-zero. As $\Gamma
_{o(x)}$ is surjective, there exists ${{{\mathbf{{\mu }\in {{{{\mathcal{N}}}}%
}}}}}$ such that $\Gamma _{o(x)}{{{\mathbf{{\mu }={\eta }}}}}$. If we put $%
\theta ^{s}=\mu _{0}$ and $\theta ^{u}=-Df(f^{-1}(x))\mu _{-1}$, then $%
\theta =\theta ^{s}+\theta ^{u}$. Now let $\mu _{k}^{s}:=\mu _{k}$ for $%
k\geq 0$, and let 
\begin{equation}
\mu _{-k}^{s}=Df^{-k}(x)\theta ^{s}  \label{r:help0}
\end{equation}
for all $k\geqslant 1$. If $\theta ^{s}$ is non-zero then one easily checks
that ${{\mathbf{\mu }}}^{s}=(\mu _{k}^{s})$ is a non-zero tangent orbit. As
then $\Gamma _{o(x)}{{\mathbf{\mu }}}^{s}=0$ and by construction $%
\limsup_{k\rightarrow \infty }(1/k)\log |\eta _{k}^{s}|\leq 0$, injectivity
of $\Gamma _{o(x)}$, Lemma \ref{l:previous} and (\ref{r:help0}) imply that 
\begin{equation*}
\limsup_{k\rightarrow \infty }(1/k)\log |Df^{-k}(x)\theta ^{s}|>0.
\end{equation*}%
Analogously we show that $\limsup_{k\rightarrow \infty }(1/k)\log
|Df^{k}(x)\theta ^{u}|>0$ if $\theta ^{u}\neq 0$.

If $E^{s}(x_{0})$, $E^{u}(x_{0})$ are the sets of all $\theta ^{s}$, $\theta
^{u}$ constructed as above, we see that they are linear subspaces as images
of linear maps as constructed in the proof of Proposition \ref{p:uh2};\
spanning $T_{x}M$ also by construction.
\end{proof}

Suppose now that $x\in M$ is regularly hyperbolic. We now construct a left
and right inverse of $\Gamma _{o(x)}$ in appropriate spaces. Say $\mathbf{%
\mu }\in X_{n}$ for some positive integer $n$. As in the proof of
Proposition \ref{p:uh1}, we set%
\begin{eqnarray}
\eta _{k}^{s} &=&\sum_{l\geq 0}Df^{l}(x_{k-l})\mu _{k-l}^{s},  \label{r:one}
\\
\eta _{k}^{u} &=&-\sum_{l\geq 1}Df^{-l}(x_{k+l})\mu _{k+l}^{u},
\label{r:two}
\end{eqnarray}%
where $\mu _{k}=\mu _{k}^{s}+\mu _{k}^{u}$ is the hyperbolic splitting at $%
x_{k}=f^{k}(x)$.

We now use the definition of regular hyperbolicity and calculate for some
positive integer $m$:%
\begin{eqnarray}
e^{-|k|/m}|\eta _{k}^{s}| &\leq &e^{-|k|/m}\sum_{l\geq 0}c(x,\varepsilon
)\lambda ^{l}e^{\varepsilon |k-l|}|\mu _{k-l}^{s}|\leq  \notag \\
&\leq &e^{-|k|/m}\sum_{l\geq 0}c^{2}(x,\varepsilon )\lambda
^{l}e^{2\varepsilon |k-l|}e^{|k-l|/n}e^{-|k-l|/n}|\mu _{k-l}|\leq  \notag \\
&\leq &||\mu ||_{n}\cdot e^{(2\varepsilon +1/n-1/m)|k|}\cdot \sum_{l\geq
0}c^{2}(x,\varepsilon )e^{(2\varepsilon +1/n-\log (1/\lambda ))|l|}\text{.}
\label{r:bigsum}
\end{eqnarray}

If $n$ is large enough such that $1/n-\log (1/\lambda )<0$ and $n>m$,
clearly we can choose $\varepsilon >0$ small enough so that the expression
in (\ref{r:bigsum})\ multiplying $||\mu ||_{n}$ is convergent and bounded
uniformly in $k$ (that means, the bound depends only on $x,\varepsilon ,n,m$%
). We deduce that $\mathbf{\eta }^{s}\in X_{m}$, and also that the sum in (%
\ref{r:one}) is absolutely convergent. Analogously $\mathbf{\eta }^{u}\in
X_{m}$ and the sum in (\ref{r:two}) is absolutely convergent. Noting the
uniform bounds in (\ref{r:bigsum}) and its analogue for $\mathbf{\eta }^{u}$%
, we deduce that the operator $\Upsilon :\mathbf{\mu \mapsto \eta }$ is a
continuous operator $\Upsilon :X_{n}\rightarrow X_{m}$. As both $\Upsilon $
and $\Gamma _{o(x)}$ are operators with a matrix representation (see
Appendix for a discussion), it is easy to check that%
\begin{equation}
\Upsilon \circ \Gamma _{o(x)}=\Gamma _{o(x)}\circ \Upsilon =I
\label{r:inverse}
\end{equation}%
where $\Gamma _{o(x)}:X_{n}\rightarrow X_{n}$, respectively $\Gamma
_{o(x)}:X_{m}\rightarrow X_{m}$, and $I$ is the identity operator. We
summarize:

\begin{lemma}
\label{l:nm}Assume that $x\in M$ is regularly hyperbolic with a coefficient $%
0<\lambda <1$. Then for any $n>1/(\log (1/\lambda ))$ and $n>m$, where $n,m$
are positive integers, $\Gamma _{o(x)}$ has a left and right continuous
inverse $\Upsilon :X_{n}\rightarrow X_{m}$.
\end{lemma}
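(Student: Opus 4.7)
The proof is essentially outlined in the paragraphs immediately preceding the Lemma, so my plan is to assemble those ingredients into a clean argument and then verify the two-sided inverse property.

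The plan is to take $\boldsymbol{\mu} \in X_n$, decompose each coordinate along the splitting coming from regular hyperbolicity as $\mu_k = \mu_k^s + \mu_k^u$ (with $\mu_k^s \in E^s(x_k)$, $\mu_k^u \in E^u(x_k)$, $x_k = f^k(x)$), and \emph{define} $\Upsilon \boldsymbol{\mu} = \boldsymbol{\eta}$ by the formulas (\ref{r:one}) and (\ref{r:two}). First I would check that the hyperbolic splitting does not destroy the $X_n$ norm: the angle bound for regularly hyperbolic points gives $|\mu_k^s|, |\mu_k^u| \leq c(x,\varepsilon) e^{\varepsilon|k|} |\mu_k|$, so up to the harmless factor $c(x,\varepsilon) e^{\varepsilon|k|}$ we can control $|\mu_k^s|$ and $|\mu_k^u|$ by $e^{|k|/n}\|\boldsymbol{\mu}\|_n$.

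Next I would bound $e^{-|k|/m}|\eta_k^s|$ exactly as in the chain of inequalities (\ref{r:bigsum}): apply $|Df^l(x_{k-l}) \mu_{k-l}^s| \leq c(x,\varepsilon) \lambda^l e^{\varepsilon|k-l|} |\mu_{k-l}^s|$, then trade $|\mu_{k-l}^s|$ for $c(x,\varepsilon) e^{\varepsilon|k-l|}|\mu_{k-l}|$ and insert the factor $e^{|k-l|/n} e^{-|k-l|/n}$ to bring in $\|\boldsymbol{\mu}\|_n$. Using $|k-l| \leq |k|+l$, this lumps into a prefactor $e^{(2\varepsilon + 1/n - 1/m)|k|}$ times the geometric series $\sum_{l \geq 0} c^2(x,\varepsilon) e^{(2\varepsilon + 1/n - \log(1/\lambda))l}$. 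Here is the crux: since by hypothesis $1/n < \log(1/\lambda)$ and $1/n < 1/m$, one can pick $\varepsilon>0$ small enough so that both exponents $2\varepsilon+1/n-\log(1/\lambda)$ and $2\varepsilon+1/n-1/m$ are strictly negative. This simultaneously yields absolute convergence of the series defining $\eta_k^s$, the bound $\|\boldsymbol{\eta}^s\|_m \leq C(x,\varepsilon,n,m)\|\boldsymbol{\mu}\|_n$, and hence continuity of the partial operator $\boldsymbol{\mu}\mapsto\boldsymbol{\eta}^s$ from $X_n$ to $X_m$. The estimate for $\boldsymbol{\eta}^u$ is completely analogous (replace forward iterates by backward ones and use the $E^u$ bound), so $\Upsilon: X_n \to X_m$ is a well-defined bounded linear operator.

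Finally I would verify (\ref{r:inverse}). Both $\Gamma_{o(x)}$ and $\Upsilon$ have matrix representations with respect to the standard coordinates in $(\mathbb{R}^d)^{\mathbb{Z}}$, so composition can be computed entrywise. A direct telescoping computation shows $(\Gamma_{o(x)} \Upsilon \boldsymbol{\mu})_k = \eta_k^s - Df(x_{k-1})\eta_{k-1}^s + \eta_k^u - Df(x_{k-1})\eta_{k-1}^u = \mu_k^s + \mu_k^u = \mu_k$, using the telescoping in (\ref{r:one}) and the shift identity for (\ref{r:two}); the other composition $\Upsilon\Gamma_{o(x)} = I$ follows by the same formal manipulation once absolute convergence (already established above) legitimizes the rearrangements. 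I expect the main obstacle, modest as it is, to be bookkeeping the triangle inequality $|k-l| \leq |k|+l$ together with the $e^{\varepsilon|k|}$ coming from regular hyperbolicity so that the three constraints ($\varepsilon$ small, $1/n < \log(1/\lambda)$, $m<n$) translate cleanly into a single negative exponent; once this is in place, convergence and continuity follow at once from a geometric series.
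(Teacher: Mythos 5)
Your proposal is correct and follows essentially the same route as the paper: define $\Upsilon$ via (\ref{r:one})--(\ref{r:two}), estimate $e^{-|k|/m}|\eta_k^s|$ exactly as in (\ref{r:bigsum}) using the angle bound and $|k-l|\leq|k|+l$, choose $\varepsilon$ small so that $2\varepsilon+1/n-\log(1/\lambda)<0$ and $2\varepsilon+1/n-1/m\leq 0$, and then verify $\Upsilon\circ\Gamma_{o(x)}=\Gamma_{o(x)}\circ\Upsilon=I$ by the telescoping identity. The only minor difference is that you spell out the matrix-entrywise telescoping for the inverse relation (\ref{r:inverse}), which the paper dismisses with ``it is easy to check''; that added detail is sound and does no harm.
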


\begin{corollary}
\label{c:nh3} Assume that $x\in M$ is a regularly hyperbolic point. Then $%
\Gamma _{o(x)}$ has a continuous inverse on ${{{{\mathcal{N}}}}}$.
\end{corollary}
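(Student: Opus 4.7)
The plan is to reduce Corollary~\ref{c:nh3} to Lemma~\ref{l:nm} by assembling the family of inverses $\Upsilon:X_{n}\rightarrow X_{m}$ constructed there into a single operator on the Fr\'echet space ${{{{\mathcal{N}}}}}$. First I would observe that formulas (\ref{r:one})--(\ref{r:two}) define one and the same map $\Upsilon$ on sequences, independent of the pair $(n,m)$; Lemma~\ref{l:nm} simply tells us that when this formula is restricted to $X_{n}$ for $n>\max(m,1/\log(1/\lambda))$ it lands continuously in $X_{m}$. Since ${{{{\mathcal{N}}}}}=\bigcap_{n}X_{n}$, applying the lemma with $m$ fixed and $n$ chosen large enough shows that $\Upsilon$ restricts to a well-defined linear map ${{{{\mathcal{N}}}}}\rightarrow X_{m}$ for every $m$, hence to a linear map ${{{{\mathcal{N}}}}}\rightarrow {{{{\mathcal{N}}}}}$.

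Next, the identities $\Upsilon\circ\Gamma_{o(x)}=\Gamma_{o(x)}\circ\Upsilon=I$ on ${{{{\mathcal{N}}}}}$ are inherited verbatim from the corresponding identities on $X_{n}$ supplied by Lemma~\ref{l:nm} (equation (\ref{r:inverse})), since ${{{{\mathcal{N}}}}}\subset X_{n}$ and both $\Upsilon$ and $\Gamma_{o(x)}$ act by matrix formulas that are insensitive to which space we regard them on. This already gives bijectivity of $\Gamma_{o(x)}$ on ${{{{\mathcal{N}}}}}$, with $\Upsilon$ as its two-sided inverse.

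Finally, for continuity, I would recall that the Fr\'echet topology on ${{{{\mathcal{N}}}}}$ is generated by the countable family of norms $||\cdot||_{m}$, $m\in{{{{\mathbb{N}}}}}$, so that a linear map $T:{{{{\mathcal{N}}}}}\rightarrow {{{{\mathcal{N}}}}}$ is continuous precisely when for every $m$ there exist an index $n$ and a constant $C>0$ such that $||T\boldsymbol{\mu}||_{m}\leq C||\boldsymbol{\mu}||_{n}$ for all $\boldsymbol{\mu}\in{{{{\mathcal{N}}}}}$. Lemma~\ref{l:nm} provides exactly such a bound: given $m$, pick any $n>\max(m,1/\log(1/\lambda))$ and let $C$ be the operator norm of $\Upsilon:X_{n}\rightarrow X_{m}$. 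Hence $\Upsilon$ is continuous as a map ${{{{\mathcal{N}}}}}\rightarrow{{{{\mathcal{N}}}}}$, and the corollary follows.

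There is no genuine obstacle here; the only thing worth being careful about is the correct translation between the operator-norm bounds of Lemma~\ref{l:nm} and the definition of continuity in the Fr\'echet topology on ${{{{\mathcal{N}}}}}$. In this sense the corollary is essentially a repackaging of Lemma~\ref{l:nm}.
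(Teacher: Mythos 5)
Your proposal is correct and follows essentially the same route as the paper's proof: both observe that the operator $\Upsilon$ of Lemma~\ref{l:nm} is independent of the pair $(n,m)$, restrict it to ${{{{\mathcal{N}}}}}=\bigcap_{n}X_{n}$, and invoke the identity (\ref{r:inverse}) on ${{{{\mathcal{N}}}}}$. The only difference is that you spell out the Fr\'echet-space continuity criterion explicitly, whereas the paper treats it as immediate.
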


\begin{proof}
As $X_{n}$ is a decreasing family of sets, the operator $\Upsilon $
constructed in Lemma \ref{l:nm} does not depend on $n,m$ (this also follows
from its construction). We deduce that it is well-defined and continuous on $%
{{{{\mathcal{N}}}}}$. The relation (\ref{r:inverse}) restricted to ${{{{%
\mathcal{N}}}}}$ completes the proof.
\end{proof}

We now complete the proof of Theorem \ref{t:mainhyp}.

\begin{proof}
Lemma \ref{l:nm} and the Pesin theorem imply (i)$\Longrightarrow $(iv). The
implications (iv)$\Longrightarrow $(v) and (iii)$\Rightarrow $(ii) are
trivial. If (v) is true, as $X_{n}$ is a decreasing family of sets, it is
easy to deduce that $\Upsilon _{i}$ is independent of $i$. Therefore $%
\Upsilon :=\Upsilon _{i}$ is well defined and continuous on ${{{{\mathcal{N}}%
}}}$ and the inverse of $\Gamma _{o(x)}$, so (v)$\Longrightarrow $(iii).
Combining Lemmas \ref{l:nh1} and \ref{l:nh2} we get (ii)$\Longrightarrow $%
(i).
\end{proof}

\section{Nonuniform hyperbolicity on the boundary of Anosov diffeomorphisms}

As an example, we apply here the characterization of nonuniform
hyperbolicity and give sufficient conditions for a map on the boundary of
the set of Anosov diffeomorphisms to be (nonuniformly)\ hyperbolic.

Assume $f_{m}$ is a sequence of Anosov diffeomorphisms on $M={{{{\mathbb{R}}}%
}}^{d}/{{{{\mathbb{Z}}}}}^{d}$. Let $f$ be a diffeomorphism on $M$ with an
ergodic measure $\mu $. Assume $A_{m}$ is an increasing sequence of open
sets, $A=\bigcup_{m=1}^{\infty }A_{m}$, and suppose $A$ is $f$-invariant. We
denote the uniform hyperbolic coefficients of $f_{m}$ by $c_{m},\lambda _{m}$
(as in the definition) and $a_{m}$ (the bound on the angle between $E^{u}$
and $E^{s}$ as in (\ref{r:cangle})).

We assume $f_{m}$ converges to $f$ in the following sense:

(i) The functions $f_{m}$ and $f$ coincide on $A_{m}$ (hence also $Df_{m}$
and $Df$ coincide on $A_{m}$),

(ii)\ There is an uniform bound $b>0$ on $|Df_{m}|$ for all $m$ and on $|Df|$%
,

(iii)\ For all $m$, $\lambda _{m}\leq \lambda $ for some constant $\lambda
<1 $,

(iv) $\mu (A)=1$.

We can interpret these conditions as slowing down a uniformly hyperbolic
diffeomorphism (or perturbing it in another way to destroy uniformity)\ on a
set $A^{c}$ of measure $0$.

\begin{theorem}
\label{t:anosov}Assume $f_{m}$ is a sequence of Anosov diffeomorphisms
converging pointwise to $f$ on the set $A$, satisfying all of the above. If%
\begin{equation*}
\lim_{m\rightarrow \infty }\mu (A_{m}^{c})\log (a_{m}c_{m})=0,
\end{equation*}%
then $f$ is (nonuniformly)\ hyperbolic.
\end{theorem}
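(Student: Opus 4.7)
The plan is to verify condition (iii) of Theorem~\ref{t:mainhyp} for $\mu$-a.e. $x$, i.e.\ that $\Gamma_{o(x)}$ has a continuous inverse on $\mathcal{N}$. The strategy combines the uniform $f_m$-Anosov structure for large $m$ with Birkhoff's ergodic theorem, the quantitative input being that $p_m:=\mu(A_m^c)$ and $p_m\log(a_mc_m)\to 0$. Since $\Gamma_{o(x)}$ is already a continuous linear operator between Fr\'{e}chet spaces, I would establish bijectivity on $\mathcal{N}$ and then invoke the Open Mapping Theorem to upgrade this to continuity of the inverse; Theorem~\ref{t:mainhyp}(iii) would then give hyperbolicity of $\mu$.

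For injectivity, suppose $\Gamma_{o(x)}\eta=0$ with $\eta\in\mathcal{N}$, so $\eta_k=Df^k(x_0)\eta_0$ has sub-exponential growth. I would decompose $\eta_0=\eta_0^s+\eta_0^u$ along the globally defined $f_m$-splitting $E_m^s(x_0)\oplus E_m^u(x_0)$ (projection norms $\leq a_m$) and track the stable- and unstable-component norms of the backward iterates $\eta_{-k}$. At good sites $x_{k-1}\in A_m$ one has $Df=Df_m$ preserving the splitting with the standard $c_m,\lambda_m$ estimates, while at bad sites $x_{k-1}\in A_m^c$ the splitting may mix but with single-step distortion at most $a_mb$. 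Birkhoff's theorem applied to $\mathbf{1}_{A_m^c}$ gives density $p_m$ of bad sites along the orbit, so a good-run/bad-run bookkeeping over the window $[-k,0]$ yields, whenever $\eta_0^s\neq 0$,
\[
\liminf_{k\to\infty}\tfrac{1}{k}\log|\eta_{-k}| \;\geq\; \log(1/\lambda_m) - p_m\log(c_m a_m b/\lambda_m).
\]
By the hypothesis $p_m\log(a_mc_m)\to 0$ together with the fixed bound $b$ and $\lambda_m\leq\lambda<1$, the right-hand side tends to $\log(1/\lambda)>0$ as $m\to\infty$. Thus for large enough $m$ one obtains exponential backward growth of $\eta_{-k}$, contradicting $\eta\in\mathcal{N}$. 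A symmetric forward argument forces $\eta_0^u=0$, hence $\eta=0$.

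For surjectivity, given $\eta\in\mathcal{N}$ I would mimic the inverse construction from Proposition~\ref{p:uh1} but with a projection correction. Introduce the modified cocycle $\tilde D_k := P_m^s(x_k)Df(x_{k-1})P_m^s(x_{k-1}) + P_m^u(x_k)Df(x_{k-1})P_m^u(x_{k-1})$, which preserves the $f_m$-splitting along the $f$-orbit by construction; the associated operator $\tilde\Gamma$ then admits telescoping inverse formulas whose convergence is controlled, via the same good-run/bad-run bookkeeping used for injectivity, by an effective contraction rate $\bar\rho_m\to\log(1/\lambda)>0$. Consequently $\tilde\Gamma^{-1}$ maps $\mathcal{N}$ continuously into itself. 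The discrepancy $R:=\tilde\Gamma-\Gamma_{o(x)}$ is supported on the bad-site coordinates with per-site norm $\lesssim a_m^2 b$, and the Neumann expansion $\Gamma_{o(x)}^{-1}=\sum_{j\geq 0}(\tilde\Gamma^{-1}R)^j\tilde\Gamma^{-1}$ converges in $\mathcal{N}$ because the density $p_m$ of bad sites, together with $p_m\log(a_mc_m)\to 0$, forces $\tilde\Gamma^{-1}R$ to be contracting in the appropriate graded norm.

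The main obstacle I expect is precisely this surjectivity step: the $f_m$-splitting is not $f$-invariant at bad sites, so the clean formula from Proposition~\ref{p:uh1} breaks down and one must simultaneously control (i) the modified inverse $\tilde\Gamma^{-1}$ in the graded norms $\|\cdot\|_n$ and (ii) the Neumann series for $R$, with delicate bookkeeping of how the constants $c_m,a_m,b,\lambda_m$ accumulate across good and bad runs; the quantitative hypothesis $\mu(A_m^c)\log(a_mc_m)\to 0$ is exactly calibrated to defeat all of these losses in the $m\to\infty$ limit. Once bijectivity of $\Gamma_{o(x)}$ on $\mathcal{N}$ is established, the Open Mapping Theorem for Fr\'{e}chet spaces automatically supplies a continuous inverse, and Theorem~\ref{t:mainhyp}(iii) concludes that $\mu$ is hyperbolic.
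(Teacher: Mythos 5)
Your proposal takes a genuinely different route from the paper, and while the high-level plan (verify Theorem~\ref{t:mainhyp} for a.e.\ $x$, then apply the Open Mapping Theorem) is sound, the two substantive steps as sketched have gaps that the paper's actual argument is designed to avoid.

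The paper does not attempt a direct Lyapunov-type bookkeeping of good and bad sites. Instead, it proves a soft Fr\'{e}chet-space criterion (Proposition~\ref{p:frechet} and Corollary~\ref{c:nicechar}): it suffices to produce, for infinitely many $n$ and all small $\delta$, operators $A_{n,\delta},B_{n,\delta}$ with $\|I-A_{n,\delta}\Gamma_{o(x)}\|_{2n,n}<\delta$ and $\|I-\Gamma_{o(x)}B_{n,\delta}\|_{2n,n}<\delta$. The crucial point is the \emph{mixed} operator norm $\|\cdot\|_{2n,n}:X_{2n}\to X_n$, which exponentially discounts matrix entries at indices $|k|\geq j$. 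The approximate inverse is simply $\Upsilon^{(m)}$, the exact inverse of $\Gamma_{o(x)}$ computed relative to $f_m$ (whose existence and bound $\|\Upsilon^{(m)}\|_{n,n}\lesssim a_mc_m$ come from Proposition~\ref{p:uh1}). Since $f\equiv f_m$ on $A_m$, the error $\Gamma-\Gamma^{(m)}$ vanishes on any window $[-j,j]$ where the orbit stays in $A_m$; the mixed norm then makes $\|\Gamma-\Gamma^{(m)}\|_{2n,n}\lesssim\exp(-j/2n)$. Lemma~\ref{l:pe}, a Birkhoff argument, guarantees for a.e.\ $x$ infinitely many such windows with $j\gtrsim\varepsilon/\mu(A_m^c)$, and the hypothesis $\mu(A_m^c)\log(a_mc_m)\to0$ is exactly what makes the product $\|\Upsilon^{(m)}\|_{n,n}\cdot\|\Gamma-\Gamma^{(m)}\|_{2n,n}$ smaller than $\delta$.

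Your injectivity step has a concrete difficulty you do not address: the $f_m$-splitting is not $Df$-invariant at bad sites, so the ``stable component of $\eta_{-k}$'' is not propagated by the cocycle; at a single bad step the projected stable component can cancel entirely (the unstable piece of $\eta_{-k}$ may project onto $-$ the stable piece after one application of $Df^{-1}$). Once the stable component vanishes, the backward trajectory may contract, and the claimed lower bound $\liminf\frac1k\log|\eta_{-k}|\geq\log(1/\lambda_m)-p_m\log(c_ma_mb/\lambda_m)$ fails. Handling this requires cone arguments or a genuinely different bound; the multiplicative bookkeeping per bad site is an \emph{upper} bound on distortion, not the lower bound you need.

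Your surjectivity step has a sharper gap. The perturbation $R=\tilde\Gamma-\Gamma_{o(x)}$ is supported on bad coordinates with per-entry size $\sim a_m^2b$. In any single graded norm $\|\cdot\|_n$ (a weighted sup), $\|R\|_n$ is of order $a_m^2b$, \emph{independent} of the density $p_m$ --- a sup-norm does not see sparsity. Combined with $\|\tilde\Gamma^{-1}\|_n\gtrsim a_mc_m$, the Neumann series factor $\|\tilde\Gamma^{-1}R\|_n$ is large, and the series does not converge. What saves the day in the paper is not that bad sites are rare, but that for a.e.\ $x$ they are \emph{far from the origin} on arbitrarily long windows, and that the mixed norm $\|\cdot\|_{2n,n}$ is precisely built to exploit position rather than density. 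This is also the mechanism that explains the form of the hypothesis $\mu(A_m^c)\log(a_mc_m)\to0$: it trades the window length $j\sim\varepsilon/\mu(A_m^c)$ against the blowup $a_mc_m$ of $\Upsilon^{(m)}$, yielding (\ref{r:relE}). Without this mechanism your plan, as written, cannot close.

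Finally, two smaller points. You can save a step by invoking Theorem~\ref{t:mainhyp}(ii) directly (bijectivity on $\mathcal{N}$ already implies hyperbolicity via Lemmas~\ref{l:nh1}--\ref{l:nh2}); the appeal to the Open Mapping Theorem is valid but redundant. And the modified cocycle $\tilde D_k$ you propose need not be invertible at bad sites (e.g.\ if $Df(x_{k-1})$ maps part of $E_m^s(x_{k-1})$ into $E_m^u(x_k)$), so the claim that $\tilde\Gamma$ admits an inverse with controlled norm needs to be checked, not assumed.
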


One can show that these conditions are satisfied, for example, in the case
of Katok's construction \cite{Katok:79} of a nonuniformly hyperbolic
diffeomorphism as a slowed down Arnold cat-map in a neighborhood of the
fixed point $0$.

Prior to proving the Theorem, we develop another tool based on the previous
section for showing that a diffeomorphism is nonuniformly hyperbolic. Recall
that for Banach spaces, a continuous linear operator $\Gamma $ is invertible
if there exist operators $A,B$ and $0\leq \lambda <1$ such that $||I-A\Gamma
||\leq \lambda $, $||I-\Gamma B||\leq \lambda $ ($A,B$ are approximate left
and right inverses of $\Gamma $; this was used for example in the proof of
Proposition \ref{p:uh2}). We will need an analogue of this for Fr\'{e}chet
spaces:

\begin{proposition}
\label{p:frechet}Assume $\Gamma $ is a continuous, linear operator on a Fr%
\'{e}chet space $\mathcal{F}$. Assume there exists two families $V_{n}$, $%
U_{n}$, $n\in \boldsymbol{N}$, of neighborhoods of $0$ such that $\{\delta
U_{n}$, $n\in \boldsymbol{N}$, $\delta >0\}$ is a local base. If there exist
families of continuous operators $A_{n,\delta }$, $B_{n,\delta }$ on $%
\mathcal{F}$ such that for all $n\in \boldsymbol{N}$, $\delta >0$%
\begin{eqnarray}
x &\in &V_{n}\text{ }\Rightarrow (I-A_{n,\delta }\Gamma )x\in \delta U_{n}%
\text{,}  \label{r:left} \\
x &\in &V_{n}\text{ }\Rightarrow (I-\Gamma B_{n,\delta })x\in \delta U_{n}%
\text{,}  \label{r:right}
\end{eqnarray}%
then $\Gamma $ is bijective with a continuous inverse.
\end{proposition}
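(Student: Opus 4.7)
The strategy is to show that $\Gamma$ is a continuous linear bijection and then invoke the Open Mapping Theorem (Banach--Schauder) for Fr\'{e}chet spaces to conclude that $\Gamma^{-1}$ is continuous.

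For injectivity, suppose $\Gamma x = 0$. For each $n$, since $V_n$ is a neighborhood of $0$ and hence absorbing, choose $\mu_n > 0$ with $\mu_n x \in V_n$. The left-inverse hypothesis gives $\mu_n x = (I - A_{n,\delta}\Gamma)(\mu_n x) \in \delta U_n$ for every $\delta > 0$ (the first equality using $\Gamma x = 0$). Rescaling, $x \in \epsilon U_n$ for every $\epsilon > 0$ and every $n$. Since $\{\delta U_n\}$ is a local base at $0$ and $\mathcal{F}$ is Hausdorff, $x$ lies in every neighborhood of $0$, so $x = 0$.

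For surjectivity, given $y \in \mathcal{F}$ I would construct $x$ with $\Gamma x = y$ by an iterative scheme analogous to the Neumann series, carried out along a descending sequence of neighborhoods. Fix a decreasing balanced Fr\'{e}chet local base $\{W_k\}$ with $\bigcap_k W_k = \{0\}$, together with a second sequence $\{\tilde W_k\}$ chosen so that $x_k \in \tilde W_k$ for all $k$ forces $\sum x_k$ to converge absolutely in every generating semi-norm (for instance $\tilde W_k = \{x : p_j(x) \leq 2^{-k} \text{ for } j \leq k\}$ in a defining family of semi-norms $p_j$). Set $y_0 := y$. At step $k$, use that $\{\delta U_n\}$ is a local base to pick $n_k$ and $\delta_k$ with $\delta_k U_{n_k} \subseteq W_k$; then by continuity of $B_{n_k,\delta_k}$ the set $E_k := V_{n_k} \cap B_{n_k,\delta_k}^{-1}(\tilde W_k)$ is a neighborhood of $0$. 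I would arrange in advance, by thinning the base $\{W_k\}$ inductively, that $W_{k-1} \subseteq E_k$. Then $y_{k-1} \in W_{k-1} \subseteq V_{n_k}$, and putting $x_k := B_{n_k,\delta_k} y_{k-1}$ the right-inverse hypothesis yields $y_k := y_{k-1} - \Gamma x_k \in \delta_k U_{n_k} \subseteq W_k$, while $x_k \in \tilde W_k$ by the choice of $E_k$. An initial scaling $\mu y \in V_{n_1}$ handles the base case $k=1$, since solving $\Gamma x' = \mu y$ and setting $x = x'/\mu$ solves for $y$. The residuals $y_k \to 0$ and $\sum x_k$ converges absolutely in every semi-norm, so by continuity of $\Gamma$ the limit $x := \sum x_k$ satisfies $\Gamma x = y$.

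Finally, the Open Mapping Theorem for Fr\'{e}chet spaces gives continuity of $\Gamma^{-1}$. The main obstacle is the bookkeeping in the surjectivity step: the hypothesis ties together the indices of the $V$- and $U$-neighborhoods, so $n_k$ is essentially forced by the target $W_k$ and one cannot independently control which $U_n$ bounds the residual. The local base $\{W_k\}$ must therefore be thinned adaptively---based on the continuity moduli of the specific operators $B_{n_k,\delta_k}$ at hand and on the shape of $V_{n_k}$---so that shrinking $y_{k-1}$ automatically places it both inside $V_{n_k}$ and inside the preimage $B_{n_k,\delta_k}^{-1}(\tilde W_k)$.
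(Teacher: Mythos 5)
Your injectivity argument is correct, and in fact slightly cleaner than the paper's: you scale $x$ into every $V_n$ and conclude $x\in\epsilon U_n$ for all $\epsilon>0$ and all $n$, which pins $x$ to every member of the given local base and hence to $\{0\}$.

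For surjectivity, however, you take a genuinely different route from the paper, and your route has a gap that you yourself flag but do not close. The paper sidesteps iteration entirely: it shows that $\Gamma(\mathcal{F})$ is dense (the same density computation you have, $\Gamma B_{n,\delta}(y)\in y-\tfrac{\delta}{\varepsilon}U_n$) and then invokes the Open Mapping Theorem in the Rudin form (Theorem 2.11 of \cite{Rudin:91}), whose hypothesis is only that the image be of the second category; surjectivity and openness then both come out of the theorem. There is no telescoping sum and no summability bookkeeping in the paper's argument. Your proposal instead tries to build $x=\sum_k x_k$ directly with $x_k=B_{n_k,\delta_k}y_{k-1}$, and this is where the circularity sits. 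At step $k$ you need $y_{k-1}\in E_k=V_{n_k}\cap B_{n_k,\delta_k}^{-1}(\tilde W_k)$, so the neighborhood $W_{k-1}$ in which you confine $y_{k-1}$ must be chosen \emph{after} you know $n_k$ and $\delta_k$; but $n_k,\delta_k$ are picked precisely to make $\delta_kU_{n_k}\subseteq W_k$, and $W_k$ comes \emph{after} $W_{k-1}$ in your indexing. So the chain of requirements $W_{k-1}\Leftarrow(n_k,\delta_k)\Leftarrow W_k\Leftarrow(n_{k+1},\delta_{k+1})\Leftarrow\cdots$ is an infinite forward recursion with no base case, and saying one will ``thin $\{W_k\}$ adaptively'' does not resolve it: the modulus of continuity of $B_{n_k,\delta_k}$ can degrade as $\delta_k\to 0$, so the requirement on $\delta_{k-1}$ genuinely depends on the as-yet-undetermined $\delta_k$. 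One cannot fix $\delta_k$ first either, since then there is no guarantee that $\delta_kU_{n_k}$ shrinks to $0$, because the hypothesis only says $\{\delta U_n:n,\delta\}$ is a local base, not that $U_n\to\{0\}$ along any fixed subsequence, and not that $\delta U_n\to\{0\}$ for any fixed $n$. Unless you can break this dependency (for instance by importing a uniform boundedness assumption on the $B_{n,\delta}$, which the statement does not give you), the construction of a convergent $\sum x_k$ is not established. I would recommend either closing this loop explicitly or adopting the paper's route: prove density as you have, and then apply the category form of the Open Mapping Theorem rather than building the preimage by hand.
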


\begin{proof}
Assume first $\Gamma $ is not injective, and let $\Gamma y=0$ for some $%
y\not=0$. We can find $\varepsilon ,\delta >0$ small enough so that $%
\varepsilon y\in V_{1}$ and $\varepsilon y\not\in \delta U_{n}$, which
contradicts (\ref{r:left}). We now show the image of $\Gamma $ is
second-countable in $\mathcal{F}$, by showing that each open set in $%
\mathcal{F}$ contains uncountably many elements of $\Gamma (\mathcal{F)}$.
Choose any $y\in \mathcal{F}$, find $\varepsilon >0$ so that $\varepsilon
y\in V_{n}$, and then by (\ref{r:right}) $\Gamma (B_{n,\delta }(y))\in
y-\delta /\varepsilon U_{n}$. As $\{\delta U_{n}\}$ is a local base, each
neighborhood of $y$ contains an element $z\in \Gamma (\mathcal{F)}$. But
then each neighborhood of $y$ contains uncountably many elements of $\Gamma (%
\mathcal{F)}$ (i.e. $z$ multiplied with a small open interval around $1$).

As $\Gamma (\mathcal{F)}$ is second countable in $\mathcal{F}$, we apply the
Open mapping theorem as stated in \cite{Rudin:91}, Theorem 2.11, and deduce
that $\Gamma (\mathcal{F)=F}$ and that $\Gamma $ is an open map.
\end{proof}

\begin{corollary}
\label{c:nicechar}Let $f$ be a diffeomorphism on $M$ and $\mu $ an ergodic $%
f $-invariant Borel probability measure. Assume that for $\mu $-a.e. $x\in
M, $ there exist families of continuous operators $A_{n,\delta }$, $%
B_{n,\delta }$ on ${{{{\mathcal{N}}}}}$ so that for infinitely many positive
integers $n$ and all sufficiently small $\delta >0$, 
\begin{eqnarray}
||I-A_{n,\delta }\Gamma _{o(x)}||_{2n,n} &<&\delta ,  \label{r:leftc} \\
||I-\Gamma _{o(x)}B_{n,\delta }||_{2n,n} &<&\delta .  \label{r:rightc}
\end{eqnarray}%
Then $f$ is nonuniformly hyperbolic.
\end{corollary}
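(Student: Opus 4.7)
My plan is to deduce, for $\mu$-a.e.\ $x\in M$, that $\Gamma_{o(x)}$ is bijective with continuous inverse on $\mathcal{N}$ by invoking Proposition \ref{p:frechet}, and then to conclude nonuniform hyperbolicity of $\mu$ via the implication (iii)$\Longrightarrow$(i) of Theorem \ref{t:mainhyp}. Fix a $\mu$-full measure set of $x$ for which the hypotheses (\ref{r:leftc})--(\ref{r:rightc}) are available, and let $\{n_k\}_{k\in\mathbb{N}}$ be an infinite sequence of indices on which the approximate inverses $A_{n_k,\delta},B_{n_k,\delta}$ are provided.

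In the Fr\'echet space $\mathcal{N}$ I take
\[
V_k := \{\boldsymbol{\eta}\in \mathcal{N} : ||\boldsymbol{\eta}||_{2n_k} \leq 1\}, \qquad U_k := \{\boldsymbol{\eta}\in \mathcal{N} : ||\boldsymbol{\eta}||_{n_k} \leq 1\}.
\]
Both are neighborhoods of $0$ in $\mathcal{N}$ since the topology is generated by the seminorms $||\cdot||_n$. Because the family $||\cdot||_n$ is graded and $n_k\to\infty$, the collection $\{\delta U_k : k\in\mathbb{N},\ \delta>0\}$ is a local base at $0$: for any $n$ and any $\delta>0$ pick $k$ with $n_k\geq n$, and then $\{||\boldsymbol{\eta}||_{n_k}<\delta\}\subseteq\{||\boldsymbol{\eta}||_n<\delta\}$. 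This verifies the local-base hypothesis of Proposition \ref{p:frechet}.

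Next I translate (\ref{r:leftc})--(\ref{r:rightc}) into the neighborhood language of Proposition \ref{p:frechet}. The notation $||\cdot||_{2n,n}$ is the operator norm $X_{2n}\to X_n$, so $||I-A_{n_k,\delta}\Gamma_{o(x)}||_{2n_k,n_k}<\delta$ says exactly that $\boldsymbol{\eta}\in V_k$ implies $(I-A_{n_k,\delta}\Gamma_{o(x)})\boldsymbol{\eta}\in\delta U_k$, giving (\ref{r:left}); an identical reading of (\ref{r:rightc}) gives (\ref{r:right}). Since $A_{n,\delta},B_{n,\delta}$ are continuous on $\mathcal{N}$ by hypothesis, Proposition \ref{p:frechet} applies and delivers that $\Gamma_{o(x)}$ is bijective with continuous inverse on $\mathcal{N}$. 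Condition (iii) of Theorem \ref{t:mainhyp} then holds $\mu$-a.e., and the theorem yields that $\mu$ is hyperbolic, i.e.\ $f$ is nonuniformly hyperbolic.

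The main conceptual step, and the only nontrivial point in the plan, is the interplay between the two norm scales $||\cdot||_{2n}$ and $||\cdot||_n$. The factor $2$ in $||\cdot||_{2n,n}$ is precisely what makes $V_k\subsetneq U_k$ a genuinely strict inclusion of neighborhoods, which in turn is what allows the approximate inverses $A_{n,\delta},B_{n,\delta}$ to be promoted to an exact inverse by the Fr\'echet-space Open Mapping argument embedded in Proposition \ref{p:frechet}. Everything else reduces to bookkeeping, because the heavy lifting (the characterization of hyperbolicity via inverses on $\mathcal{N}$, and the Open Mapping-based approximate-inverse lemma) is already in place.
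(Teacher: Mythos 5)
Your proposal is correct and follows the same two-step route as the paper: it invokes Proposition \ref{p:frechet} with $V_k$, $U_k$ taken to be the (semi)norm balls of $||\cdot||_{2n_k}$ and $||\cdot||_{n_k}$ respectively, and then concludes via the implication (iii)$\Rightarrow$(i) of Theorem \ref{t:mainhyp}. The only difference is that you supply the bookkeeping the paper leaves implicit (verifying the local-base property from gradedness of the norms and the divergence $n_k\to\infty$, and unpacking the operator-norm bounds into the neighborhood conditions of Proposition \ref{p:frechet}); this is correct and does not change the argument.
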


\begin{proof}
By applying Proposition \ref{p:frechet} with $V_{n},U_{n}$ being the sets $%
||.||_{2n}<1$, $||.||_{n}<1$ respectively, we see that for $\mu $-a.e. $x\in
M$, $\Gamma _{o(x)}$ is invertible in $\mathcal{N}$ with a continuous
inverse. It now suffices to apply Theorem \ref{t:mainhyp}.
\end{proof}

We now construct a set $P\subseteq M$ of full measure so that, as we will
see later, $\Gamma _{o(x)}$ is invertible for $x\in P$. We rely for now only
on ergodic-theoretical arguments and the fact that $A_{m}\nearrow A$, $\mu
(A)=1$.

Let $P_{\varepsilon }$, $\varepsilon >0$, be the set of all $x\in M$ so that
there are infinitely many indices $m_{1}<m_{2}<...<m_{i}<...$ and integers $%
j_{i}\geq \varepsilon /(1-\mu (A_{m_{i}}))$ such that%
\begin{equation}
f^{-j_{i}}(x)\in A_{m_{i}},\text{ }f^{-j_{i}+1}(x)\in
A_{m_{i}},...,f^{j_{i}}(x)\in A_{m_{i}}.  \label{r:in}
\end{equation}

\begin{lemma}
\label{l:pe}Assuming all of the above, $\mu (P_{\varepsilon })\geq
1-4\varepsilon $.
\end{lemma}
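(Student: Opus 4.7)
The plan is to construct, for each large $m$, a set $C_m \subseteq M$ consisting of points whose $(2j_m+1)$-step orbit around the origin stays entirely in $A_m$, estimate $\mu(C_m^c)$ via $f$-invariance of $\mu$ and a union bound, and then invoke a reverse Fatou argument to bound $\mu(\limsup_m C_m)$ from below.

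Concretely, for each $m$ with $\mu(A_m) < 1$ set $j_m = \lceil \varepsilon/(1-\mu(A_m)) \rceil$ and
\[
C_m = \bigcap_{k=-j_m}^{j_m} f^{-k}(A_m).
\]
By $f$-invariance of $\mu$ and a union bound,
\[
\mu(C_m^c) = \mu\Bigl(\bigcup_{k=-j_m}^{j_m} f^{-k}(A_m^c)\Bigr) \leq (2j_m+1)\bigl(1-\mu(A_m)\bigr) \leq 2\varepsilon + 3\bigl(1-\mu(A_m)\bigr),
\]
where the last step uses $\lceil t \rceil \leq t+1$. Since $\mu(A_m) \to 1$, this yields $\liminf_m \mu(C_m) \geq 1 - 2\varepsilon$.

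Every $x \in C_m$ satisfies condition~(\ref{r:in}) with the admissible choice $j_i = j_m$ and index $m_i = m$, so $\limsup_m C_m \subseteq P_\varepsilon$. Applying the reverse Fatou lemma on the probability space $(M,\mu)$ to the indicators of $C_m$,
\[
\mu(P_\varepsilon) \geq \mu\bigl(\limsup_m C_m\bigr) \geq \limsup_m \mu(C_m) \geq 1 - 2\varepsilon \geq 1 - 4\varepsilon.
\]

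I do not anticipate a real obstacle here; the argument is essentially a bookkeeping exercise. The only subtle points are the ceiling in the definition of $j_m$ (which produces the harmless error term $3(1-\mu(A_m))$ vanishing in the limit), and the observation that "belonging to $C_m$ for infinitely many $m$" is precisely the defining condition of $P_\varepsilon$ once the sequence $(m_i)$ is identified with $\{m : x \in C_m\}$. Note in passing that this approach in fact yields the sharper bound $1-2\varepsilon$, so the stated $1 - 4\varepsilon$ leaves some slack.
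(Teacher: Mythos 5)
Your proof is correct, and it takes a genuinely simpler route than the paper's. Where the paper invokes the Birkhoff ergodic theorem twice to establish the estimate $\mu(P_{j,m}) \geq 1 - (2j+1)(1-\mu(A_m))$, you observe that the same bound (with $P_{j,m}$ being your $C_m$) follows directly from $f$-invariance of $\mu$ and a union bound over the $2j_m+1$ preimages $f^{-k}(A_m^c)$, with no ergodic theory at all. You also make explicit the ``simple measure-theoretical argument'' that the paper merely alludes to at the end, identifying it as reverse Fatou applied to indicators, i.e.\ $\mu(\limsup_m C_m) \geq \limsup_m \mu(C_m)$. Your ceiling choice $j_m = \lceil \varepsilon/(1-\mu(A_m)) \rceil$ is cleaner than the paper's requirement that $2j_m+1$ be an odd integer lying in $[\,2\varepsilon/(1-\mu(A_m))+1,\ 4\varepsilon/(1-\mu(A_m))\,]$, and gives the sharper constant $1-2\varepsilon$; the paper's looser choice is what produces the stated $1-4\varepsilon$. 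Both proofs implicitly assume $\mu(A_m) < 1$ for all (large) $m$, which is needed for $P_\varepsilon$ to be nontrivially defined in the first place, so that is not a gap specific to either argument. Overall, nothing is missing from your argument; it is a genuine streamlining of the paper's proof.
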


\begin{proof}
We set $P_{j,m}$ to be the set of all $x\in M$ so that $f^{-j}(x)\in
A_{m},...,f^{j}(x)\in A_{m}$. By the Birkhoff ergodic theorem, for $\mu $%
-a.e. $x\in M$,%
\begin{equation}
\sum_{i=1}^{n}\frac{1}{n}\mathbf{1}_{P_{j,m}}(f^{i}(x))\rightarrow \mu
(P_{j,m})  \label{r:hopa1}
\end{equation}%
as $n\rightarrow \infty $. Also by the Birkhoff ergodic theorem, for $\mu $%
-a.e. $x\in M$, any $\delta >0$ and $n$ large enough, $%
f(x),f^{2}(x),...,f^{n}(x)$ is in $A_{m}^{c}$ at most $n(1-\mu
(A_{m})+\delta )$ times, hence for the same set $\mathbf{1}_{P_{j,m}}$ has
value $0$ at most%
\begin{equation}
(2j+1)n(1-\mu (A_{m})+\delta )+2j  \label{r:hopa2}
\end{equation}
times (otherwise is $1$). From (\ref{r:hopa1}), (\ref{r:hopa2}), and as $%
\delta >0$ is arbitrary, we get%
\begin{equation*}
\mu (P_{j,m})\geq 1-(2j+1)(1-\mu (A_{m}))\text{.}
\end{equation*}%
Without loss of generality assume $m$ is large enough so that there is an
odd integer $2j_{m}+1$,%
\begin{equation*}
\frac{2\varepsilon }{1-\mu (A_{m})}+1\leq 2j_{m}+1\leq \frac{4\varepsilon }{%
1-\mu (A_{m})}\text{.}
\end{equation*}

We get then $\mu (P_{j_{m},m})\geq 1-4\varepsilon $ and $j_{m}\geq
\varepsilon /(1-\mu (A_{m}))$. It is a simple measure-theoretical argument
to show that there is a measurable set $P_{\varepsilon }$, $\mu
(P_{\varepsilon })\geq 1-4\varepsilon $, so that each $x\in M$ is in
infinitely many $P_{j_{m},m}$.
\end{proof}

We set $P=\bigcup_{\varepsilon >0}$ $P_{\varepsilon }$. The set $P$ can be
interpreted as roughly the set of Birkhoff-regular points with respect to
the increasing family of sets $A_{m}$. For example, in the case of Katok's
perturbed cat-map \cite{Katok:79}, $P$ is indeed the entire $M$ with the
exception of $0$ and the stable and unstable manifolds of $0$, as in Katok's
construction.

We now complete the proof of Theorem \ref{t:anosov}.

\begin{proof}
Assume $x\in P_{\varepsilon }$ for some $\varepsilon >0$, where $%
P_{\varepsilon }$ as above and by Lemma \ref{l:pe}, $\mu (P_{\varepsilon
})\geq 1-4\varepsilon $. To show that $\Gamma _{o(x)}$ is invertible, we
will apply Corollary \ref{c:nicechar}. For that, we construct $A_{n,\delta }$%
, $B_{n,\delta }$ for given sufficiently large $n\in \boldsymbol{N}$ and
sufficiently small $\delta >0$ satisfying\ (\ref{r:leftc}) and (\ref%
{r:rightc}).

Assume $m_{i}$ is a sequence of indices for which (\ref{r:in}) holds, and 
\begin{equation}
j_{i}\geq \varepsilon /(1-\mu (A_{m_{i}})).  \label{r:relA}
\end{equation}%
For now we will drop the subscript $i$ (as it will be sufficient that $m$ is
large enough). Denote by $\Gamma =\Gamma _{o(x)}$, $\Gamma ^{(m)}=\Gamma
_{o(x)}$ the orbits with respect to respectively $f$, $f_{m}$. As $f_{m}$ is
uniformly hyperbolic, Proposition \ref{p:uh1} implies that $\Gamma ^{(m)}$
has an inverse $\Upsilon ^{(m)}$ which is a well defined operator on $X_{n}$
for all positive integers $n$, therefore also on $\mathcal{N}$. Choose $%
n_{0} $ large enough so that $\lambda \exp (1/n_{0})<1$. Then for any $n\geq
n_{0}$ the relation (\ref{r:h6}) implies that%
\begin{equation}
||\Upsilon ^{(m)}||_{n,n}\leq a_{m}c_{m}\frac{1+\lambda _{m}\exp (1/n)}{%
1-\lambda _{m}\exp (1/n)}\leq \frac{4a_{m}c_{m}}{1-\lambda \exp (1/n_{0})}%
\text{,}  \label{r:relB}
\end{equation}%
As orbits and tangent orbits of $x$ with respect to $f$ and $f_{m}$ coincide
for iterations $-j,-j+1,...,j$ for some $j$ satisfying (\ref{r:relA}), by
applying (\ref{r:50}) from Appendix and (\ref{r:relA}) we get for $n\geq
n_{0}$%
\begin{eqnarray}
||\Gamma -\Gamma ^{(m)}||_{2n,n} &\leq &\sup_{|k|\geq j+1}\exp \left( \frac{%
-|k+1|}{n}+\frac{|k|}{2n}\right) |Df_{m}(f_{m}^{k}(x))-Df(f^{k}(x))|\leq 
\notag \\
&\leq &2b\exp \left( \frac{-j+1}{2n}\right) \leq 2b\exp \left( \frac{%
-\varepsilon }{2n\mu (A_{m}^{c})}+\frac{1}{2n_{0}}\right) .  \label{r:relC}
\end{eqnarray}

We now look for sufficient conditions for%
\begin{eqnarray}
||\Upsilon ^{(m)}||_{n,n}||\Gamma -\Gamma ^{(m)}||_{2n,n} &<&\delta ,
\label{r:relD} \\
||\Gamma -\Gamma ^{(m)}||_{2n,n}||\Upsilon ^{(m)}||_{2n,2n} &<&\delta 
\label{r:relD1}
\end{eqnarray}%
to hold. Let $c$ be the constant%
\begin{equation*}
c=\log 2b+\log \left( \frac{4}{1-\lambda \exp (1/n_{0})}\right) +\frac{1}{%
2n_{0}}\text{.}
\end{equation*}%
Then from (\ref{r:relB}) and (\ref{r:relC})\ one deduces that%
\begin{equation}
\mu (A_{m}^{c})\log a_{m}c_{m}+c\mu (A_{m}^{c})+\mu (A_{m}^{c})\log \frac{1}{%
\delta }<\frac{\varepsilon }{2n}  \label{r:relE}
\end{equation}%
implies (\ref{r:relD}), (\ref{r:relD1}). As $\mu (A_{m}^{c})\log
a_{m}c_{m}\rightarrow 0$ and $\mu (A_{m}^{c})\rightarrow 0$, we can find
sufficiently large $m=m(n,\delta ,\varepsilon )$ such that (\ref{r:relE})
holds. From (\ref{r:relD}), (\ref{r:relD1}) we deduce that $A_{n,\delta
}=B_{n,\delta }:=\Upsilon ^{(m)}$ satisfies (\ref{r:leftc}), (\ref{r:rightc}%
), as required for Corollary \ref{c:nicechar} to hold.
\end{proof}

Perhaps the best explanation of the difference between uniform and
nonuniform hyperbolicity in this context is the relation (\ref{r:relC}).
Here we showed that the difference of two operators in the norm $%
||.||_{2n,n} $ can be very small if two orbits are only locally close. For
uniform hyperbolicity and the norm $||.||_{n,n}$ to be small, orbits of two
points would have to be uniformly close.

\section{Shadowing Lemma}

In this section we assume that $f$ is a $C^{1+\alpha }$ diffeomorphism on $M$%
, $\alpha >0$. Recall that $\boldsymbol{y}\in M^{{{{{\mathbb{Z}}}}}}$ is a $%
\beta $-pseudoorbit if for each $k\in {{{{\mathbb{Z}}}}}$, $%
|f(y_{k-1})-y_{k}|<\beta $. We now define precisely the notion of a
shadowable invariant set and measure, used in the statement of Theorem \ref%
{t:shadowing}.

We say that $\mathbf{x}\in M^{{{{{\mathbb{Z}}}}}}$ $\varepsilon $-shadows $%
\mathbf{y}\in M^{{{{{\mathbb{Z}}}}}}$, if for all $k$, $|x_{k}-y_{k}|<%
\varepsilon $.

\begin{definition}
We say that a set $\Lambda $ is shadowable, if there exists an increasing
sequence of sets $\Lambda _{k}$, $\tbigcup_{k=1}^{\infty }\Lambda
_{k}=\Lambda $, and a number $\delta >0$ depending on $\Lambda _{k}$, such
that the following holds: For each $\rho >0$ small enough there exists $%
\beta >0$, $\beta =\beta (\rho ,\Lambda _{k}$,$\delta )$, such that if $%
\boldsymbol{y}\in M^{{{{{\mathbb{Z}}}}}}$ is a $\beta $-pseudoorbit, $y_{j}$
in $\delta $-neighborhood of $\Lambda _{k}$ for all $j$ then there exists $%
x\in M$ such that $o(x)$ $\rho $-shadows $\boldsymbol{y}$.

An invariant measure is shadowable if there exists a shadowable set of full
measure.
\end{definition}

Uniformly hyperbolic sets are shadowable with $\Lambda _{k}=\Lambda $ (\cite%
{Katok:95}). Katok proved that nonuniformly hyperbolic measures are
shadowable (\cite{Katok:95}, Theorem S.4.14), where $\Lambda _{k}$ are
locally uniformly hyperbolic components of the Pesin set. Note that in the
nonuniformly hyperbolic case, as well as in our more general setting, $%
\Lambda _{k}$ are typically not invariant.

We now prove Theorem \ref{t:shadowing} in several steps. We fix now $n\in {{{%
{\mathbb{N}}}}}\cup \{\infty \}$ and the space $X_{n}$. Assume as in the
statement of Theorem \ref{t:shadowing} that for each $x\in \Lambda $, $%
\Gamma _{o(x)}$ has a continuous inverse $\Upsilon _{o(x)}$ acting on $X_{n}$%
. Let $\Lambda _{m}^{\ast }$ be the set of all $x\in \Lambda $ such that 
\begin{equation}
||\Upsilon _{o(x)}||_{n}\leq m{{{\text{.}}}}  \label{r:lambdam}
\end{equation}

The following Proposition is the key in the proof of Theorem \ref%
{t:shadowing}. In it we find $\delta $ small enough, construct the grading $%
\Lambda _{k}$ and show that the operator $\Gamma _{\boldsymbol{y}}$ is
invertible in $X_{\infty }$ for $\boldsymbol{y}$ being a $\delta $%
-pseudoorbit close enough to $\Lambda _{k}$. More precisely:

\begin{proposition}
\label{p:delta}Assume $m,n$ are positive integers, and let $\Lambda
_{m}^{\ast }$ be as defined by (\ref{r:lambdam}). Then $\Lambda _{m}^{\ast }$
can be decomposed into an increasing union of sets $\Lambda _{m}^{\ast
}=\tbigcup_{r=1}^{\infty }\Lambda _{m,r}$ such that the following holds:\
for any positive integer $r$ there exists $\delta >0$ (depending on $m,r$)
such that if $\boldsymbol{y}$ is a $\delta $-pseudoorbit, $y_{j}$ in a $%
\delta $-neighborhood of $\Lambda _{m,r}$ for all $j\in {{{{\mathbb{Z}}}}}$,
then $\Gamma _{\boldsymbol{y}}$ has a continuous inverse $\Theta $ in $%
X_{\infty }$, such that 
\begin{equation}
||\Theta ||_{\infty }\leq 4md^{3}/(1-\func{exp}(-1/n)){{{\text{.}}}}
\label{p:bnd}
\end{equation}
\end{proposition}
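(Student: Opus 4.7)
The plan is to build an inverse $\Theta$ of $\Gamma_{\boldsymbol{y}}$ on $X_\infty$ directly, by prescribing its matrix entries from the inverses $\Upsilon_{o(x^{(k)})}$ of orbit-operators for nearby points $x^{(k)} \in \Lambda_{m,r}$, and then closing a Neumann series. The stratification $\Lambda_{m,r}$ should be chosen so that this assignment $k \mapsto x^{(k)}$ and the associated operators are uniformly controlled.

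First I would construct the decomposition $\Lambda_m^* = \bigcup_r \Lambda_{m,r}$. Since the assignment $x \mapsto \Upsilon_{o(x)}$ from $\Lambda_m^*$ into the Banach space of matrix-represented operators on $X_n$ is Borel measurable, Lusin's theorem yields an increasing exhaustion by compact subsets $\Lambda_{m,r}$ on each of which this map is uniformly continuous; this will give a joint modulus of continuity needed to choose the $x^{(k)}$ coherently.

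Given $\boldsymbol{y}$ a $\delta$-pseudoorbit with each $y_k$ within $\delta$ of $\Lambda_{m,r}$, pick $x^{(k)} \in \Lambda_{m,r}$ with $|x^{(k)} - y_k| < \delta$, and define $\Theta$ on $X_\infty$ via its matrix representation $\Theta_{k,j} := (\Upsilon_{o(x^{(k)})})_{0,\,j-k}$. From the proof of Lemma \ref{l:uh3} applied to each $o(x^{(k)})$ (using $\|\Upsilon_{o(x^{(k)})}\|_n \leq m$), each block satisfies $|\Theta_{k,j}| \leq md\sqrt{d}\,\exp(-|j-k|/n)$, so summing the geometric series yields $\|\Theta\|_\infty \leq 2md\sqrt{d}/(1-\exp(-1/n))$. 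A direct matrix computation using $\Upsilon_{o(x^{(k)})} \Gamma_{o(x^{(k)})} = I$ at row $0$ then gives
\[
(\Theta\,\Gamma_{\boldsymbol{y}})_{k,l} = \delta_{k,l}\,I + \bigl(\Upsilon_{o(x^{(k)})}\bigr)_{0,\,l+1-k}\bigl(Df(f^{\,l-k}(x^{(k)})) - Df(y_l)\bigr),
\]
so the error $E := \Theta\Gamma_{\boldsymbol{y}} - I$ has matrix entries controlled by the $Df$-difference between corresponding points of the true orbit of $x^{(k)}$ and the pseudoorbit $\boldsymbol{y}$.

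The main obstacle is bounding $\|E\|_\infty$: the distance $|f^{\,l-k}(x^{(k)}) - y_l|$ grows exponentially in $|l-k|$ at rate $B := \max|Df|$, while the entries of $\Upsilon_{o(x^{(k)})}$ only decay at rate $\exp(-1/n)$, so a naive use of Hölder continuity of $Df$ fails whenever $B^\alpha \exp(-1/n) \geq 1$. I would resolve this by splitting the sum over $l$ at $N \sim C\log(1/\delta)$: in the near regime $|l-k| \leq N$, the standard pseudoorbit estimate gives $|f^{\,l-k}(x^{(k)}) - y_l| \lesssim \delta^{1/2}$, so the $C^{1+\alpha}$ regularity of $f$ contributes an error factor $\delta^{\alpha/2}$; in the far regime $|l-k| > N$, the bare exponential decay of $\Upsilon$'s entries contributes a factor $\delta^{c/n}$ for some $c > 0$. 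Both tend to zero with $\delta$, so $\delta = \delta(m,r)$ can be chosen small enough to achieve $\|E\|_\infty < 1/2$, and analogously for $E' := \Gamma_{\boldsymbol{y}}\Theta - I$. A Neumann series then gives $\Gamma_{\boldsymbol{y}}^{-1} = \Theta(I+E)^{-1}$, bounded by $2\|\Theta\|_\infty$, and absorbing constants yields \eqref{p:bnd}. This is exactly where the $C^{1+\alpha}$ hypothesis (rather than merely $C^1$) enters essentially.
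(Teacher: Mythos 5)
Your core analytic idea is sound and is in fact a genuine simplification of the paper's argument: by defining $\Theta_{k,j}=(\Upsilon_{o(x^{(k)})})_{0,j-k}$ with no damping and estimating directly in $X_\infty$, you avoid the $\lambda^{|j-i|}$ factor that the paper inserts. The paper needs that factor because it first estimates in $X_n$, where the weight $\exp(|j-i|/n)$ exactly cancels the $\exp(-|j-i|/n)$ decay of the $\Upsilon$-entries, so the row sums would otherwise fail to converge; in $X_\infty$ the weight is $1$ and the bare decay is enough. Your log-scale split of the row sum at $N\sim C\log(1/\delta)$ with $C<1/\log B$ is the correct way to beat the competition between the $B^{|l-k|}$ growth of $|f^{l-k}(x^{(k)})-y_l|$ and the $\exp(-|j-k|/n)$ decay, and your left-inverse error formula and resulting estimates check out.

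There are, however, two genuine gaps. First, the decomposition $\Lambda_m^*=\bigcup_r\Lambda_{m,r}$ cannot be obtained from Lusin's theorem here: Theorem \ref{t:shadowing} assumes only an invariant set $\Lambda$ with no given measure, and even if one fixes a measure, Lusin only produces compacta on which the restriction is continuous whose union is of \emph{full measure}, not an exhaustion of $\Lambda_m^*$ as the definition of shadowable set demands. The paper's decomposition is measure-free: for each $z$ a continuity radius $\delta_z>0$ is extracted from the pointwise convergence $\Upsilon_{o(\tilde z)}\to\Upsilon_{o(z)}$ (entrywise, with uniform entry bounds via the Appendix), and $\Lambda_{m,r}$ is the sublevel set where that radius is at least $1/r$. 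Second, the phrase ``analogously for $E'$'' hides exactly the place where the decomposition is actually used. For the right-inverse error one computes
\begin{equation*}
E'_{k,l}=\bigl(\Upsilon_{o(x^{(k)})}\bigr)_{0,l-k}-\bigl(\Upsilon_{o(x^{(k-1)})}\bigr)_{1,l-k+1}
+\bigl(Df(x^{(k-1)})-Df(y_{k-1})\bigr)\bigl(\Upsilon_{o(x^{(k-1)})}\bigr)_{0,l-k+1}\text{,}
\end{equation*}
and the first difference (which is the term $e(p)$ in the paper's Lemma \ref{l:step2}) is \emph{not} a $Df$-difference; it compares rows of $\Upsilon$ at two nearby base points $x^{(k)}$ and $f(x^{(k-1)})$, and smallness of this term is precisely what Lemma \ref{l:eps} and the stratification $\Lambda_{m,r}$ are designed to deliver. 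Your left-inverse estimate does not use the decomposition at all, so asserting the right-inverse estimate is ``analogous'' skips the one step in the Proposition where the $\Lambda_{m,r}$ are indispensable. To complete your proof you would need to replace Lusin with the paper's pointwise-modulus decomposition and then run a separate right-inverse estimate making explicit use of the resulting uniform closeness of $\Upsilon$-entries across $\Lambda_{m,r}$.
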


We first outline the proof of Proposition \ref{p:delta}. First we construct
a countable decomposition of $\Lambda _{m}^{\ast }$ into sets $\Lambda
_{m,r} $ so that the inverses of\ operators $\Gamma $ for two points in $%
\Lambda _{m,r}$ are close in some sense. Then we show in two steps that for $%
\delta $ small enough, $\Gamma _{{{{\mathbf{y}}}}}$ has an approximate left
and an approximate right inverse, if ${{{\mathbf{y}}}}$ is a $\delta $%
-pseudoorbit and $y_{j}$ in a $\delta $-neighborhood of $\Lambda _{m,r}$.
From this we deduce that $\Gamma _{{{{\mathbf{y}}}}}$ is invertible in both $%
X_{n}$ and $X_{\infty }$.

First note that for any $x\in M$ and\ any $X_{n}$, $1\leq n\leq \infty $,
the operator $\Gamma _{o(x)}$ is bounded and quasi-diagonal in the sense
that the diagonal elements are identity operators, and the only other
non-vanishing elements in its matrix representation are on the lower
diagonal (a precise definition is in the Appendix to the paper).

Let $\Upsilon $ be the inverse of $\Gamma _{o(x)}$. By Lemma \ref{l:matrix}
in the Appendix and the comment after the Lemma, $\Upsilon $ has a matrix
representation $(\Upsilon _{i,j})$ (its elements $\Upsilon _{i,j}$ are
linear operators on ${{{{\mathbb{R}}}}}^{d}$).

\begin{lemma}
\label{l:eps}Let $m,p$ be positive integers and $\varepsilon >0$, and let $%
\Lambda _{m}^{\ast }$ be as defined by (\ref{r:lambdam}). Then $\Lambda
_{m}^{\ast }$ can be decomposed into an increasing union of sets $\Lambda
_{m}^{\ast }=\tbigcup_{r=1}^{\infty }\Lambda _{m,r}$, such that if $z,%
\widetilde{z}\in \Lambda _{m,r}$, if $\Upsilon ,\widetilde{\Upsilon }$ are
the inverses of $\Gamma _{o(z)},\Gamma _{o(\widetilde{z})}$ in $X_{n}$ and $%
|z-\widetilde{z}|\leq 1/r$, then 
\begin{equation}
\sup_{|j|\leq p}|\Upsilon _{0,j}-\widetilde{\Upsilon }_{0,j}|<\varepsilon {%
\text{.}}  \label{r:taueps}
\end{equation}
\end{lemma}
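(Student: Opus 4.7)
The strategy is to reduce to a finite-dimensional partition argument plus a spatial buffer. First, by the row bound (\ref{r:44}) from the Appendix applied to $\Upsilon=\Upsilon_{o(z)}$, the hypothesis $\|\Upsilon\|_n\le m$ yields
$$|\Upsilon_{0,j}(z)|\;\le\;m\,d\sqrt d\,\exp(-|j|/n)\qquad(z\in\Lambda_m^*,\ j\in\mathbb{Z}).$$
Hence the finite collection of matrix entries assembles into a map
$$T:\Lambda_m^*\longrightarrow\mathbb{R}^{(2p+1)d^2},\qquad T(z):=\bigl(\Upsilon_{0,j}(z)\bigr)_{|j|\le p},$$
with values in a bounded subset $K\subseteq\mathbb{R}^{(2p+1)d^2}$, and it suffices to partition $\Lambda_m^*$ so that $|T(z)-T(\widetilde z)|<\varepsilon$ whenever $z,\widetilde z$ lie in a common piece.

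Fixing a countable Borel partition $\{S_k\}_{k\in\mathbb{N}}$ of $K$ with $\operatorname{diam}(S_k)<\varepsilon/2$ (half-open cubes of sufficiently small side will do) and setting $C_k:=T^{-1}(S_k)$, the bound (\ref{r:taueps}) holds unconditionally for any two points in a common $C_k$. I would then define
$$\Lambda_{m,r}\;:=\;\bigcup_{k\in\mathbb{N}}\bigl\{z\in C_k:\;d(z,\Lambda_m^*\setminus C_k)>1/r\bigr\}.$$
Monotonicity $\Lambda_{m,r}\subseteq\Lambda_{m,r+1}$ is immediate, and for $z,\widetilde z\in\Lambda_{m,r}$ with $|z-\widetilde z|\le 1/r$ and $z\in C_k$, the buffer rules out $\widetilde z\in\Lambda_m^*\setminus C_k$, so $\widetilde z\in C_k$ as well; the diameter bound on $S_k$ then gives (\ref{r:taueps}).

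The main obstacle is to show $\bigcup_r\Lambda_{m,r}=\Lambda_m^*$: every $z$ must eventually be at strictly positive distance from the union of other pieces, and this can fail at accumulation points of a Borel partition obtained by pulling back cubes under a map $T$ that need not be continuous. I would handle this by replacing the single partition with a nested sequence of refinements $\{C_k^{(s)}\}_k$ associated to cube partitions of $K$ of diameter $<\varepsilon/2^{s+1}$, and redefining $\Lambda_{m,r}:=\bigcup_{s\le r}\bigcup_k\{z\in C_k^{(s)}:\,d(z,\Lambda_m^*\setminus C_k^{(s)})>1/r\}$. A Baire-type argument on the countable family of relative boundaries of these partitions ensures that every $z\in\Lambda_m^*$ lies in the relative interior of some $C_k^{(s)}$, and hence eventually enters $\Lambda_{m,r}$. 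The first two steps are routine bookkeeping with the row bound; the exhaustion step is where the technical content of the lemma is concentrated.
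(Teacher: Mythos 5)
Your reduction to the map $T(z) = (\Upsilon_{0,j}(z))_{|j|\le p}$ with bounded range, and the buffered-partition device, are sound as far as they go, and you correctly locate the crux in the exhaustion $\bigcup_r \Lambda_{m,r} = \Lambda_m^*$. But the proposed repair does not work. Absent any continuity information about $T$, the pulled-back pieces $C_k^{(s)} = T^{-1}(\text{cube})$ at every refinement level can be dense and codense in one another (think of $T$ encoding an indicator of a dense subset of $\Lambda_m^*$): then no point of $\Lambda_m^*$ is ever at positive distance from the complementary pieces, no $\Lambda_{m,r}$ captures it, and $\bigcup_r \Lambda_{m,r} = \emptyset$. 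A Baire-type argument cannot rescue this, because for an arbitrary Borel map the relative boundaries $\partial_{\Lambda_m^*} C_k^{(s)}$ need not be nowhere dense at all; the assertion that ``every $z$ lies in the relative interior of some $C_k^{(s)}$'' is simply false in this generality.

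What the argument actually needs --- and what the paper's proof supplies --- is that $z \mapsto \Upsilon_{0,j}(z)$ is continuous on $\Lambda_m^*$ in the relative topology. This follows from a compactness-and-uniqueness argument: if $z_k \to z$ in $\Lambda_m^*$ then (a) $\Gamma_{o(z_k)} \to \Gamma_{o(z)}$ entrywise; (b) by (\ref{r:exp2}) the entries $\Upsilon^{(k)}_{i,j}$ decay geometrically in $|i-j|$ with constants uniform over $\Lambda_m^*$, so along any subsequence they converge entrywise to a matrix $B$ which, by that same bound, defines a bounded operator on $X_n$; and (c) passing to the limit in the entrywise identities $\Upsilon^{(k)}\Gamma_{o(z_k)} = \Gamma_{o(z_k)}\Upsilon^{(k)} = I$ (each entry is a finite sum because $\Gamma$ is quasi-diagonal) shows $B$ inverts $\Gamma_{o(z)}$, hence $B = \Upsilon(z)$ by uniqueness of the inverse in $X_n$. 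With this relative continuity in hand the partition apparatus is superfluous: for each $z$ there is a modulus $\delta_z > 0$ such that $\tilde z \in \Lambda_m^*$ and $|z - \tilde z| < \delta_z$ force (\ref{r:taueps}), and one sets $\Lambda_{m,r} = \{z \in \Lambda_m^* : \delta_z > 1/r\}$.
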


\begin{proof}
Assume $z_{k}\in \Lambda _{m}$ for all integers $k$. First note that if $%
z_{k}\rightarrow z$ as $k\rightarrow \infty $ in $M$, then $\Gamma
_{o(z_{k})}$ converges to $\Gamma _{o(z)}$ pointwise (i.e. for each matrix
element $i,j$). Say $\Upsilon ^{(k)},\Upsilon $ are the inverses of $\Gamma
_{o(z_{k})}$,$\Gamma _{o(z)}$ in $X_{n}$. As $\Upsilon _{i,j}^{(k)},\Upsilon
_{i,j}$ are uniformly bounded for a given $i,j$ (see (\ref{r:exp2}) in the
Appendix), $\Upsilon ^{(k)}$ converges to $\Upsilon $ pointwise since the
inverse of $X_{n}$ is unique. Reasoning by contradiction, we find for each $%
z\in \Lambda _{m}$ a $\delta _{z}$-neighborhood, $\delta _{z}>0$ so that if $%
|z-\widetilde{z}|<\delta _{z}$, (\ref{r:taueps}) holds. Now $\Lambda _{m,r}$
is the set of all $z\in \Lambda _{m}^{\ast }$ such that $\delta _{z}<1/r$.
\end{proof}

We now introduce the notation and write explicitly the relations equivalent
to invertibility in $X_{n}$. Choose any sequence $z_{k}\in \Lambda _{m}$, $%
k\in {{{{\mathbb{Z}}}}}$, and denote by $\Upsilon ^{(k)}$ the inverse of $%
\Gamma _{o(z_{k})}$ in $X_{n}$, which then by definition satisfies 
\begin{eqnarray}
\Upsilon ^{(k)}\Gamma _{o(z_{k})} &=&\Gamma _{o(z_{k})}\Upsilon ^{(k)}=I,
\label{r:17a} \\
||\Upsilon ^{(k)}||_{n} &\leq &m\text{.}  \label{r:17b}
\end{eqnarray}%
If $(\Upsilon _{i,j}^{(k)})$, $i,j\in {{{{\mathbb{Z}}}}}$, is the matrix
representation of $\Upsilon ^{(k)}$ (which exists because of Lemma \ref%
{l:matrix} in the Appendix), then (\ref{r:17a}) and the definition of $%
\Gamma _{o(z_{k})}$ imply that for all $\,i,j\in {{{{\mathbb{Z}}}}}$, 
\begin{eqnarray}
\Upsilon _{0,j-i}^{(i)}-\Upsilon _{0,j-i+1}^{(i)}Df(z_{i,j-i}) &=&\delta
_{j-i}I{{{\text{,}}}}  \label{r:in1} \\
-Df(z_{i-1,0})\Upsilon _{0,j-i+1}^{(i-1)}+\Upsilon _{1,j-i+1}^{(i-1)}
&=&\delta _{j-i}I\text{{.}}  \label{r:in15}
\end{eqnarray}%
where $z_{i,j}=f^{j}(z_{i})$, $\delta _{j}$ is the Kronecker symbol $\delta
_{0}=1$, $\delta _{j}=0$ for $j\not=0$, and $I$ in (\ref{r:in1}), (\ref%
{r:in1}) is the identity operator on ${{{{\mathbb{R}}}}}^{d}$. Furthermore, (%
\ref{r:17b}) implies (using (\ref{r:44})) for rows $i=0,1$ that%
\begin{eqnarray}
\sum_{j\in {{{{\mathbb{Z}}}}}}\func{exp}(|j|/n)|\Upsilon _{0,j}^{(i)}| &\leq
&md\sqrt{d}{{{\text{,}}}}  \label{r:in2b} \\
\sum_{j\in {{{{\mathbb{Z}}}}}}\func{exp}(|j|/n)|\Upsilon _{1,j}^{(i)}| &\leq
&m\func{exp}(1/n)d\sqrt{d}\text{{.}}  \label{r:in25b}
\end{eqnarray}

Let $c_{1}$, $c_{2}$ be the constants related to the continuity and H\"{o}%
lder continuity of $Df$ on $M$, i.e. such that for all $z_{1},z_{2}\in M$, 
\begin{eqnarray}
|Df(z_{1})| &\leq &c_{1}{{{\text{,}}}}  \label{r:in3} \\
|Df(z_{1})-Df(z_{2})| &\leq &c_{2}|z_{1}-z_{2}|^{\alpha }{{{\text{.}}}}
\label{r:in4}
\end{eqnarray}

\begin{lemma}
\label{l:step1}Assume $m$ is an integer and $\Lambda _{m}^{\ast }$ as
defined by (\ref{r:lambdam}). Then there exists $\delta >0$ such that if $%
\boldsymbol{y}$ is a $\delta $-pseudoorbit, $y_{j}$ in a $\delta $%
-neighborhood of $\Lambda _{m}^{\ast }$, then $\Gamma _{\mathbf{y}}$ has an
approximate left inverse $\tilde{\Theta}$ in $X_{n}$, that means a
continuous operator $\tilde{\Theta}$ such that $||\tilde{\Theta}\Gamma _{%
\mathbf{y}}-I||_{n}\leq 1/2$. Furthermore, $||\tilde{\Theta}||_{n}\leq md%
\sqrt{d}{{{\text{.}}}}$
\end{lemma}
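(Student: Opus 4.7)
The plan is to construct $\tilde{\Theta}$ row by row. For each integer $i$, pick $z^{(i)}\in \Lambda_m^*$ with $|z^{(i)}-y_i|<\delta$ (possible since $y_i$ is in a $\delta$-neighbourhood of $\Lambda_m^*$), let $\Upsilon^{(i)}$ be the inverse of $\Gamma_{o(z^{(i)})}$ in $X_n$, and set
\begin{equation*}
\tilde{\Theta}_{i,j} := \Upsilon^{(i)}_{0,\,j-i},
\end{equation*}
so that the $i$-th row of $\tilde{\Theta}$ is the shifted $0$-th row of $\Upsilon^{(i)}$. The bound $||\tilde{\Theta}||_n \leq md\sqrt{d}$ then follows immediately from the operator-matrix inequality (\ref{r:norm}), the elementary estimate $|j|-|i|\leq |j-i|$, and the row bound (\ref{r:in2b}).

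The next step is to compute the entries of $\tilde{\Theta}\Gamma_{\boldsymbol{y}}-I$ directly. Using the matrix product and the identity (\ref{r:in1}) for $\Upsilon^{(i)}$, a short computation gives
\begin{equation*}
(\tilde{\Theta}\Gamma_{\boldsymbol{y}}-I)_{i,j} = \Upsilon^{(i)}_{0,\,j-i+1}\bigl(Df(f^{j-i}(z^{(i)}))-Df(y_j)\bigr).
\end{equation*}
Applying (\ref{r:in4}) together with iteration of the pseudoorbit relation and $|z^{(i)}-y_i|<\delta$ yields $|f^l(z^{(i)})-y_{i+l}|\leq D\delta c_1^{|l|}$ for a constant $D$ depending only on $c_1$, and hence the $Df$-difference is bounded by $\min\{c_2 D^\alpha \delta^\alpha c_1^{\alpha|l|},\,2c_1\}$.

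To estimate $||\tilde{\Theta}\Gamma_{\boldsymbol{y}}-I||_n$ I would split the row sum over $l = j-i$ at a cutoff $L = L(\delta)$, chosen so that $c_1^{\alpha L}\delta^\alpha \to 0$ as $\delta\to 0$ while $L\to\infty$. On $|l|\leq L$, the H\"older estimate combined with (\ref{r:in2b}) gives a contribution of order $c_2 D^\alpha c_1^{\alpha L}\delta^\alpha \cdot md\sqrt{d}\exp(1/n)$, which can be made $\leq 1/4$ by choice of $\delta$. On $|l|>L$, the trivial bound $2c_1$ reduces the problem to controlling the tail $\sup_i\sum_{|l|>L}\exp(|l|/n)|\Upsilon^{(i)}_{0,\,l+1}|$.

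The main obstacle is uniform tail control over the varying base points $z^{(i)}$: (\ref{r:in2b}) only provides uniform $\ell^1$-summability, not uniform tail smallness. The way around this is to exploit that the actual weight appearing in (\ref{r:norm}), namely $\exp((|i+l|-|i|)/n)$, is strictly smaller than $\exp(|l|/n)$ whenever $i$ and $l$ have opposite signs, which damps contributions for $|l|$ comparable to $|i|$. Combined with the pointwise decay $|\Upsilon^{(i)}_{0,\,l}|\leq md\sqrt{d}\exp(-|l|/n)$ derived from $||\Upsilon^{(i)}||_n\leq m$, one can push the tail estimate through uniformly in $i$. Adding the two regimes will then yield $||\tilde{\Theta}\Gamma_{\boldsymbol{y}}-I||_n \leq 1/2$ for $\delta$ sufficiently small.
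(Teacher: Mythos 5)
Your construction differs from the paper's in one crucial respect: you set $\tilde{\Theta}_{i,j}=\Upsilon^{(i)}_{0,j-i}$, whereas the paper defines $\tilde{\Theta}_{i,j}=\lambda^{|j-i|}\Upsilon^{(i)}_{0,j-i}$ with $\lambda<1$ chosen close to $1$. That $\lambda^{|j-i|}$ factor is not cosmetic; it is precisely what resolves the tail-control problem that you correctly identify as "the main obstacle." In the paper's estimate, the tail contribution ($|j-i|\geq q$) carries the factor $\lambda^{|j-i|}\leq\lambda^{q}$, so the entire tail of the row sum is damped by $\lambda^{q}$ times the uniformly bounded quantity $\sum_l\exp(|l|/n)|\Upsilon^{(i)}_{0,l+1}|\leq\exp(1/n)md\sqrt{d}$ from (\ref{r:in2b}), and the extra error $\lambda^{|j-i|}(1-\lambda)c_1|\Upsilon^{(i)}_{0,j-i+1}|$ is made small by taking $\lambda$ close to $1$.

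Your proposed substitute for this mechanism has a genuine gap. You claim the tail can be controlled via the damping $\exp((|i+l|-|i|)/n)<\exp(|l|/n)$ when $i$ and $l$ have opposite signs, combined with the pointwise bound $|\Upsilon^{(i)}_{0,l}|\leq md\sqrt{d}\exp(-|l|/n)$. But when $i$ and $l$ have the \emph{same} sign, $|i+l|-|i|=|l|$ exactly, there is no damping, and the product $\exp(|l|/n)\cdot md\sqrt{d}\exp(-|l+1|/n)$ is of order $md\sqrt{d}$ with no decay in $l$ at all. So the partial sum $\sum_{|l|>L}\exp((|i+l|-|i|)/n)|\Upsilon^{(i)}_{0,l+1}|$ is not made small uniformly in $i$ by this argument; for $i$ large of one sign, the part of the tail with $l$ of the same sign is not controlled. (The full row sum is of course finite by (\ref{r:in2b}), but the pointwise bound $md\sqrt{d}\exp(-|l|/n)$ is strictly weaker than the summable bound and cannot recover a uniform rate.) To make your route work without the $\lambda$-modification, you would need the genuine uniform exponential decay $|\Upsilon^{(i)}_{0,l}|\leq c\,\lambda_0^{|l|}\exp(-|l|/n)$ supplied by Lemma \ref{l:matrix} and (\ref{r:exp2}), with $c,\lambda_0$ depending only on $c_1$ and $m$; that would give $\sum_{|l|>L}\exp(|l|/n)|\Upsilon^{(i)}_{0,l+1}|\leq C\lambda_0^{L}$ uniformly. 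As written, though, the step "one can push the tail estimate through uniformly in $i$" does not follow from what you invoke.

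The rest of your proposal is correct and matches the paper's structure: the formula $(\tilde{\Theta}\Gamma_{\boldsymbol{y}}-I)_{i,j}=\Upsilon^{(i)}_{0,j-i+1}(Df(f^{j-i}(z^{(i)}))-Df(y_j))$, the orbit-growth estimate $|f^l(z^{(i)})-y_{i+l}|\leq D\delta c_1^{|l|}$, the split at a cutoff with the H\"older bound on the near diagonal, and the norm bound $||\tilde{\Theta}||_n\leq md\sqrt{d}$ via (\ref{r:in2b}) are all sound. Only the tail estimate needs repair, either by inserting the paper's $\lambda^{|j-i|}$ factor or by explicitly invoking (\ref{r:exp2}).
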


\begin{proof}
Let $z_{i}\in \Lambda _{m}^{\ast }$ such that $|z_{i}-y_{i}|<\delta $ and
let $\Upsilon ^{(i)}$ be the inverse of $\Gamma _{o(z_{i})}$ in $X_{n}$. We
define%
\begin{equation}
\tilde{\Theta}_{i,j}=\lambda ^{|j-i|}\Upsilon _{0,j-i}^{(i)}  \label{r:in5}
\end{equation}%
for some $0<\lambda <1$ to be defined later. We denote by $\Delta =\tilde{%
\Theta}\Gamma _{{\mathbf{y}}}-I$, and then by definition of $\Gamma _{{{{%
\mathbf{y}}}}}$ and since $\delta _{j-i}=\lambda ^{|j-i|}\delta _{j-i}$, 
\begin{equation*}
\Delta _{i,j}=\tilde{\Theta}_{i,j}-\tilde{\Theta}_{i,j+1}Df(y_{j})-\lambda
^{|j-i|}\delta _{j-i}I{\text{{.}}}
\end{equation*}%
Substituting $\delta _{j-i}I$ with (\ref{r:in1}), applying (\ref{r:in5}) and
then (\ref{r:in3}) and (\ref{r:in4}) we get 
\begin{eqnarray*}
|\Delta _{i,j}| &=&|\lambda ^{|j-i|}\Upsilon
_{0,j-i+1}^{(i)}Df(z_{i,j-i})-\lambda ^{|j-i+1|}\Upsilon
_{0,j-i+1}^{(i)}Df(y_{j})|\leq \\
&\leq &\lambda ^{|j-i|}|\Upsilon _{0,j-i+1}^{(i)}|\cdot \min
\{2c_{1},c_{2}|z_{i,j-i}-y_{j}|^{\alpha }\}+ \\
&&+\lambda ^{|j-i|}(1-\lambda )\cdot c_{1}|\Upsilon _{0,j-i+1}^{(i)}|{\text{.%
}}
\end{eqnarray*}

For some integer $q$ also to be defined later, we can rewrite that as 
\begin{equation}
|\Delta _{i,j}|\leq \left\{ 
\begin{array}{cc}
|\Upsilon _{0,j-i+1}^{(i)}|\left( 2c_{1}\lambda ^{q}+c_{1}(1-\lambda )\right)
& |j-i|\geq q \\ 
|\Upsilon _{0,j-i+1}^{(i)}|\left( c_{2}d(q)+c_{1}(1-\lambda )\right) & 
|j-i|<q,%
\end{array}%
\right.  \label{r:in6}
\end{equation}%
where 
\begin{equation*}
d(q)=\sup_{|j-i|<q}|z_{i,j-i}-y_{j}|^{\alpha }{\text{.}}
\end{equation*}

Now we bound $||\Delta ||_{n}$ using (\ref{r:40}). From (\ref{r:in2b}) and (%
\ref{r:in6}) we get that for all $i$, 
\begin{equation}
\func{exp}(-|i|/n)\sum_{j}\func{exp}(|j|/n)|\Delta _{i,j}|\leq \lambda
^{q}2c_{1}c_{3}+c_{1}(1-\lambda )c_{3}+c_{2}d(q)c_{3},  \label{r:in7b}
\end{equation}%
where $c_{3}=\exp (1/n)md\sqrt{d}$. We can now choose $0<\lambda <1$
(depending only on $c_{1},n,m)$ so that the second summand in (\ref{r:in7b})
is $\leq 1/8$, and an integer $q$ large enough (also depending only on $%
c_{1},n,m$) so that the first summand is $\leq 1/8$. We can also find $%
\delta >0$ small enough (depending only on $c_{2},n,m,\alpha $) so that $%
d(q) $ is small enough and that the third summand is also $\leq 1/8$, hence $%
||\Delta ||_{n}\leq 3/8<1/2$.

The relation 
\begin{equation*}
||\tilde{\Theta}||_{n}\leq md\sqrt{d}{{{\text{.}}}}
\end{equation*}%
follows directly from (\ref{r:17b}), (\ref{r:in5}), (\ref{r:40}) and (\ref%
{r:44}).
\end{proof}

In the following Lemma we again use the notation $\tilde{\Theta}$, $\lambda $
and $\delta $. They are not necessarily the same as in the statement and the
proof of Lemma \ref{l:step1}, but the notation is kept for simplicity.

\begin{lemma}
\label{l:step2}Assume $m$ is an integer. Then there exists $p\in \boldsymbol{%
N}$ such that for each $r\in \boldsymbol{N}$, if $\Lambda _{m,r}$ is as
constructed in Lemma \ref{l:eps} then the following holds:\ there exists $%
\delta >0$, such that if $\boldsymbol{y}$ is a $\delta $-pseudoorbit, $y_{j}$
in a $\delta $-neighborhood of $\Lambda _{m,r}$, then $\Gamma _{\boldsymbol{y%
}}$ has an approximate right inverse $\tilde{\Theta}$ in $X_{n}$, that means
a continuous operator $\tilde{\Theta}$ such that $||\Gamma _{y}\tilde{\Theta}%
-I||_{n}\leq 1/2$. Furthermore, $||\tilde{\Theta}||_{n}\leq md\sqrt{d}{{{%
\text{.}}}}$
\end{lemma}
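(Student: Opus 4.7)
The plan is to retain the formula for $\tilde{\Theta}$ from Lemma \ref{l:step1}, namely $\tilde{\Theta}_{i,j} = \lambda^{|j-i|} \Upsilon^{(i)}_{0, j-i}$ with $z_i \in \Lambda_{m,r}$ chosen so that $|z_i - y_i| < \delta$, and to estimate $\Delta := \Gamma_{\boldsymbol{y}} \tilde{\Theta} - I$ using the column identity (\ref{r:in15}) in place of the row identity (\ref{r:in1}). Writing $l := j - i$, a direct computation will yield
\[
(\Gamma_{\boldsymbol{y}} \tilde{\Theta})_{i,j} = \lambda^{|l|} \Upsilon^{(i)}_{0,l} - \lambda^{|l+1|} Df(y_{i-1}) \Upsilon^{(i-1)}_{0, l+1}.
\]
Replacing $Df(y_{i-1})$ by $Df(z_{i-1})$ introduces an error of size at most $c_2 \delta^\alpha |\Upsilon^{(i-1)}_{0, l+1}|$ by (\ref{r:in4}); substituting (\ref{r:in15}) then expresses the above as $\delta_l I + \Delta_{i,j}$, where
\[
\Delta_{i,j} = (\lambda^{|l+1|} - 1)\delta_l I + \lambda^{|l|} \bigl(\Upsilon^{(i)}_{0,l} - \Upsilon^{(i-1)}_{1, l+1}\bigr) + (\lambda^{|l|} - \lambda^{|l+1|}) \Upsilon^{(i-1)}_{1, l+1} + E_1,
\]
with $|E_1| \leq \lambda^{|l+1|} c_2 \delta^\alpha |\Upsilon^{(i-1)}_{0, l+1}|$.

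The new obstacle, absent from Lemma \ref{l:step1}, is the middle term: I need $\Upsilon^{(i)}_{0,l}$ to be close to $\Upsilon^{(i-1)}_{1, l+1}$. By the shift relation $\Gamma_{o(f(z_{i-1}))} = S^{-1} \Gamma_{o(z_{i-1})} S$, the latter equals the $(0, l)$-entry of $\Upsilon_{o(f(z_{i-1}))}$. Since $z_{i-1} \in \Lambda_{m,r}$ and $||S^{\pm 1}||_n \leq e^{1/n}$, the shifted operator satisfies $||\Upsilon_{o(f(z_{i-1}))}||_n \leq m e^{2/n}$, so $f(z_{i-1}) \in \Lambda^*_{m^*}$ for $m^* := \lceil m e^{2/n} \rceil$. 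I will apply Lemma \ref{l:eps} to $\Lambda^*_{m^*}$ (with the parameter $p$ to be fixed below and a suitably small $\varepsilon$) to obtain sets $\Lambda_{m^*, r}$, and will refine $\Lambda_{m,r}$ by replacing it with $\Lambda_{m,r} \cap \Lambda_{m^*, r}$. The pseudoorbit estimate $|z_i - f(z_{i-1})| \leq (2+c_1)\delta$ will ensure that, for $\delta < 1/((2+c_1)r)$, the hypothesis of Lemma \ref{l:eps} is met, yielding $|\Upsilon^{(i)}_{0,l} - \Upsilon^{(i-1)}_{1, l+1}| < \varepsilon$ for all $|l| \leq p$.

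The bound $||\Delta||_n \leq 1/2$ is then completed by splitting along $|l| \leq p$ and $|l| > p$. For $|l| > p$ the row-sum bounds (\ref{r:in2b})--(\ref{r:in25b}) control $|\Upsilon^{(i)}_{0,l}|$ and $|\Upsilon^{(i-1)}_{1,l+1}|$, and the $\lambda^{|l|}$ damping makes the tail small. For $|l| \leq p$ the middle term is bounded by $\varepsilon$, the first and third terms by $(1-\lambda)$, and $E_1$ by $c_2 \delta^\alpha$. I choose parameters in the order $\lambda \to p \to \varepsilon \to \delta$: first $\lambda < 1$ close enough to $1$ so that the first and third summands contribute at most $1/8$; then $p$ large enough (depending only on $\lambda, m, n$) so that the tail of the middle term contributes at most $1/8$; then $\varepsilon$ small enough in Lemma \ref{l:eps}, fixing the grading $\Lambda_{m,r}$; and finally $\delta$ small enough to make $E_1$ contribute at most $1/8$ and to force $|z_i - f(z_{i-1})| < 1/r$. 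The operator-norm estimate $||\tilde{\Theta}||_n \leq m d\sqrt{d}$ follows verbatim from the proof of Lemma \ref{l:step1} since the defining formula for $\tilde{\Theta}$ is unchanged. The principal obstacle is the shift issue: ensuring that $f(z_{i-1})$ sits in a set on which Lemma \ref{l:eps}'s refinement applies, and this is precisely what the fine grading $\{\Lambda_{m,r}\}$ is engineered to provide.
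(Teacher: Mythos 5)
Your proposal follows the paper's own argument very closely: the same defining formula $\tilde{\Theta}_{i,j}=\lambda^{|j-i|}\Upsilon^{(i)}_{0,j-i}$, the same use of the column identity (\ref{r:in15}) in place of (\ref{r:in1}), the same reduction to controlling the error term $|\Upsilon^{(i)}_{0,l}-\Upsilon^{(i-1)}_{1,l+1}|$ via Lemma \ref{l:eps}, and essentially the same parameter ordering. The one place you deviate is in handling the application of Lemma \ref{l:eps} to $z=z_{i}$ and $\tilde z=f(z_{i-1})$: you correctly observe that, because $\Gamma_{o(f(z_{i-1}))}=S^{-1}\Gamma_{o(z_{i-1})}S$ and $\|S^{\pm 1}\|_{n}=e^{1/n}\neq 1$, the point $f(z_{i-1})$ is a priori only in $\Lambda^{*}_{m^{*}}$ with $m^{*}=\lceil m e^{2/n}\rceil$, and not necessarily in $\Lambda^{*}_{m}$. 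The paper's proof simply asserts ``$z,\tilde z\in\Lambda_{m,r}$'' at this point, without justifying the membership of $\tilde z$; so you have put your finger on a genuine subtlety.

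However, your proposed remedy does not quite close the gap. After refining the grading to $\Lambda_{m,r}\cap\Lambda_{m^{*},r}$, you can guarantee that $z_{i}$ lies in $\Lambda_{m^{*},r}$, but you only show $f(z_{i-1})\in\Lambda^{*}_{m^{*}}=\bigcup_{r}\Lambda_{m^{*},r}$ — you do not show it lies in the same $\Lambda_{m^{*},r}$, which is what Lemma \ref{l:eps} as stated requires of both its arguments. The index $r'$ for which $f(z_{i-1})\in\Lambda_{m^{*},r'}$ depends on $z_{i-1}$ and is a priori unbounded over $i$. The clean fix, which your compactness intuition already supports, is to observe that the contradiction argument proving Lemma \ref{l:eps} is in fact one-sided: for each $z$ in the grading set $\Lambda_{m,r}$, inequality (\ref{r:taueps}) holds for every $\tilde z$ with $|z-\tilde z|<1/r$ whose inverse merely satisfies a fixed uniform bound $\|\Upsilon_{o(\tilde z)}\|_{n}\leq m^{*}$ — $\tilde z$ itself need not belong to the grading set, since the equicontinuity/diagonal-extraction step only uses the uniform exponential decay (\ref{r:exp2}) of the matrix entries of $\Upsilon_{o(\tilde z)}$. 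With Lemma \ref{l:eps} strengthened this way (inflating only the constant in its proof from $m$ to $m^{*}$), your argument — and the paper's — goes through without needing to intersect grading sets at all: $z_{i}\in\Lambda_{m,r}$ suffices, and $f(z_{i-1})$ only needs its inverse bounded by $m e^{2/n}$, which you have established.
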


\begin{proof}
Assume $z_{i}\in \Lambda _{m}^{\ast }$ such that $|z_{i}-y_{i}|<\delta $ and
let $\Upsilon ^{(i)}$ be the inverse of $\Gamma _{o(z_{i})}$ in $X_{n}$. Let 
\begin{equation}
\tilde{\Theta}_{i,j}=\lambda ^{|j-i|}\Upsilon _{0,j-i}^{(i)}  \label{p:46}
\end{equation}%
for some $0<\lambda <1$ to be defined later, and let $\widetilde{\Delta }%
=\Gamma _{{\mathbf{y}}}\tilde{\Theta}-I$. Then 
\begin{equation*}
\widetilde{\Delta }_{i,j}=-Df(y_{i-1})\tilde{\Theta}_{i-1,j}+\tilde{\Theta}%
_{i,j}-\lambda ^{|j-i|}\delta _{j-i}I{\text{{.}}}
\end{equation*}

Similarly as in Lemma \ref{l:step1}, using (\ref{p:46}), (\ref{r:in15}) and
then (\ref{r:in3}) and (\ref{r:in4}), we get 
\begin{eqnarray*}
|\widetilde{\Delta }_{i,j}| &\leq &\lambda ^{|j-i|}|\Upsilon
_{0,j-i+1}^{(i-1)}|\cdot \min \{2c_{1},c_{2}|z_{i-1}-y_{i-1}|^{\alpha }\}+ \\
&&+\lambda ^{|j-i|}(1-\lambda )\cdot c_{1}|\Upsilon
_{0,j-i+1}^{(i-1)}|+\lambda ^{|j-i|}|\Upsilon _{0,j-i}^{(i)}-\Upsilon
_{1,j-i+1}^{(i-1)}|{\text{.}}
\end{eqnarray*}%
For some positive integer $p$ to be chosen later, we deduce that 
\begin{equation}
|\widetilde{\Delta }_{i,j}|\leq \left\{ 
\begin{array}{cc}
|\Upsilon _{0,j-i+1}^{(i)}|\left( 2c_{1}\lambda ^{p}+c_{1}(1-\lambda
)\right) +\lambda ^{p}|\Upsilon _{0,j-i}^{(i)}|+\lambda ^{p}|\Upsilon
_{1,j-i+1}^{(i-1)}| & |j-i|\geq p \\ 
|\Upsilon _{0,j-i+1}^{(i)}|\left( c_{2}|z_{i-1}-y_{i-1}|^{\alpha
}+c_{1}(1-\lambda )\right) +e(p) & |j-i|<p,%
\end{array}%
\right.  \label{r:hum}
\end{equation}%
where 
\begin{equation*}
e(p)=\sup_{|j-i|\leq p}|\Upsilon _{0,j-i}^{(i)}-\Upsilon _{1,j-i+1}^{(i-1)}|{%
\text{.}}
\end{equation*}%
From (\ref{r:in2b}), (\ref{r:in25b})\ and (\ref{r:hum}) we get that for all $%
i$, 
\begin{eqnarray}
\func{exp}(-|i|/n)\sum_{j}\func{exp}(|j|/n)|\widetilde{\Delta }_{i,j}| &\leq
&\lambda ^{p}2c_{1}c_{3}+\lambda ^{p}2m+c_{1}(1-\lambda )c_{3}+  \notag \\
&&+c_{2}\sup_{i\in {{{{\mathbb{Z}}}}}}|z_{i-1}-y_{i-1}|^{\alpha
}c_{3}+c_{4}(p)e(p),  \label{r:pppp4}
\end{eqnarray}%
where $c_{3}=\exp (1/n)md\sqrt{d}$, $c_{4}(p)=\sum_{|j|<p}\func{exp}(|j|/n){%
\text{.}}$

Again $\,0<\lambda <1$ is chosen so that the third summand in (\ref{r:pppp4}%
) is $\leq 1/8$. We choose $p$ large enough so that the sum of the first two
summands is $\leq 1/8$, and we choose $\delta _{1}$ small enough so that if $%
y_{i}$ is $\delta _{1}$-close to $z_{i}$ for all integers $i$, the fourth
summand is $\leq 1/8$.

We now set $\varepsilon =1/8c_{4}(p)$, and apply Lemma \ref{l:eps} and find
for the chosen $p$ an increasing decomposition $\Lambda
_{m}=\tbigcup_{r=1}^{\infty }\Lambda _{m,r}$. Let $\Upsilon =\Upsilon ^{(i)}$%
, $\widetilde{\Upsilon }=S^{-1}\Upsilon ^{(i-1)}S\,$, $z=z_{i}$ and $\tilde{z%
}=f(z_{i-1})$ for any integer $i$. Then $e(p)$ is equal to the left-hand
side of (\ref{r:taueps}). Now there exists $\delta \leq \delta _{1}$ such
that if $\boldsymbol{y}$ is a $\delta $-pseudoorbit, $|y_{i}-z_{i}|<\delta $
for all integers $i$, then $|z-\tilde{z}|<1/r$ and $z,\tilde{z}\in \Lambda
_{m,r}$, hence $e(p)<\varepsilon $ and the fourth summand is $\leq 1/8$.
From (\ref{r:40})\ we deduce that $||\widetilde{\Delta }||_{n}\leq 1/2$.

The bound on $||\tilde{\Theta}||_{n}$ is obtained as in Lemma \ref{l:step1}.
\end{proof}

We now complete the proof of Proposition \ref{p:delta}.

\begin{proof}
Let $\Lambda _{m,r}$ be the sets constructed in Lemma \ref{l:step2}, and let 
$\delta >0$ be the smaller of the $\delta $'s constructed in Lemmas \ref%
{l:step1}, \ref{l:step2} for given positive integers $m,r$. We first show
that for that $\delta $, if ${{{\mathbf{y}}}}$ satisfies the conditions of
the Proposition, than $\Gamma _{{{{\mathbf{y}}}}}$ has a continuous inverse
in $X_{n}$. Let $\tilde{\Theta}$ be the approximate left inverse constructed
in Lemma \ref{l:step1}. As $||\tilde{\Theta}\Gamma _{y}-I||_{n}\leq 1/2$, $%
\tilde{\Theta}\Gamma _{y}$ has the inverse 
\begin{equation*}
(\tilde{\Theta}\Gamma _{{{{\mathbf{y}}}}})^{-1}=\sum_{k=0}^{\infty }(I-%
\tilde{\Theta}\Gamma _{y})^{k}{{{\text{,}}}}
\end{equation*}%
as the series is absolutely convergent in $X_{n}$. We deduce that $\Gamma _{{%
\mathbf{y}}}$ has the left inverse $\Theta =(\tilde{\Theta}\Gamma _{{{{%
\mathbf{y}}}}})^{-1}\tilde{\Theta}$ in $X_{n}$, with the norm 
\begin{equation*}
||\Theta ||_{n}\leq ||(\tilde{\Theta}\Gamma _{{{{\mathbf{y}}}}})^{-1}||_{n}||%
\tilde{\Theta}||_{n}\leq \frac{1}{1-||\tilde{\Theta}\Gamma _{y}-I||_{n}}%
\cdot md\sqrt{d}\leq 2md\sqrt{d}{{{\text{.}}}}
\end{equation*}

Similarly we show that $\Gamma _{{\mathbf{y}}}$ has a bounded right inverse,
hence $\Theta $ must be the inverse of $\Gamma _{{\mathbf{y}}}$ in $X_{n}$.
We finally show that $\Theta \in X_{\infty }$. This\ and (\ref{p:bnd})\
follow directly from Lemma \ref{l:uh3}.
\end{proof}

We now prove Theorem \ref{t:shadowing} by constructing a contraction mapping
on $X_{\infty }$.

\begin{proof}
By Proposition \ref{p:delta}, there exist constants $\delta >0$ and $K>0$
such that for every $\delta $-pseudoorbit $\boldsymbol{y}$, $y_{k}$ in $%
\delta $-neighborhood of $\Lambda _{m,r}$ for all $k$, operator $\Gamma _{%
\boldsymbol{y}}$ is invertible on $X_{\infty }$ and $||\Gamma _{\boldsymbol{y%
}}^{-1}||_{\infty }\leqslant K$. We define maps $A_{\boldsymbol{y}}$ and $%
\Phi _{\boldsymbol{y}}$ on $X_{\infty }$ as 
\begin{eqnarray*}
A_{\boldsymbol{y}}(\boldsymbol{\xi })_{n} &=&f(y_{n-1}+\xi _{n-1})-y_{n}, \\
\Phi _{\boldsymbol{y}}(\boldsymbol{\xi }) &=&\boldsymbol{\xi }+\Gamma _{%
\boldsymbol{y}}^{-1}(A_{\boldsymbol{y}}(\boldsymbol{\xi })-\boldsymbol{\xi }%
).
\end{eqnarray*}%
It is not hard to show that $A_{\boldsymbol{y}}$ is differentiable on a
neighborhood of $0$ in $X_{\infty }$. The derivative of $A_{\boldsymbol{y}}$
at $\boldsymbol{\xi }$ is the linear operator on $X_{\infty }$ given by 
\begin{equation*}
(DA_{\boldsymbol{y}}(\boldsymbol{\xi })\boldsymbol{\eta })_{n}=Df(y_{n-1}+%
\xi _{n-1})\eta _{n-1}.
\end{equation*}%
Take any $0<\kappa <1$. Since $Df$ is continuous and $DA_{\boldsymbol{y}%
}(0)=I-\Gamma _{\boldsymbol{y}}$, for any $\rho >0$ small enough (smaller
than some $\rho _{0})$, if $||\boldsymbol{\xi }||_{\infty }\leqslant \rho $
then%
\begin{equation}
||DA_{\boldsymbol{y}}(\boldsymbol{\xi })-(I-\Gamma _{\boldsymbol{y}%
})||_{\infty }\leqslant \frac{\kappa }{K}.  \label{p:con1}
\end{equation}%
We now choose $\beta =\beta (\rho ,\Lambda _{m,r},\delta )$, $\beta \leq
\delta $ so that 
\begin{equation}
K\beta \leqslant (1-\kappa )\rho  \label{p:con2}
\end{equation}%
and assume that $\boldsymbol{y}$ is a $\beta $-pseudoorbit, or equivalently%
\begin{equation}
||A_{\boldsymbol{y}}(0)||_{\infty }\leq \beta .  \label{p:con25}
\end{equation}%
For any $||\boldsymbol{\xi }||_{\infty }\leqslant \rho $ the map $\Phi _{%
\boldsymbol{y}}$ is differentiable at $\boldsymbol{\xi }$ and 
\begin{eqnarray*}
D\Phi _{\boldsymbol{y}}(\boldsymbol{\xi }) &=&I+\Gamma _{\boldsymbol{y}%
}^{-1}(DA_{\boldsymbol{y}}(\boldsymbol{\xi })-I) \\
&=&\Gamma _{\boldsymbol{y}}^{-1}(DA_{\boldsymbol{y}}(\boldsymbol{\xi }%
)-(I-\Gamma _{\boldsymbol{y}})).
\end{eqnarray*}%
Therefore for each $||\boldsymbol{\xi }||_{\infty }\leqslant \rho $, 
\begin{equation}
||D\Phi _{\boldsymbol{y}}(\boldsymbol{\xi })||_{\infty }\leqslant \kappa <1.
\label{p:con3}
\end{equation}%
Now let $||\boldsymbol{\xi }||_{\infty }\leqslant \rho $. Then by applying (%
\ref{p:con3}), (\ref{p:con25}) and finally (\ref{p:con2}) we deduce that 
\begin{eqnarray*}
||\Phi _{\boldsymbol{y}}(\boldsymbol{\xi })||_{\infty } &\leqslant &||\Phi _{%
\boldsymbol{y}}(\boldsymbol{\xi })-\Phi _{\boldsymbol{y}}(0)||_{\infty
}+||\Phi _{\boldsymbol{y}}(0)||_{\infty } \\
&\leqslant &\kappa \rho +K\beta \\
&\leqslant &\rho .
\end{eqnarray*}%
Thus we have proved that $\Phi _{\boldsymbol{y}}$ is a contraction on $\{||%
\boldsymbol{\xi }||_{\infty }\leqslant \rho \}$ and therefore has a unique
fixed point in $\{||\boldsymbol{\xi }||_{\infty }\leqslant \rho \}$. Since $%
\Phi _{\boldsymbol{y}}$ has the same fixed points as $A_{\boldsymbol{y}}$
and since the fixed points of $A_{\boldsymbol{y}}$ are exactly the orbits of 
$f$, we have proved that for $\Lambda _{m,r}$ we can find the required $%
\beta =\beta (\rho ,\Lambda _{m,r},\delta )$ as in the definition of
shadowable set. We now set $\Lambda _{k}=\cup _{m,r\leq k}\Lambda _{m,r}$
and $\beta (\rho ,\Lambda _{k},\delta )=\min_{m,r\leq k}\beta (\rho ,\Lambda
_{m,r},\delta )$ which completes the proof.
\end{proof}

One can further strengthen Theorem \ref{t:shadowing} and replace the
condition that $\Gamma _{o(x{{{\mathbf{)}}}}}$ is invertible in $X_{n}$ with 
$\Gamma _{o(x{{{\mathbf{)}}}}}$ having left and right inverses $\Upsilon
:X_{n_{1}}\rightarrow X_{n_{2}}$ for some $n_{1}\geq n_{2}$ (the proof is
essentially analogous but with more complex notation). By Theorem \ref%
{t:mainhyp}, the Shadowing lemma for nonuniformly hyperbolic measures as
established by Katok \cite{Katok:95} now follows as a corollary, while other
formulations in the nonuniformly hyperbolic case (\cite{Hirayama:84}, \cite%
{Pollicott:93}) can be established by further modifying the proof of Theorem %
\ref{t:shadowing}.

\section{Appendix:\ norms of operators on $l_{\infty }({{{{\mathbb{R}}}}}%
^{d})$}

\label{section:a}Here we summarize several technical results on the
operators on $l_{\infty }({{{{\mathbb{R}}}}}^{d})$ for the convenience of
the reader. We say a bounded linear operator on $l_{\infty }({{{{\mathbb{R}}}%
}}^{d})$ has a matrix representation, if there exist a family of linear
operators $A_{i,j}\in L({{{{\mathbb{R}}}}}^{d})$, $i,j\in {{{{\mathbb{Z}}}}}$%
, such that for each $\boldsymbol{x}\in l_{\infty }({{{{\mathbb{R}}}}}^{d})$%
, 
\begin{equation*}
(A\mathbf{x})_{i}\boldsymbol{=}\sum_{j}A_{i,j}x_{j}{\text{,}}
\end{equation*}%
where the series above absolutely converges for all $\boldsymbol{x}\in
l_{\infty }({{{{\mathbb{R}}}}}^{d})$. We note that one can construct bounded
linear operators on $l_{\infty }({{{{\mathbb{R}}}}}^{d})$ which have no
matrix representation. (Example for $d=1$:\ such an operator is $A\mathbf{x}%
=l(\mathbf{x)}\iota $, where $\iota \in l_{\infty }({{{{\mathbb{R}}}}})$, $%
\iota _{j}=1$ for all $j$, and $l(\mathbf{x)}$ is any continuous linear
functional defined so that $l(c\cdot \iota )=c$ for $c\in {{{{\mathbb{R}}}}}$%
, $l(\mathbf{x)}=0$ for all $\mathbf{x}$ with only finitely many non-zeros,
and extended by the Hahn-Banach theorem to the entire $l_{\infty }({{{{%
\mathbb{R}}}}}^{d})$. See e.g. \cite{Rudin:91}, Section 6, for a more
general discussion.)

Assume a bounded linear operator $A$ on $l_{\infty }({{{{\mathbb{R}}}}}^{d})$
has the matrix representation $A_{i,j}\in L({{{{\mathbb{R}}}}}^{d})$. Then
by triangle inequality, 
\begin{equation}
||A||_{\infty }\leq \sup_{i}\sum_{j}|A_{i,j}|{\text{.}}  \label{r:rel}
\end{equation}%
We say that an operator $A$ on\ $l_{\infty }({{{{\mathbb{R}}}}}^{d})$ is
quasi-diagonal, if there exist linear operators $A_{i}\in L({{{{\mathbb{R}}}}%
}^{d})$ so that 
\begin{equation}
(A\boldsymbol{x})_{i}=-A_{i-1}x_{i-1}+x_{i}{\text{,}}  \label{r:quasi}
\end{equation}%
hence a quasi-diagonal operator on $l_{\infty }({{{{\mathbb{R}}}}}^{d})$ is
bounded if and only if $|A_{i}|~$are bounded uniformly in $i$.

One can prove by choosing appropriate $\boldsymbol{x}\in l_{\infty }({{{{%
\mathbb{R}}}}}^{d})$ that there is equality in\ (\ref{r:rel}) if $d=1$. More
generally, we can deduce the following:

\begin{lemma}
Say a bounded linear operator $A$ on $l_{\infty }({{{{\mathbb{R}}}}}^{d})$
has matrix representation $A_{i,j}$. Then%
\begin{equation}
||A||_{\infty }\geq \frac{1}{d\sqrt{d}}\sup_{i}\sum_{j}|A_{i,j}|{\text{.}}
\label{r:rel2}
\end{equation}
\end{lemma}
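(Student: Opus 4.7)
The strategy is to generalize the classical $d=1$ duality argument (in which one takes $x_j = \mathrm{sign}(A_{i_0,j})$ to attain equality in (\ref{r:rel})) by supplying two extra ingredients when $d > 1$: for each column index $j$, pick a unit vector $u_j \in \mathbb{R}^d$ that realizes the block operator norm $|A_{i_0,j}|$, and then flip signs coordinate-by-coordinate in $\mathbb{R}^d$ to force the resulting vector outputs to align rather than cancel.

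Concretely, I would fix $i_0 \in \mathbb{Z}$. Using compactness of the unit sphere in $\mathbb{R}^d$, pick for each $j$ a vector $u_j$ with $|u_j|=1$ and $|A_{i_0,j} u_j| = |A_{i_0,j}|$, and set $v_j := A_{i_0,j} u_j$, so that $|v_j| = |A_{i_0,j}|$. For each $l \in \{1,\ldots,d\}$, I would then form the test sequence $\mathbf{x}^{(l)} \in l_\infty(\mathbb{R}^d)$ by $x_j^{(l)} = \mathrm{sign}(v_j^{(l)})\, u_j$, which satisfies $||\mathbf{x}^{(l)}||_\infty \leq 1$. By the matrix-representation hypothesis, the series defining $(A\mathbf{x}^{(l)})_{i_0}$ converges absolutely, and its $l$-th coordinate equals $\sum_j |v_j^{(l)}|$; hence $\sum_j |v_j^{(l)}| \leq ||A||_\infty$ for each $l$.

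Summing these $d$ inequalities yields $d\,||A||_\infty \geq \sum_j \sum_l |v_j^{(l)}| = \sum_j |v_j|_1 \geq \sum_j |v_j|_2 = \sum_j |A_{i_0,j}|$, and taking the supremum over $i_0$ gives the stronger bound $||A||_\infty \geq (1/d)\sup_i \sum_j |A_{i,j}|$, which immediately implies the claimed inequality with $d\sqrt{d}$ in the denominator. The only points requiring care are the absolute convergence of the series $\sum_j A_{i_0,j} x_j^{(l)}$ (guaranteed by the matrix-representation hypothesis, since $\mathbf{x}^{(l)} \in l_\infty(\mathbb{R}^d)$) and the finiteness of $\sum_j |A_{i_0,j}|$ (obtained by applying the same hypothesis to the sequence $x_j = u_j$). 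No serious obstacle is anticipated; the argument is essentially the scalar duality repeated once per output coordinate, combined with the elementary $\ell^1 \geq \ell^2$ inequality in $\mathbb{R}^d$.
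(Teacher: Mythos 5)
Your proof is correct, and in fact it is cleaner and yields a strictly stronger constant than the paper's argument. The paper fixes $i$, chooses unit vectors $z_j$ with $|A_{i,j}z_j|=|A_{i,j}|$, and then partitions the column indices $\mathbb{Z}=V_1\cup\dots\cup V_d$ according to which orthonormal basis direction $y_k$ captures a $1/\sqrt{d}$-fraction of $|A_{i,j}|$. It tests $A$ with a separate sequence supported on each class $V_k$ (after flipping signs of the $z_j$), obtaining $||A||_\infty\geq \frac{1}{\sqrt{d}}\sum_{j\in V_k}|A_{i,j}|$, and sums over $k$. That gives exactly $1/(d\sqrt{d})$. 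You instead use $d$ full-support test sequences $\mathbf{x}^{(l)}$, one per output coordinate $l$, with the signs of the scalars $\mathrm{sign}(v_j^{(l)})$ chosen to align the $l$-th coordinate of every summand; you then recover the full block norm via $\sum_l |v_j^{(l)}|=\|v_j\|_1\geq \|v_j\|_2=|A_{i_0,j}|$. This trades the paper's wasteful step (each $j$ contributes to only one test vector, and only one coordinate of the output is examined) for a tighter accounting, and the result is the sharper bound $||A||_\infty\geq \frac{1}{d}\sup_i\sum_j|A_{i,j}|$, which of course implies the stated inequality. The absolute-convergence points you flag are handled correctly: the matrix-representation hypothesis guarantees $\sum_j|A_{i,j}x_j|<\infty$ for every bounded $\mathbf{x}$, and taking $x_j=u_j$ gives $\sum_j|A_{i,j}|<\infty$ directly, as you say.
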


\begin{proof}
Say $y_{1},...,y_{d}$ is the orthonormal basis of ${{{{\mathbb{R}}}}}^{d}$.
Then it is easy to show that for each vector $x\in {{{{\mathbb{R}}}}}^{d}$,
there is some $1\leq k\leq d$ such that $|(x,y_{k})|\geq |x|/\sqrt{d}$. Fix $%
i\in {{{{\mathbb{Z}}}}}$, and find vectors $z_{j}\in {{{{\mathbb{R}}}}}^{d},$
$j\in {{{{\mathbb{Z}}}}}$, $|z_{j}|=1$ such that $|A_{i,j}z_{j}|=|A_{i,j}|$.
We can now construct a partition of ${{{{\mathbb{Z}}}}}=V_{1}\cup ...\cup
V_{d}\,$\ such that for each $j\in V_{k}$, $|(A_{i,j}z_{j},y_{k})|\geq
|A_{i,j}|/\sqrt{d}$. Without loss of generality we can choose the sign of $%
z_{j}$ so that for each $j\in V_{k}$,%
\begin{equation*}
(A_{i,j}z_{j},y_{k})\geq |A_{i,j}|/\sqrt{d}.
\end{equation*}%
Now for non-empty $V_{k}$ we define $\boldsymbol{x}\in l_{\infty }({{{{%
\mathbb{R}}}}}^{d})$ with $x_{j}=z_{j}$ for $j\in V_{k}$, $x_{j}=0$
otherwise (and then $||\boldsymbol{x||}_{\infty }=1$). We calculate:%
\begin{eqnarray*}
||A||_{\infty } &\geq &|(A\boldsymbol{x})_{i}|=\left\vert \sum_{j\in
V_{k}}A_{i,j}z_{j}\right\vert \geq \left\vert \sum_{j\in
V_{k}}(A_{i,j}z_{j},y_{k})\right\vert = \\
&=&\sum_{j\in V_{k}}(A_{i,j}z_{j},y_{k})\geq \frac{1}{\sqrt{d}}\sum_{j\in
V_{k}}|A_{i,j}|\text{.}
\end{eqnarray*}%
We get the claim by summing that over all $k=1,...,d$.
\end{proof}

In cases interesting in this paper, one can show that the inverse of an
operator with a matrix representation also has a matrix representation:

\begin{lemma}
\label{l:matrix}Assume $A$ is a bounded quasi-diagonal operator on $%
l_{\infty }(\boldsymbol{R}^{d})$ represented with (\ref{r:quasi}) and with a
bounded inverse $B$. Then $B$ has a matrix representation $B_{i,j}$ such
that 
\begin{equation}
|B_{i,j}|\leq c\lambda ^{|j-i|}{\text{,}}  \label{r:exp}
\end{equation}%
where $c=a\lambda /(\lambda -ab(1-\lambda ))$, $a=\sup_{i}|A_{i}|$, $%
b=||B||_{\infty }$ and $\lambda $ any real number such that $%
ab/(ab+1)<\lambda <1$.
\end{lemma}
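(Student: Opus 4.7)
The approach is to establish exponential off-diagonal decay of the (to-be-constructed) matrix elements $B_{i,j}$ via a weighted-$l_\infty$ conjugation argument, and then bootstrap this to obtain a genuine matrix representation of $B$. For each $j\in\mathbb{Z}$ and $v\in\mathbb{R}^d$, let $\mathbf{e}_j^v\in l_\infty(\mathbb{R}^d)$ denote the sequence with $v$ in position $j$ and $0$ elsewhere, and define $B_{i,j}v:=(B\mathbf{e}_j^v)_i$; the main content of the lemma is then the decay bound (\ref{r:exp}). Once this is in hand, absolute convergence of $\sum_j B_{i,j}y_j$ follows from geometric summation, and identifying this sum with $(B\mathbf{y})_i$ for general $\mathbf{y}\in l_\infty(\mathbb{R}^d)$ is a uniqueness argument: both $T_i:=(B\mathbf{y})_i$ and $S_i:=\sum_j B_{i,j}y_j$ are bounded sequences satisfying $A\mathbf{z}=\mathbf{y}$ (for $\mathbf{S}$, via the recursion $B_{i,j}-A_{i-1}B_{i-1,j}=\delta_{i,j}I$ obtained from applying $AB$ to $\mathbf{e}_j^v$), and $A$ has trivial kernel on $l_\infty(\mathbb{R}^d)$ because it has a bounded inverse.

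To prove the decay, I would conjugate $A$ to a new operator on $l_\infty(\mathbb{R}^d)$ using the weight $w_k:=\lambda^{-|k-j|}$ centered at the column index $j$. Let $D_w$ be the isometric multiplication operator $(D_w\boldsymbol{\xi})_k=\lambda^{|k-j|}\xi_k$; a direct computation shows that $\widetilde A:=D_w^{-1}AD_w$ has the quasi-diagonal form
$$(\widetilde A\boldsymbol{\xi})_i=\xi_i-\epsilon_i A_{i-1}\xi_{i-1},\qquad \epsilon_i=\begin{cases}\lambda,&i\leq j,\\ \lambda^{-1},&i>j,\end{cases}$$
so the perturbation $R:=\widetilde A-A$ is given by $(R\boldsymbol{\xi})_i=(1-\epsilon_i)A_{i-1}\xi_{i-1}$. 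Since $|1-\epsilon_i|\leq (1-\lambda)/\lambda$ (using $\lambda<1$), one obtains $\|R\|_\infty\leq a(1-\lambda)/\lambda$.

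Now factor $\widetilde A=A(I+BR)$. The hypothesis $\lambda>ab/(ab+1)$ is exactly equivalent to $\|BR\|_\infty\leq ab(1-\lambda)/\lambda<1$, so the Neumann series produces $(I+BR)^{-1}$ with norm at most $\lambda/(\lambda-ab(1-\lambda))$, and hence $\widetilde A^{-1}=(I+BR)^{-1}B$ exists and is bounded by the constant $c$ of the statement. Since $w_j=1$, one has $D_w\mathbf{e}_j^v=\mathbf{e}_j^v$, so $\widetilde A^{-1}\mathbf{e}_j^v=D_w^{-1}B\mathbf{e}_j^v$; this unwinds to $\sup_k\lambda^{-|k-j|}|(B\mathbf{e}_j^v)_k|\leq c|v|$, which is precisely (\ref{r:exp}). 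The main obstacle is conceptual rather than calculational: one has to recognise that off-diagonal decay of $B_{i,j}$ is really a statement about the spectrum of $A$ on a weighted space, and that the threshold $\lambda>ab/(ab+1)$ corresponds to the Neumann perturbation of $A$ still converging. Beyond that, the only bookkeeping that demands care is the handling of the two possible values of $\epsilon_i$ across the transition at $i=j$ when bounding $\|R\|_\infty$.
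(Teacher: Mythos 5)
Your argument follows essentially the same route as the paper: both perturb $A$ near the column index $j$ (the paper writes down the operator $A(z)$ directly, you obtain the identical operator $\widetilde A$ as a conjugate of $A$ by a geometric weight), both invert the perturbation by a Neumann series under the hypothesis $\lambda>ab/(ab+1)$, and both recover the exponential decay of $B_{i,j}$ by comparing with the unperturbed inverse. Two points deserve care.

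First, $D_w$ is not an isometry on $l_\infty$ (it is a strict contraction off the $j$-th coordinate), and its formal inverse $D_w^{-1}$ is unbounded on $l_\infty$. Consequently the identity $\widetilde A^{-1}=D_w^{-1}BD_w$ is not an identity between bounded operators and cannot simply be quoted; writing $\widetilde A^{-1}\mathbf e_j^v=D_w^{-1}B\mathbf e_j^v$ is exactly the assertion you are trying to prove, namely that $B\mathbf e_j^v$ lies in the range of $D_w$. The correct direction is to observe that $AD_w=D_w\widetilde A$ holds as an identity between bounded operators on $l_\infty$, apply both sides to $\widetilde A^{-1}\mathbf e_j^v\in l_\infty$ to get $A\bigl(D_w\widetilde A^{-1}\mathbf e_j^v\bigr)=D_w\mathbf e_j^v=\mathbf e_j^v$, and then invoke injectivity of $A$ to conclude $D_w\widetilde A^{-1}\mathbf e_j^v=B\mathbf e_j^v$. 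This is the same uniqueness step that appears in the paper (``it is easy to deduce $\ldots$ that $w_i^\ast=z^{|i-j|}w_i$'') and in the proof of Proposition \ref{p:uh2}; since $D_w$ rather than $D_w^{-1}$ is applied, no unbounded operator is involved, and the decay $|(B\mathbf e_j^v)_k|\le \lambda^{|k-j|}\|\widetilde A^{-1}\|_\infty\,|v|$ follows.

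Second, your Neumann-series estimate gives $\|\widetilde A^{-1}\|_\infty\le b\lambda/(\lambda-ab(1-\lambda))$, whereas the constant stated in the lemma is $c=a\lambda/(\lambda-ab(1-\lambda))$; your bound is the one that actually comes out of the standard perturbation inequality $\|(A+E)^{-1}\|\le \|A^{-1}\|/(1-\|A^{-1}\|\,\|E\|)$, and the paper's intermediate bound $\|A(z)^{-1}\|\le 1/(a^{-1}-b(z-1))$ appears to have $a$ and $b$ interchanged. You should state your constant explicitly rather than claim it equals the $c$ in the statement, or flag the discrepancy.
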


\begin{proof}
We fix $j\in {{{{\mathbb{Z}}}}}$ for now and choose any $\theta \in {{{{%
\mathbb{R}}}}}^{d}$. We define $\boldsymbol{y}$ as $y_{j}=\theta $, $y_{k}=0$
for $k\not=j$ and let $\boldsymbol{w}=B\boldsymbol{y}$, thus $\boldsymbol{y}%
=A\boldsymbol{w}$. We define linear operators $B_{i,j}$ with $B_{i,j}(\theta
)=w_{i}$ for any $\theta \in {{{{\mathbb{R}}}}}^{d}$ and $w_{i}$ defined as
above. As $B$ is linear and $B\boldsymbol{y}=\boldsymbol{w}$, so is $B_{i,j}$%
.

We now perturb linear operators $A$ around the $j$-th index, and define a
linear operator $A(z)$ for a real parameter $z\geq 1$ with 
\begin{equation*}
(A(z)\boldsymbol{x})_{i}=\QATOPD\{ . {-(1/z)A_{i-1}x_{i-1}+x_{i}\text{, }%
i\leq j}{-zA_{i-1}x_{i-1}+x_{i}\text{, }i\geq j+1.}
\end{equation*}%
Now clearly $A(1)=A$ and $||A(z)-A||_{\infty }\leq a(z-1)$. As $%
||A^{-1}||_{\infty }=b$, $A(z)$ is invertible for $z<1+1/ab$ and 
\begin{equation}
||A(z)^{-1}||\leq 1/(a^{-1}-b(z-1)){\text{.}}  \label{r:norm2}
\end{equation}%
Let $\boldsymbol{w}^{\ast }=A(z)^{-1}\boldsymbol{y}$. Then (\ref{r:norm2})
and the definition of $\boldsymbol{y}$ imply that%
\begin{equation*}
|w_{i}^{\ast }|\leq |\theta |/(a^{-1}-b(z-1)).
\end{equation*}

However it is easy to deduce from the definitions of $A(z),A$ (as in the
proof of Proposition \ref{p:uh2}) that $w_{i}^{\ast }=z^{|i-j|}w_{i}$, thus%
\begin{equation}
|w_{i}|\leq |\theta |z^{-|i-j|}/(a^{-1}-b(z-1)).  \label{r:norm3}
\end{equation}%
By setting $\lambda =1/z$ one gets the required bound on $|B_{i,j}|$ from
the right-hand side of (\ref{r:norm3}) and the definition of $B_{i,j}$.

If we now define an operator $B^{\ast }$ with $(B^{\ast }\boldsymbol{x}%
)_{i}=\sum_{j}B_{i,j}x_{j}$, then the series converges absolutely for any $%
\boldsymbol{x}\in l_{\infty }({{{{\mathbb{R}}}}}^{d})$. By calculating one
checks that $B^{\ast }$ is the inverse of $A$, so $B^{\ast }=B$ and $%
(B_{i,j})$ is its matrix representation.
\end{proof}

Now say $X_{n}$ are the Banach spaces defined in the introduction with the
norm $||\boldsymbol{x|}|_{n}=\sup_{k}\func{exp}(-|k|/n)|x_{k}|$. By a simple
isomorphism argument one gets that all the results above hold if we replace $%
A_{i,j}$ with 
\begin{equation}
\func{exp}((|j|-|i|)/n)A_{i,j}{\text{.}}  \label{r:rel3}
\end{equation}%
In particular (\ref{r:exp}) in $X_{n}$ becomes 
\begin{equation}
|B_{i,j}|\leq c\lambda ^{|j-i|}\func{exp}((|i|-|j|)/n){\text{.}}
\label{r:exp2}
\end{equation}

Furthermore, if $A$ is a linear operator on $X_{n}$ with the matrix
representation $A_{i,j}\in L({{{{\mathbb{R}}}}}^{d})$, then (\ref{r:rel})
and (\ref{r:rel2})\ imply that 
\begin{equation}
||A||_{n}\leq \sup_{i}\exp (-|i|/n)\sum_{j}\exp (|j|/n)|A_{i,j}|
\label{r:40}
\end{equation}%
and for all integers $i$,%
\begin{equation}
\exp (-|i|/n)\sum_{j}\exp (|j|/n)|A_{i,j}|\leq ||A||_{n}d\sqrt{d}\text{.}
\label{r:44}
\end{equation}

Similarly, if $A$ is a linear operator $A:X_{n}\rightarrow X_{m}$, then its
norm (denoted by $||.||_{n,m}$) can be bounded with%
\begin{equation}
||A||_{n,m}\leq \sup_{i}\exp (-|i|/m)\sum_{j}\exp (|j|/n)|A_{i,j}|.
\label{r:50}
\end{equation}

\end{document}